\documentclass[11pt]{article}
\usepackage[numbers,square]{natbib}
\usepackage{amsmath}
\usepackage{amsthm}
\usepackage{amsfonts}
\usepackage{amssymb}
\usepackage{siunitx}
\usepackage{commath}
\usepackage{graphicx}
\usepackage{xspace}
\usepackage{color}
\usepackage[lofdepth,lotdepth]{subfig}
\usepackage{stmaryrd}
\usepackage{url}
\usepackage{amsthm}
\usepackage{footnote}
\makesavenoteenv{tabular}
\makesavenoteenv{table}
\usepackage[hidelinks]{hyperref}
\hypersetup{colorlinks = true, urlcolor = blue,
  linkcolor = blue, citecolor = blue}
\usepackage{cleveref}

\usepackage[margin=0.7in]{geometry}
\makeatletter
\newcommand{\tnorm}{\@ifstar\@tnorms\@tnorm}
\newcommand{\@tnorms}[1]{%
  \left|\mkern-1.5mu\left|\mkern-1.5mu\left|
   #1
  \right|\mkern-1.5mu\right|\mkern-1.5mu\right|
}
\newcommand{\@tnorm}[2][]{%
  \mathopen{#1|\mkern-1.5mu#1|\mkern-1.5mu#1|}
  #2
  \mathclose{#1|\mkern-1.5mu#1|\mkern-1.5mu#1|}
}
\makeatother

\newcommand{\jump}[1]{\llbracket #1 \rrbracket}
\newcommand{\av}[1]{\{\!\!\{#1\}\!\!\}}
\newtheorem{theorem}{Theorem}
\newtheorem{lemma}{Lemma}
\newtheorem{corollary}{Corollary}
\newtheorem{proposition}{Proposition}
\newtheorem{remark}{Remark}
\numberwithin{equation}{section}
\title{A hybridizable discontinuous Galerkin method for the coupled
  Navier--Stokes/Biot problem}
\author{
  A. Cesmelioglu \thanks{Department of Mathematics and Statistics,
    Oakland University, MI, USA, (cesmelio@oakland.edu),
    \url{https://orcid.org/0000-0001-8057-6349} }
  \and
  J. J. Lee \thanks{Department of Mathematics, Baylor University, TX,
    USA, (jeonghun\_lee@baylor.edu),
    \url{https://orcid.org/0000-0001-5201-8526} }
  \and
  S. Rhebergen\thanks{Department of Applied Mathematics, University of
    Waterloo, ON, Canada (srheberg@uwaterloo.ca),
    \url{http://orcid.org/0000-0001-6036-0356} } }
\begin{document}
\maketitle
\begin{abstract}
  In this paper we present a hybridizable discontinuous Galerkin
  method for the time-dependent Navier--Stokes equations coupled to
  the quasi-static poroelasticity equations via interface
  conditions. We determine a bound on the data that guarantees
  stability and well-posedness of the fully discrete problem and prove
  a priori error estimates. A numerical example confirms our analysis.  
\end{abstract}
\section{Introduction}
\label{sec:introduction}

In this paper we consider a system of partial differential equations
such that the governing equations of two different physical models on
two disjoint subdomains are coupled across an interface. The two
models are the time-dependent Navier--Stokes equations of
incompressible fluids and the quasi-static poroelasticity (or Biot)
equations \cite{Biot:1957,Biot:1955,Biot:1941}. The interface
conditions coupling the two governing equations are derived by
fundamental physical laws and experimental data. This fluid and
poroelastic structure interaction problem, which we refer to here as
the coupled Navier--Stokes/Biot problem, has applications in
engineering fields such as hydrogeology, petroleum engineering, and
biomechanics.

To the best of our knowledge, the coupled Stokes/Biot model with
general interface conditions was first proposed in
\cite{Showalter:2005}. Soon after, Badia et al. \cite{Badia:2009}
studied conforming finite element methods for the spatial
discretization of the coupled Navier--Stokes/Biot problem and
monolithic and domain decomposition (partitioned) algorithms to solve
the fully discrete problem. A mathematical proof of existence and
uniqueness of weak solutions to the coupled Navier--Stokes/Biot
problem, under a small data assumption, was given in
\cite{Cesmelioglu:2017a}.

Various finite element methods have been studied for the coupled
Stokes/Biot and Navier--Stokes/Biot problems. A Lagrange multiplier
method for the coupled stationary Stokes and quasi-static
poroelasticity equations was studied in \cite{Ambartsumyan:2018},
which was extended to a nonlinear model with non-Newtonian fluids in
\cite{Ambartsumyan:2019}. A conforming/mixed finite element method was
studied for the coupled stationary Stokes and quasi-static
poroelasticity equations in \cite{Baier:2022,Boon:2022} using the
total pressure formulation \cite{Oyarzua:2016,Lee:2017}. Other
formulations of this system of equations have also been studied. These
include the velocity-pressure (for Stokes) and
stress-displacement-velocity-pressure (for poroelasticity) formulation
\cite{Li:2022} and the stress-velocity-pressure (for Stokes) and
stress-displacement-velocity-pressure (for poroelasticity) formulation
\cite{Caucao:2022}. A conforming finite element method using Nitsche's
technique for the Stokes/Biot problem is studied in
\cite{Ge:2023}. They consider the velocity-pressure formulation of the
Stokes equations and a formulation using displacement, total pressure,
and fluid content as primary variables for the poroelasticity
equations. For the coupled stationary Navier--Stokes and quasi-static
poroelasticity equations, an augmented mixed method using a
pseudo-stress formulation of the Navier--Stokes equations and a
stress-displacement-velocity-pressure formulation of the
poroelasticity model is studied in \cite{Li:2023}. A conforming finite
element method with stabilization for the time-dependent Stokes
equations coupled to the dynamic poroelasticity equations was studied
in \cite{Cesmelioglu:2020a}. Furthermore, many partitioned time
discretization schemes for efficient time discretization of the
(Navier--)Stokes/Biot model have been studied, see, for example,
\cite{Bukac:2015,Bukac:2016,Oyekole:2020,Bergkamp:2020,Guo:2022}.

In our previous work \cite{Cesmelioglu:2023a}, we presented a
locking-free hybridizable discontinuous Galerkin (HDG)
\cite{Cockburn:2009a} method for the coupled stationary Stokes
equations and quasi-static poroelasticity equations. This HDG method
was constructed such that: (i) the discrete velocities and
displacement are divergence-conforming; (ii) the compressibility
equations are satisfied pointwise on the elements; and (iii) mass is
conserved pointwise on the elements for the semi-discrete problem in
the absence of source/sink terms. In this paper we expand on our work
in \cite{Cesmelioglu:2023a} and propose and analyze an HDG method for
the coupled time-dependent Navier--Stokes and quasi-static Biot
equations that inherits the three aforementioned properties of the HDG
method for the coupled Stokes/Biot problem. For this, we couple the
exactly divergence-free HDG method for the time-dependent
Navier--Stokes equations of \cite{Rhebergen:2018a} to the locking-free
HDG method for the Biot equations of \cite{Cesmelioglu:2022a}.

We consider Backward Euler time-stepping for the time discretization,
lagging the convective velocity in the nonlinear term of the
Navier--Stokes equations. To prove stability and well-posedness of our
discretization, the convective velocity across the interface must be
small enough (this observation was also made in
\cite{Discacciati:2017} for the stationary Navier--Stokes/Darcy
problem). We show this by assuming the data is small and extending the
stability and well-posedness analysis of \cite{Chaabane:2017} for a
discontinuous Galerkin discretization of the Navier--Stokes/Darcy
problem to our HDG discretization of the Navier--Stokes/Biot
problem. With well-posedness established we proceed with an a priori
error analysis. Here it is interesting to remark that all error bounds
are independent of the fluid pressure analogous to pressure-robust
estimates found elsewhere in the literature for divergence-conforming
discretizations of incompressible flows (see, for example,
\cite{Cockburn:2007b,Kanschat:2010,Lehrenfeld:2016,Rhebergen:2020,Schroeder:2018,Wang:2007}
and the review paper \cite{John:2017}).

The remainder of this paper is organized as follows. In
\cref{sec:navstobiot}, we present the coupled Navier--Stokes/Biot
problem. Notation, definitions, useful preliminary results, and the
HDG method are introduced in \cref{sec:hdgmethod}. The HDG method is
shown to be stable and well-posed in \cref{sec:wp} and a priori error
estimates are proven in \cref{sec:erroranalysis}. A numerical result
is presented in \cref{sec:numexample} and we end this paper with
concluding remarks in \cref{sec:conclusions}.

\section{The coupled Navier--Stokes/Biot problem}
\label{sec:navstobiot}

Let $\Omega$ be a bounded connected open subset of $\mathbb{R}^d$,
$d=2,3$, with polygonal/polyhedral boundaries, and let $\Omega^f$ and
$\Omega^b$ be two disjoint open connected subsets, both with
polygonal/polyhedral boundaries, such that
$\overline{\Omega} = \overline{\Omega^f} \cup \overline{\Omega^b}$.
Furthermore, let $J=(0,T]$ denote the time interval of interest.

Let $\sigma^j := p^j\mathbb{I} - 2\mu^j\varepsilon(u^j)$ ($j=f,b$) and
$\varepsilon(u) := \del[0]{\nabla u + (\nabla u)^T}/2$. The
Navier--Stokes equations in $\Omega^f \times J$ are given by
\begin{equation}
  \label{eq:navierstokes}
  \partial_t u^f + \nabla \cdot (u^f \otimes u^f) + \nabla \cdot \sigma^f = f^f,
  \qquad
  \nabla \cdot u^f = 0,
\end{equation}
where $u^f$ is the velocity in $\Omega^f$ and $p^f$ is the pressure in
$\Omega^f$. Furthermore, $\mu^f>0$ is the fluid viscosity and $f^f$ is
a given body force term. Biot's equations in $\Omega^b \times J$ are
given by
\begin{equation}
  \label{eq:biot}
  \begin{aligned}
    -\nabla \cdot u^b + \lambda^{-1}(\alpha p^p - p^b) &= 0,
    &
    \nabla \cdot \sigma^b &= f^b,
    &
    \mu^f\kappa^{-1} z + \nabla p^p
    &= 0,    
    \\
    c_0 \partial_t p^p + \alpha \lambda^{-1} (\alpha \partial_t p^p - \partial_t p^b) + \nabla \cdot z
    &= g^b,
    &&&
  \end{aligned}
\end{equation}
where $u^b$ is the displacement,
$p^b:=\alpha p -\lambda \nabla \cdot u$ is the total pressure, $p^p$
is the pore pressure, and $z$ is the Darcy velocity. Furthermore,
$\mu^b$ and $\lambda$ are the Lam\'e constants, $\kappa > 0$ is the
permeability constant, $\alpha \in (0,1)$ is the Biot--Willis
constant, and $c_0\ge 0$ is the specific storage coefficient. The body
force in $\Omega^b$ is denoted by $f^b$ while $g^b$ is a source/sink
term.

Various equations are prescribed on the interface
$\Gamma_I = \overline{\partial\Omega}^f \cap
\overline{\partial\Omega}^b$ that couple the Navier--Stokes and Biot
equations. First, mass conservation across the interface is prescribed
by
\begin{subequations}
  \label{eq:interface}
  \begin{equation}
    \label{eq:I_u}
    u^f\cdot n = \del[1]{\partial_t u^b + z}\cdot n \text{ on } \Gamma_I \times J.
  \end{equation}
  Here we use the convention that $n^j$, $j=f,b$ is the unit outward
  normal to $\Omega^j$ and that on $\Gamma_I$, $n = n^f = -n^b$.
  Next, the balance of stresses is prescribed by
  \begin{equation}
    \label{eq:I_momentum}
    \sigma^fn = \sigma^bn,
    \qquad
    (\sigma^f n)\cdot n = p^p \text{ on } \Gamma_I \times J.
  \end{equation}
  The Beavers--Joseph--Saffmann condition
  \cite{Beavers:1967,Saffman:1971} prescribes slip with friction and
  is given by
  \begin{equation}
    \label{eq:I_bjs}
    -2\mu^f\del[0]{\varepsilon(u^f)n}^t = \gamma \mu^f\kappa^{-1/2}(u^f-\partial_t u^b)^t
    \text{ on } \Gamma_I \times J,    
  \end{equation}
  where $\gamma > 0$ is an experimentally determined dimensionless
  constant and where $(w)^t:= w - (w\cdot n)n$.
\end{subequations}

The boundary of the domain is partitioned as follows. On each
subdomain we define $\Gamma^j = \partial\Omega \cap \partial\Omega^j$,
$j=f,b$. We then partition both $\Gamma^f$ and $\Gamma^b$ into
Dirichlet $\Gamma_D^j$ and Neumann $\Gamma_N^j$ parts such that
$\Gamma^j = \Gamma_D^j \cup \Gamma_N^j$. Note that
$\Gamma^j_D \cap \Gamma^j_N = \emptyset$ and we will assume that
$|\Gamma^j_D|,|\Gamma^j_N|>0$. It will also be useful to define
$\Gamma_{IN}^f := \Gamma_I \cup \Gamma_N^f$. A second partitioning of
$\Gamma^b$ is defined as $\Gamma^b = \Gamma^b_P \cup \Gamma^b_F$ with
$\Gamma^b_P \cap \Gamma^b_F = \emptyset$ and $|\Gamma^b_P|>0$. We now
impose the following boundary conditions:
\begin{equation}
  \label{eq:bcs}
  \begin{aligned}
    u^j &= 0 && \text{on } \Gamma^j_D\times J, \quad j=f,b,
    &
    \sigma^j n &= 0 &&  \text{on } \Gamma^j_N\times J, \quad j=f,b,
    \\    
    p^p &= 0 && \text{on } \Gamma^b_P\times J,
    &
    z \cdot n &= 0 && \text{on } \Gamma^b_F\times J.    
  \end{aligned}
\end{equation}
Initial conditions are given by:
\begin{equation}
  \label{eq:ics}
  u^j(x,0) = u^j_0(x) \text{ in } \Omega^j, \quad j=f,b,
  \qquad
  p^p(x,0) = p_0^p(x) \text{ in } \Omega^b.  
\end{equation}
Note that $p^b(x,0) = \alpha p_0^p(x) - \lambda \nabla \cdot u^b_0(x)$
and that $z(x,0) = -(\kappa/\mu^f)\nabla p_0^p(x)$. Let us write
$u|_{\Omega^j} = u^j$, $p|_{\Omega^j}=p^j$, $f|_{\Omega^j}=f^j$, and
$\mu|_{\Omega^j}=\mu^j$ for $j=f,b$ so that $u$, $p$, $f$, and $\mu$
are defined on the whole domain $\Omega$. 

\section{Notation, the HDG method, and preliminary results}
\label{sec:hdgmethod}

\subsection{Mesh and time partitioning}
\label{ss:mesh}

We discretize the domains $\Omega^j$, $j=f,b$, by shape-regular
triangulations which we denote by $\mathcal{T}^j$. We will assume that
the triangulations consist of simplices, denoted by $K$, that match at
the interface and that the triangulations are free of hanging
nodes. The set of all simplices is denoted by
$\mathcal{T} := \mathcal{T}^f \cup \mathcal{T}^b$. The boundary of an
element $K$ is denoted by $\partial K$ and we define
$\partial \mathcal{T}^j := \cbr[0]{\partial K\, : \, K \in
  \mathcal{T}^j}$ and
$\partial \mathcal{T} := \cbr[0]{\partial K\, : \, K \in
  \mathcal{T}}$. We also consider various sets of facets. The sets of
all facets in $\overline{\Omega}^j$ and $\Omega^j$ are denoted by
$\mathcal{F}^j$ and $\mathcal{F}^j_{int}$, respectively. The sets of
facets on the Dirichlet $\Gamma_D^j$ and Neumann $\Gamma_N^j$
boundaries are denoted by, respectively, $\mathcal{F}_D^j$ and
$\mathcal{F}_N^j$, while on $\Gamma_F^b$ and $\Gamma_P^b$ we denote
the sets of facets by, respectively, $\mathcal{F}_F^b$ and
$\mathcal{F}_P^b$. The set of facets on the interface is denoted by
$\mathcal{F}_I$ and the set of all facets is denoted by
$\mathcal{F}$. The union of facets in $\mathcal{F}^j$ is denoted by
$\Gamma_0^j$. Furthermore, we denote by $\mathcal{F}(K)$ the set of
all facets of $K$. The diameter of an element $K$ is denoted by $h_K$
and we define $h := \max_{K \in \mathcal{T}}h_K$. On the boundary of
an element we denote by $n_K$ the unit outward normal vector, however,
we will drop the subscript $K$ where no confusion can occur.

The time interval $J$ is partitioned as follows:
$0=t_0 < t_1 < \cdots < t_N=T$. For simplicity, we assume a fixed time
step, i.e., $\Delta t = T/N = t_n-t_{n-1}$ for $n=1,\cdots, N$. A
function $f$ evaluated at time level $n$ will be denoted by
$f^n := f(t_n)$. We further introduce the difference
$\delta f^{n+1} := f^{n+1}-f^n$, the first order time derivative
$d_tf^{n+1} := (f^{n+1} - f^n)/\Delta t$ for $n=0,\cdots,N-1$, and the
second order time derivative
$d_{tt}f^{b,n+1} = (f^{b,n+1} - 2f^{b,n} + f^{b,n-1})/(\Delta t)^2$
for $n=1,\cdots,N-1$.

\subsection{Function spaces and norms}
\label{ss:fs-norms}

Various function spaces will be used throughout this paper. First, the
usual Sobolev spaces are denoted by $W^{k,p}(D)$ for $k\ge 0$ and
$1 \le p \le \infty$ on a Lipschitz domain $D \subset
\mathbb{R}^d$. The norm on $W^{k,p}(D)$ is denoted by
$\norm[0]{\cdot}_{p,k,D}$. As usual, $H^k(D) = W^{k,2}(D)$ with norm
$\norm[0]{\cdot}_{k,D} = \norm[0]{\cdot}_{2,k,D}$ and
$L^p(D) = W^{0,p}(D)$ with norm
$\norm[0]{\cdot}_{D}=\norm[0]{\cdot}_{p,0,D}$. The $L^p(S)$ norm on a
surface $S \subset \mathbb{R}^{d-1}$ is defined similarly.

Let $X$ be a Banach space with norm $\norm[0]{\cdot}_X$, then
$W^{k,p}(J;X)$ denotes a Bochner space with norm
$\norm[0]{f}_{W^{k,p}(J;X)}^p := \int_0^T
\sum_{i=0}^k\norm[0]{\partial_t^if(t)}_X^p \dif t$ for
$1 \le p < \infty$. If $k=0$, then $L^p(J;X) = W^{0,p}(J;X)$. The norm
on $L^{\infty}(J;X)$, i.e., the Bochner space for $k=0$ and
$p=\infty$, is defined as
$\norm[0]{f}_{L^{\infty}(J;X)} := \text{ess sup}_{t \in J}
\norm[0]{f(t)}_X$. Furthermore, we denote by $\ell^{p}(J;X)$ the space
equipped with the norm
$\norm[0]{f}_{\ell^{\infty}(J;X)} := \max_{1\le i \le N}
\norm[0]{f^i}_X$ for $p=\infty$ and
$\norm[0]{f}_{\ell^p(J;X)}^p := \Delta t
\sum_{i=1}^N\norm[0]{f^i}_X^p$ for $1 \le p < \infty$.

Let us now define the following function spaces (where $j=f,b$):
\begin{equation*}
  V^j := \cbr[1]{v \in [H^2(\Omega^j)]^{d}\ :\ v|_{\Gamma_D^j} = 0},
  \quad
  Q^j := H^1(\Omega^j),
  \quad
  Q^{b0} := \cbr[1]{ q \in H^2(\Omega^b) \ : \ q|_{\Gamma_P^b} = 0 },  
\end{equation*}
and denote by $\bar{V}^j$ the trace space of $V^j$ restricted to
$\Gamma_0^j$, $\bar{Q}^j$ is the trace space of $Q^j$ restricted to
$\Gamma_0^j$, and $\bar{Q}^{b0}$ is the trace space of $Q^{b0}$
restricted to $\Gamma_0^b$. For a compact notation, we define
$\boldsymbol{V}^j := V^j \times \bar{V}^j$,
$\boldsymbol{Q}^j := Q^j \times \bar{Q}^j$, and
$\boldsymbol{Q}^{b0} := Q^{b0} \times \bar{Q}^{b0}$. Furthermore, we
define
\begin{equation*}
  Z := \cbr[1]{ v \in [H^1(\Omega^b)]^{d}\ :\ v \cdot n|_{\Gamma_F^b} = 0}.
\end{equation*}

To define the HDG method we require the following element and facet
function space pairs on each domain $\Omega^j$, $j=f,b$:
\begin{align*}
  V_h^j
  &:= \cbr[1]{v_h\in \sbr[0]{L^2(\Omega^j)}^d
    : \ v_h \in \sbr[0]{P_k(K)}^d, \ \forall\ K\in\mathcal{T}^j},
  \\
  \bar{V}_h^j
  &:= \cbr[1]{\bar{v}_h \in \sbr[0]{L^2(\Gamma_0^j)}^d:\ \bar{v}_h \in
    \sbr[0]{P_{k}(F)}^d\ \forall\ F \in \mathcal{F}^j,\ \bar{v}_h
    = 0 \text{ on } \Gamma_D^j},
\end{align*}  
and
\begin{equation*}
  Q_h^j
  := \cbr[1]{q_h\in L^2(\Omega^j) : \ q_h \in P_{k-1}(K) ,\
    \forall \ K \in \mathcal{T}^j},
  \quad
  \bar{Q}_h^j
  := \cbr[1]{\bar{q}_h \in L^2(\Gamma_0^j) : \ \bar{q}_h \in
    P_{k}(F) \ \forall\ F \in \mathcal{F}^j},
\end{equation*}
where $P_r(D)$ denotes the set of polynomials of total degree at most
$r \ge 0$ defined on $D$. We will furthermore require:
\begin{align*}
  V_h
  &:= \cbr[1]{v_h \in \sbr[0]{L^2(\Omega)}^d : \ v_h \in \sbr[0]{P_k(K)}^d,\ \forall K \in \mathcal{T}},
  &
  Q_h 
  &:= \cbr[1]{q_h\in L^2(\Omega) : \ q_h \in P_{k-1}(K) ,\
    \forall \ K \in \mathcal{T}},
  \\
  \bar{Q}_h^{b0}
  &:=\cbr[0]{\bar{q}_h \in \bar{Q}_h^b:
    \bar{q}_h=0 \text{ on } \Gamma_P^b}.
  &&
\end{align*}
For $(u_h, p_h) \in V_h \times Q_h$, we will write
$u_h|_{\Omega^j} = u_h^j \in V_h^j$ and
$p_h|_{\Omega^j} = p_h^j \in Q_h^j$ for $j=f,b$. We group element and
facet unknowns together as follows:
\begin{align*}
  \boldsymbol{v}_h = (v_h, \bar{v}_h^f, \bar{v}_h^b)
  &\in \boldsymbol{V}_h := V_h \times \bar{V}_h^f \times \bar{V}_h^b,
  & \boldsymbol{v}_h^j = (v_h^j, \bar{v}_h^j) \in \boldsymbol{V}_h^j := V_h^j \times \bar{V}_h^j,
  \\
  \boldsymbol{q}_h = (q_h, \bar{q}_h^f, \bar{q}_h^b)
  &\in \boldsymbol{Q}_h := Q_h \times \bar{Q}_h^f \times \bar{Q}_h^b,
  & \boldsymbol{q}_h^j = (q_h^j, \bar{q}_h^j) \in \boldsymbol{Q}_h^j := Q_h^j \times \bar{Q}_h^j,
  \\
  \boldsymbol{q}_h^p = (q_h^p, \bar{q}_h^p) &\in \boldsymbol{Q}_h^{b0} := Q_h^b \times \bar{Q}_h^{b0},
\end{align*}
where $j=f,b$, and
$ (\boldsymbol{v}_h, \boldsymbol{q}_h, w_h, \boldsymbol{q}_h^p) \in
\boldsymbol{X}_h := \boldsymbol{V}_h \times \boldsymbol{Q}_h \times
V_h^b \times \boldsymbol{Q}_h^{b0}$. We will also require the
following two subspaces of $\boldsymbol{V}_h^j$ and
$\boldsymbol{V}_h$, respectively:
\begin{equation}
  \label{eq:Vh-tilde}
  \widetilde{\boldsymbol{V}}_h^j
  := \cbr[0]{\boldsymbol{v}_h \in \boldsymbol{V}_h^j \, : \, \bar{v}_h|_{\Gamma_I} = 0},
  \qquad
  \widehat{\boldsymbol{V}}_h
  := \cbr[0]{\boldsymbol{v}_h \in \boldsymbol{V}_h \, : \, \bar{v}_h^f \cdot n = \bar{v}_h^b \cdot n \text{ on } \Gamma_I }.  
\end{equation}
Extended function spaces are defined as (for $j=f,b$):
\begin{align*}
  V^j(h) &:= V_h^j + V^j, &&
  \boldsymbol{V}^j(h) := \boldsymbol{V}_h^j + \boldsymbol{V}^j, &&
  Z(h) := Z_h + Z,
  \\
  \boldsymbol{Q}^j(h) &:= \boldsymbol{Q}_h^j + \boldsymbol{Q}^j, &&   
  \boldsymbol{Q}^{b0}(h) := \boldsymbol{Q}_h^{b0} + \boldsymbol{Q}^{b0}, &&
\end{align*}
and
\begin{equation*}
  V^{f,\text{div}}(h) := \cbr[0]{ v \in V^f(h) \cap H(\text{div};\Omega^f)\, :\,
    \nabla \cdot v = 0 \text{ for } x \in K,\ \forall K \in
    \mathcal{T}^f}.
\end{equation*}
We will work with the following norms:
\begin{align*}
  \tnorm{\boldsymbol{v}^j}_{v,j}^2
  &:=\sum_{K\in \mathcal{T}^j}\del[1]{\norm[0]{\varepsilon(v^j)}_K^2+h_K^{-1}\norm[0]{v^j-\bar{v}^j}_{\partial K}^2}
  &&\forall \boldsymbol{v}^j\in \boldsymbol{V}^j(h), && j=f,b,
  \\
  \tnorm{\boldsymbol{v}^j}_{v',j}^2
  &:=\tnorm{\boldsymbol{v}^j}_{v,j}^2+\sum_{K\in \mathcal{T}^j} h_K^2 |v^j|_{2,K}^2
  &&\forall \boldsymbol{v}^j\in \boldsymbol{V}^j(h), && j=f,b,
  \\
  \tnorm{\boldsymbol{q}}_{q,j}^2
  &:=\norm[0]{q}_{\Omega^j}^2+\sum_{K \in \mathcal{T}^j}h_K\norm[0]{\bar{q}^j}_{\partial K}^2
  &&\forall \boldsymbol{q} \in \boldsymbol{Q}^j(h), && j=f,b,
  \\
  \tnorm{\boldsymbol{v}_h}_{v}^2
  &:= \tnorm{\boldsymbol{v}_h^f}_{v,f}^2+\tnorm{\boldsymbol{v}_h^b}_{v,b}^2 + \norm[0]{(\bar{v}^f_h - \bar{v}^b_h)^t}_{\Gamma_I}^2
  && \forall \boldsymbol{v}_h \in \boldsymbol{V}_h, &&
  \\
  \norm[0]{v_h}_{1,h,\Omega^f}
  &:= \tnorm{(v_h, \av{v_h})}_{v,f} && \forall v_h \in V_h^f,
  \\
  \tnorm{\boldsymbol{q}_h}_q^2
  &:=\tnorm{\boldsymbol{q}_h^f}_{q,f}^2+\tnorm{\boldsymbol{q}_h^b}_{q,b}^2
  && \forall \boldsymbol{q}_h \in \boldsymbol{Q}_h, &&
  \\
  \norm[0]{q_h}_{1,h,\Omega^b}^2
  &:= \sum_{K \in \mathcal{T}^b}\norm[0]{\nabla q_h}_K^2
    + \sum_{F \in \mathcal{F}_{int}^b \cup \mathcal{F}_P^b} h_F^{-1}\norm[0]{\jump{q_h}}_F^2
  && \forall q_h \in Q_h^b, &&
  \\
  \tnorm{\boldsymbol{q}_h}_{1,h,b}^2
  &:= \sum_{K \in \mathcal{T}^b} \del[1]{\norm[0]{\nabla q_h}_K^2 + h_K^{-1}\norm[0]{q_h - \bar{q}_h}_{\partial K}^2}
  && \forall \boldsymbol{q}_h \in \boldsymbol{Q}_h^{b0}.
\end{align*}
It is useful to remark that for all
$\boldsymbol{v} \in \boldsymbol{V}_h^j$
$\tnorm{\boldsymbol{v}}_{v,j} \le \tnorm{\boldsymbol{v}}_{v',j} \le
c_e \tnorm{\boldsymbol{v}}_{v,j}$, with $c_e>0$ a constant independent
of $h$ (see \cite[eq.~(5.5)]{Wells:2011}). By
\cite[Theorem 4.4]{Girault:2009} or \cite[Theorem~4.4]{Buffa:2009} and
\cite{Brenner:2003}, there exist constants $c_p,c_{si,r}>0$,
independent of $h$, such that
\begin{subequations}
  \begin{align}
    \label{eq:dpoincareineq}
    \norm{v_h}_{\Omega^f}
    &\le c_{p} \norm{v_h}_{1,h,\Omega^f}
      \le c_{p} \tnorm{\boldsymbol{v}_h}_{v,f}
      && \forall \boldsymbol{v}_h \in \boldsymbol{V}_h^f,
    \\
    \label{eq:dpoincareineq-b}
    \norm{v_h}_{\Omega^b}
    &\le c_{p} \norm{v_h}_{1,h,\Omega^b}
      \le c_{p} \tnorm{\boldsymbol{v}_h}_{v,b}
      && \forall \boldsymbol{v}_h \in \boldsymbol{V}_h^b,
    \\
    \label{eq:dtrpoincareineq}
    \norm[0]{v_h^f}_{r,0,\Gamma_{IN}^f}
    &\le c_{si,r} \norm{v_h}_{1,h,\Omega^f}
      \le c_{si,r} \tnorm{\boldsymbol{v}_h}_{v,f}
      && \forall \boldsymbol{v}_h \in \boldsymbol{V}_h^f,  \text{ and for } r \ge 2,
    \\
    \label{eq:qhbp}
    \norm[0]{q_h}_{\Omega^b} &\le c_{pp}\norm[0]{q_h}_{1,h,\Omega^b} \le c_{pp}\tnorm{\boldsymbol{q}_h}_{1,h,b}
    && \forall \boldsymbol{q}_h \in \boldsymbol{Q}_h^{b0}.
  \end{align}  
\end{subequations}
Consider two scalar functions $w$ and $z$. We will denote by
$(w, z)_D$ the integral of $wz$ over a domain $D \subset \mathbb{R}^d$
and by $\langle w, z \rangle_D$ the integral of $wz$ over a domain
$D \subset \mathbb{R}^{d-1}$. We furthermore introduce the notation
\begin{align*}
  (w, z)_{\Omega^j} &:= \sum_{K\in\mathcal{T}^j}(w, z)_K,
  & (w, z)_{\Omega}  &:= \sum_{K\in\mathcal{T}}(w, z)_K,
  & \langle w, z \rangle_{\Gamma_I} &:= \sum_{F \in \mathcal{F}_I} \langle w, z \rangle_{F},
  \\
  \langle w, z \rangle_{\partial\mathcal{T}} & := \sum_{K \in \mathcal{T}^j} \langle w, z \rangle_{\partial K},
  & \langle w, z \rangle_{\partial\mathcal{T}} & := \sum_{K \in \mathcal{T}} \langle w, z \rangle_{\partial K},
  & \langle w, z \rangle_{\Gamma_{IN}^f} &:= \sum_{F \in \mathcal{F}_I \cup \mathcal{F}_{N}^f} \langle w, z \rangle_{F}.
\end{align*}
If $w$ and $z$ are vector functions, then
$(w, z)_D := \sum_{i=1}^d (w_i, z_i)_D$ and
$\langle w, z\rangle_D := \sum_{i=1}^d \langle w_i,
z_i\rangle_D$. Similarly, if $w$ and $z$ are matrix functions, then
$(w, z)_D := \sum_{i,j=1}^d (w_{ij}, z_{ij})_D$ and
$\langle w, z\rangle_D := \sum_{i,j=1}^d \langle w_{ij},
z_{ij}\rangle_D$.

\subsection{Forms and their properties}
\label{ss:forms-props}

For $\boldsymbol{u}^j,\boldsymbol{v}^j \in \boldsymbol{V}^j(h)$, we
define
\begin{align*}
  a_h^j(\boldsymbol{u}^j, \boldsymbol{v}^j)
  :=&
      (2\mu^j \varepsilon(u), \varepsilon(v))_{\Omega^j}
      +\sum_{K\in\mathcal{T}^j} \langle 2\beta^j\mu^jh_K^{-1}(u-\bar{u}^j), v-\bar{v}^j \rangle_{\partial K}
  \\
    &\
      -\langle 2\mu^j\varepsilon(u)n^j, v-\bar{v}^j \rangle_{\partial\mathcal{T}^j}
      -\langle 2\mu^j\varepsilon(v)n^j, u-\bar{u}^j \rangle_{\partial\mathcal{T}^j},
  \\
  a_h(\boldsymbol{u}, \boldsymbol{v})
  :=& a_h^f(\boldsymbol{u}^f, \boldsymbol{v}^f) + a_h^b(\boldsymbol{u}^b, \boldsymbol{v}^b),  
\end{align*}
where $\beta^j > 0$ is a penalty parameter. It was shown in
\cite[Lemmas 2 and 3]{Cesmelioglu:2020} and \cite[Lemmas 4.2 and
4.3]{Rhebergen:2017} that there exist constants $\beta_0>0$,
$c_{ae}^j>0$, and $c_{ab}^j>0$, independent of $h$ and $\Delta t$,
such that
\begin{subequations}
  \begin{align}
    \label{eq:ah-coercive-j}
    a_h^j(\boldsymbol{v}_h^j, \boldsymbol{v}_h^j)
    &\geq c_{ae}^j\mu^j\tnorm{\boldsymbol{v}_h^j}_{v,j}^2
    && \forall \boldsymbol{v}_h^j\in \boldsymbol{V}^j_h, \ \beta > \beta_0,
    \\
    \label{eq:ah-continuity-j}
    \envert[0]{a_h^j(\boldsymbol{u}^j, \boldsymbol{v}^j)}
    &\leq c_{ac}^j\mu^j\tnorm{\boldsymbol{u}^j}_{v',j}\tnorm{\boldsymbol{v}^j}_{v',j}
    && \forall \boldsymbol{u}, \boldsymbol{v}\in \boldsymbol{V}^j(h).
  \end{align}  
\end{subequations}
For $\boldsymbol{v}^j \in \boldsymbol{V}^j(h)$ and
$\boldsymbol{q}^j \in \boldsymbol{Q}^j(h)$ we define
\begin{align*}
  b_h^{j}(\boldsymbol{v}^j, \boldsymbol{q}^j)
  &:= -(q, \nabla\cdot v)_{\Omega^j} + \langle \bar{q}^j, (v-\bar{v}^j)\cdot n^j \rangle_{\partial\mathcal{T}^j},
  &
  b_h(\boldsymbol{v}, \boldsymbol{q})
  &:= b_h^f(\boldsymbol{v}^f, \boldsymbol{q}^f) + b_h^b(\boldsymbol{v}^b, \boldsymbol{q}^b).
\end{align*}
The form $b_h^b(\cdot,\cdot)$ is also defined on
$(Z(h)\times\cbr[0]{0}) \times \boldsymbol{Q}^{b0}(h)$. We have the
following inf-sup conditions (see \cite[eqs
(17)-(19)]{Cesmelioglu:2023a} and \cite{Rhebergen:2018b}):
\begin{subequations}
  \begin{align}
    \label{eq:inf-sup-bj}
    \inf_{\boldsymbol{0} \ne \boldsymbol{q}_h \in \boldsymbol{Q}_h^j}
    \sup_{\boldsymbol{0} \ne \boldsymbol{v}_h \in \widetilde{\boldsymbol{V}}_h^j}
    \frac{b_h^j(\boldsymbol{v}_h, \boldsymbol{q}_h)}{\tnorm{\boldsymbol{v}_h}_{v,j}
    \tnorm{\boldsymbol{q}_h}_{q,j} } &\ge c_{bj}, \quad j=f,b,
    \\
    \label{eq:inf-sup-b}
    \inf_{\boldsymbol{0}\neq \boldsymbol{q}_h\in \boldsymbol{Q}_h}
    \sup_{\boldsymbol{0}\neq \boldsymbol{v}_h\in \widehat{\boldsymbol{V}}_h}
    \frac{b_h(\boldsymbol{v}_h, \boldsymbol{q}_h)}{\tnorm{\boldsymbol{v}_h}_v
    \tnorm{\boldsymbol{q}_h}_q}&\geq c_{b},
    \\
    \label{eq:inf-sup-bp}
    \inf_{\boldsymbol{0} \ne \boldsymbol{q}_h^p\in \boldsymbol{Q}_h^{b0}} \sup_{0 \ne w_h \in V_h^b}
    \frac{b_h^b((w_h,0), \boldsymbol{q}_h^p)}{\norm[0]{w_h}_{\Omega^b}
    \tnorm{\boldsymbol{q}_h^p}_{q,b} } &\ge c_{bp},    
  \end{align}
\end{subequations}
where $c_{bf}$, $c_{bb}$, $c_b$, and $c_{bp}$ are positive constants
independent of $h$. On
$(\boldsymbol{Q}^{b0}(h)\times \boldsymbol{Q}^b(h)) \times
\boldsymbol{Q}^{b}(h)$, we define
\begin{equation*}
  c_h((p,r),q) := (\lambda^{-1}(\alpha p - r), q)_{\Omega^b},
\end{equation*}
while on the interface we define
\begin{align*}
  a_h^I((\bar{u}^f,\bar{u}^b), (\bar{v}^f,\bar{v}^b))
  & := \langle \gamma \mu^f \kappa^{-1/2}(\bar{u}^f-\bar{u}^b)^t, (\bar{v}^f-\bar{v}^b)^t \rangle_{\Gamma_I},
  &
  b_h^{I}((\bar{v}^f, \bar{v}^b), \bar{q})
  & := \langle \bar{q}, (\bar{v}^f-\bar{v}^b) \cdot n^f \rangle_{\Gamma_I},
\end{align*}
where $a_h^I$ is defined on
$((\bar{V}^f+\bar{V}_h^f)\times(\bar{V}^b+\bar{V}_h^b)) \times
((\bar{V}^f+\bar{V}_h^f)\times(\bar{V}^b+\bar{V}_h^b))$ and $b_h^I$ is
defined on
$((\bar{V}^f+\bar{V}_h^f)\times(\bar{V}^b+\bar{V}_h^b)) \times
(\bar{Q}^{b0}+\bar{Q}_h^{b0})$.

The forms $a_h(\cdot, \cdot)$, $a_h^I(\cdot, \cdot)$,
$b_h(\cdot, \cdot)$, $b_h^I(\cdot, \cdot)$, and $c_h(\cdot, \cdot)$
discussed above are identical to those considered for the coupled
Stokes and Biot problem in \cite{Cesmelioglu:2023a}. In addition to
these forms, we now also require the following discrete convection
term for $w \in V^{f,div}(h)$ and
$\boldsymbol{u},\boldsymbol{v} \in \boldsymbol{V}^f(h)$:
\begin{multline*}
  t_h(w; \boldsymbol{u}, \boldsymbol{v})
  :=
  -(u\otimes w, \nabla v)_{\Omega^f}
  +\tfrac{1}{2}\langle w\cdot n^f \, (u+\bar{u}), v-\bar{v} \rangle_{\partial\mathcal{T}^f}
  \\
  +\tfrac{1}{2}\langle \envert[0]{w\cdot n^f}(u-\bar{u}), v-\bar{v} \rangle_{\partial\mathcal{T}^f}
  + \langle (w \cdot n^f) \bar{u}, \bar{v} \rangle_{\Gamma_{IN}^f}.
\end{multline*}
We have the following properties for $t_h$.

\begin{proposition}
  For all $w_1,w_2 \in V^{f,\text{div}}(h)$, and
  $\boldsymbol{u}, \boldsymbol{v} \in \boldsymbol{V}^f(h)$, there
  exists a $c_w>0$ such that
  \begin{equation}
    \label{eq:upperboundthdif}
    \envert[0]{t_h(w_1;\boldsymbol{u},\boldsymbol{v}) - t_h(w_2;\boldsymbol{u},\boldsymbol{v})}
    \le c_w \norm[0]{w_1 - w_2}_{1,h,\Omega^f} \tnorm{\boldsymbol{u}}_{v,f}\tnorm{\boldsymbol{v}}_{v,f}.
  \end{equation}
\end{proposition}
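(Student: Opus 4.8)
The plan is to bound each of the four terms appearing in the difference $t_h(w_1;\boldsymbol{u},\boldsymbol{v}) - t_h(w_2;\boldsymbol{u},\boldsymbol{v})$ separately, exploiting the fact that $t_h(w;\cdot,\cdot)$ is \emph{linear} in $w$, so the difference equals $t_h(w_1-w_2;\boldsymbol{u},\boldsymbol{v})$ with $\delta w := w_1 - w_2 \in V^{f,\text{div}}(h)$. Writing things out, we must control
\begin{equation*}
  -(u\otimes \delta w, \nabla v)_{\Omega^f},
  \quad
  \tfrac12\langle \delta w\cdot n^f(u+\bar u), v-\bar v\rangle_{\partial\mathcal{T}^f},
  \quad
  \tfrac12\langle |\delta w\cdot n^f|(u-\bar u), v-\bar v\rangle_{\partial\mathcal{T}^f},
  \quad
  \langle (\delta w\cdot n^f)\bar u, \bar v\rangle_{\Gamma_{IN}^f}.
\end{equation*}

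First I would handle the volume term: by H\"older's inequality with exponents $(3,6,\ldots)$ or more simply $\|u\otimes \delta w\|_{\Omega^f}\le \|u\|_{L^4}\|\delta w\|_{L^4}$, giving $|(u\otimes\delta w,\nabla v)_{\Omega^f}|\le \|u\|_{L^4(\Omega^f)}\|\delta w\|_{L^4(\Omega^f)}\|\nabla v\|_{\Omega^f}$; then use discrete Sobolev embeddings of the broken $H^1$-type norm into $L^4$ (valid since $d\le 3$) to replace $\|u\|_{L^4}$ by $\tnorm{\boldsymbol u}_{v,f}$, $\|\delta w\|_{L^4}$ by $\|\delta w\|_{1,h,\Omega^f}$, and bound $\|\nabla v\|_{\Omega^f}\le\tnorm{\boldsymbol v}_{v,f}$. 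For the facet terms, I would apply Cauchy--Schwarz on $\partial\mathcal{T}^f$, then a discrete trace inequality $\|\delta w\cdot n^f\|_{L^4(\partial K)}\lesssim h_K^{-1/4}\|\delta w\|_{L^4(K)}$ combined with $h_K^{-1/2}\|u-\bar u\|_{\partial K}$ and $h_K^{-1/2}\|v-\bar v\|_{\partial K}$ being controlled by the jump contributions in $\tnorm{\cdot}_{v,f}$; the factor $h_K^{-1}$ splits as $h_K^{-1/2}\cdot h_K^{-1/2}$ to feed the two jump norms, while the remaining $h_K^{1/2}$ weight on the traces of $u$ and $w$ is absorbed using trace inequalities. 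The $|\delta w\cdot n^f|$ term is treated identically since $|\,\cdot\,|$ does not affect the estimate. The interface term $\langle(\delta w\cdot n^f)\bar u,\bar v\rangle_{\Gamma_{IN}^f}$ is handled by H\"older on $\Gamma_{IN}^f$ together with the boundary/interface trace-Poincar\'e inequality \eqref{eq:dtrpoincareineq} with $r=4$, bounding $\|\delta w\|_{L^4(\Gamma_{IN}^f)}\le c_{si,4}\|\delta w\|_{1,h,\Omega^f}$ and $\|\bar u\|_{L^4(\Gamma_{IN}^f)}, \|\bar v\|_{L^4(\Gamma_{IN}^f)}$ similarly controlled by $\tnorm{\boldsymbol u}_{v,f}$, $\tnorm{\boldsymbol v}_{v,f}$ (noting $\bar u$ and $u$ agree up to the jump on facets, so trace norms of $\bar u$ are bounded by $\tnorm{\boldsymbol u}_{v,f}$).

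Summing the four bounds and setting $c_w$ to be the maximum of the resulting constants (which depend only on the shape-regularity constant, $d$, the polynomial degree $k$, and the various embedding constants $c_p, c_{si,4}$, and the trace constant, but not on $h$ or $\Delta t$) yields \eqref{eq:upperboundthdif}.

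I expect the main obstacle to be the facet terms: one must be careful that the $h_K^{-1}$ weight is distributed so that exactly one power of $h_K^{-1/2}$ lands on each of $\|u-\bar u\|_{\partial K}$ and $\|v-\bar v\|_{\partial K}$ to match their appearance in $\tnorm{\cdot}_{v,f}$, while the leftover positive power of $h_K$ is precisely what is consumed by the scaled discrete trace inequality applied to $\delta w$ (and to the $u+\bar u$ / $u-\bar u$ factor, using that $\bar u$ inherits trace bounds from $u$ via the jump term). Also one needs a uniform-in-$h$ discrete embedding $\|v_h\|_{L^4(K)}\lesssim \|v_h\|_{1,h,\Omega^f}$ on each element and globally — for $d=3$ the exponent $4$ is the relevant borderline and such estimates are standard for piecewise polynomials with the broken norm, but it is worth citing the appropriate discrete Sobolev embedding result. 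The rest is bookkeeping with Cauchy--Schwarz and summation over elements.
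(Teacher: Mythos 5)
Your overall strategy (bound each term of the difference by H\"older on elements and facets, discrete trace inequalities, and discrete Sobolev embeddings of the broken norm into $L^4$) is sound and would deliver the estimate, but it rests on one claim that is false as stated: $t_h(w;\cdot,\cdot)$ is \emph{not} linear in $w$, because of the upwinding term $\tfrac12\langle |w\cdot n^f|(u-\bar u), v-\bar v\rangle_{\partial\mathcal T^f}$ (and the $\Gamma_{IN}^f$ term is linear, but the absolute value is not). Hence the difference does not equal $t_h(w_1-w_2;\boldsymbol u,\boldsymbol v)$. The repair is immediate — the reverse triangle inequality gives $\bigl||w_1\cdot n^f|-|w_2\cdot n^f|\bigr|\le |(w_1-w_2)\cdot n^f|$, after which you are indeed left bounding exactly the four quantities you list — but as written the first sentence of your argument is wrong and should be replaced by this pointwise estimate. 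A second, smaller omission: the step $\norm[0]{\nabla v}_{\Omega^f}\le\tnorm{\boldsymbol v}_{v,f}$ is not free, since $\tnorm{\cdot}_{v,f}$ contains only $\varepsilon(v)$; you need a broken Korn inequality (the paper cites \cite{Brenner:2003} for precisely this kind of statement).

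As for the comparison with the paper: the authors do not estimate the four terms of the original form directly. They first use the algebraic identity $\tfrac12(w\cdot n^f+|w\cdot n^f|)u+\tfrac12(w\cdot n^f-|w\cdot n^f|)\bar u = (w\cdot n^f)\bar u + S_w(u-\bar u)$ with $S_w=\max(w\cdot n^f,0)$, together with integration by parts, to rewrite $t_h$ in a compact three-term form, and then invoke an earlier result (\cite[Proposition~3.4]{Cesmelioglu:2017}) for the actual continuity estimate. Your route is more self-contained and makes the required trace/embedding machinery explicit (the $h_K^{-1}=h_K^{-1/2}\cdot h_K^{-1/2}$ bookkeeping on the facet terms and the $L^4$ embeddings are exactly what the cited proposition does under the hood); the paper's route is shorter because it delegates the analysis, and its rewriting with $S_w$ sidesteps the nonlinearity issue cleanly since $|S_{w_1}-S_{w_2}|\le |(w_1-w_2)\cdot n^f|$ plays the role of your reverse triangle inequality. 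With the two fixes above your argument is a valid alternative proof.
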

\begin{proof}
  Using that
  $\tfrac{1}{2}(w\cdot n^f + |w\cdot n^f|)u + \tfrac{1}{2}(w\cdot n^f
  - |w\cdot n^f|)\bar{u} = w\cdot n^f \bar{u} + S_w(u-\bar{u})$, where
  $S_w = \max(w\cdot n^f, 0)$, we can write $t_h$ after integration by
  parts as
  \begin{equation*}
    t_h(w; \boldsymbol{u}, \boldsymbol{v})
    =(\nabla u, v \otimes w)_{\Omega^f}
    - \langle ((u-\bar{u})\otimes w)n^f, v \rangle_{\partial\mathcal{T}^f}
    + \langle S_w(u - \bar{u}), v - \bar{v} \rangle_{\partial \mathcal{T}^f}.
  \end{equation*}
  The remainder of the proof is given by \cite[Proposition
  3.4]{Cesmelioglu:2017}.
\end{proof}

\begin{proposition}
  For $w \in V^{f,\text{div}}(h)$ and $\boldsymbol{v} \in \boldsymbol{V}(h)$ it holds that
  \begin{equation}
    \label{eq:positivityth}
    t_h(w; \boldsymbol{v}, \boldsymbol{v})
    =\tfrac{1}{2}\langle \envert[0]{w\cdot n^f}, |v-\bar{v}|^2 \rangle_{\partial\mathcal{T}^f}
    + \tfrac{1}{2}\langle w \cdot n^f, |\bar{v}|^2 \rangle_{\Gamma_{IN}^f}.
  \end{equation}
\end{proposition}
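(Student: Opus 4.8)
The plan is to substitute $\boldsymbol{u} = \boldsymbol{v}$ directly into the definition of $t_h$ and simplify each resulting term. Doing so, the third term on the right-hand side of the definition of $t_h$ becomes $\tfrac{1}{2}\langle |w\cdot n^f|, |v-\bar{v}|^2 \rangle_{\partial\mathcal{T}^f}$, which is already the first term claimed in \eqref{eq:positivityth}; no manipulation is needed there. It therefore remains to show that the volume term $-(v\otimes w, \nabla v)_{\Omega^f}$, the term $\tfrac{1}{2}\langle w\cdot n^f (v+\bar{v}), v-\bar{v} \rangle_{\partial\mathcal{T}^f}$, and the interface term $\langle (w \cdot n^f) \bar{v}, \bar{v} \rangle_{\Gamma_{IN}^f}$ combine to produce $\tfrac{1}{2}\langle w \cdot n^f, |\bar{v}|^2 \rangle_{\Gamma_{IN}^f}$.

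For the volume term I would write $(v\otimes w, \nabla v)_{\Omega^f} = \sum_{i=1}^d (w\cdot\nabla v_i, v_i)_{\Omega^f}$ and integrate by parts element by element, using $(w\cdot\nabla v_i)\,v_i = \tfrac{1}{2}\, w\cdot\nabla(v_i^2)$ together with the elementwise divergence-free property $\nabla\cdot w = 0$ that holds for $w \in V^{f,\mathrm{div}}(h)$. This yields $(v\otimes w, \nabla v)_{\Omega^f} = \tfrac{1}{2}\langle w\cdot n^f, |v|^2 \rangle_{\partial\mathcal{T}^f}$. Separately, using $(v+\bar{v})\cdot(v-\bar{v}) = |v|^2 - |\bar{v}|^2$ facetwise, the second term equals $\tfrac{1}{2}\langle w\cdot n^f, |v|^2 \rangle_{\partial\mathcal{T}^f} - \tfrac{1}{2}\langle w\cdot n^f, |\bar{v}|^2 \rangle_{\partial\mathcal{T}^f}$. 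Adding these two contributions, the $|v|^2$ parts cancel and what survives is $-\tfrac{1}{2}\langle w\cdot n^f, |\bar{v}|^2 \rangle_{\partial\mathcal{T}^f}$.

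The last step is to check that $-\tfrac{1}{2}\langle w\cdot n^f, |\bar{v}|^2 \rangle_{\partial\mathcal{T}^f} + \langle w\cdot n^f, |\bar{v}|^2 \rangle_{\Gamma_{IN}^f} = \tfrac{1}{2}\langle w\cdot n^f, |\bar{v}|^2 \rangle_{\Gamma_{IN}^f}$, i.e., that the sum over $\partial\mathcal{T}^f$ collapses to $\Gamma_{IN}^f$. On each interior facet of $\mathcal{T}^f$ the two adjacent element contributions cancel because $\bar{v}$ is single-valued and $\jump{w\cdot n^f} = 0$, the latter since $w \in H(\mathrm{div};\Omega^f)$; on facets in $\Gamma_D^f$ the term vanishes because $\bar{v}|_{\Gamma_D^f} = 0$; and the remaining boundary facets of $\mathcal{T}^f$ are exactly those in $\Gamma_N^f \cup \Gamma_I = \Gamma_{IN}^f$. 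Assembling the pieces gives \eqref{eq:positivityth}.

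I expect the only mild subtlety to be this final step: one must recall that it is the $H(\mathrm{div})$-conformity of $w$ that kills the interior-facet jump terms, and that the homogeneous Dirichlet condition $\bar{v}|_{\Gamma_D^f}=0$ built into $\boldsymbol{V}^f(h)$ is what removes the unwanted boundary contribution, leaving precisely $\Gamma_{IN}^f$. The remaining work — the elementwise integration by parts — is routine, with the elementwise incompressibility of $w$ being the essential ingredient.
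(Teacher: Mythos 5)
Your proposal is correct and follows essentially the same route as the paper: substitute $\boldsymbol{u}=\boldsymbol{v}$, convert the volume term to a boundary term via elementwise integration by parts using $\nabla\cdot w=0$, cancel the $|v|^2$ contributions against those from the identity $(v+\bar v)\cdot(v-\bar v)=|v|^2-|\bar v|^2$, and collapse $\langle w\cdot n^f,|\bar v|^2\rangle_{\partial\mathcal{T}^f}$ to $\Gamma_{IN}^f$ using single-valuedness of $\bar v$ and $w\cdot n$, together with $\bar v|_{\Gamma_D^f}=0$. Your final step in fact spells out the facet-cancellation argument more explicitly than the paper does.
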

\begin{proof}
  Note that
  \begin{equation*}
    t_h(w; \boldsymbol{v}, \boldsymbol{v})
    =
    -(v\otimes w, \nabla v)_{\Omega^f}
    +\tfrac{1}{2}\langle w\cdot n^f \, (v+\bar{v}), v-\bar{v} \rangle_{\partial\mathcal{T}^f}
    +\tfrac{1}{2}\langle |w\cdot n^f|, |v-\bar{v}|^2 \rangle_{\partial\mathcal{T}^f}
    + \langle w \cdot n^f, |\bar{v}|^2 \rangle_{\Gamma_{IN}^f}.    
  \end{equation*}
  Note that $(v+\bar{v}) \cdot (v-\bar{v}) =
  |v|^2-|\bar{v}|^2$. Furthermore,
  $-(v\otimes w, \nabla v)_{\Omega^f} = -\langle \tfrac{1}{2} w\cdot
  n^f, |v|^2 \rangle_{\partial\mathcal{T}^f}$ since
  $-v\otimes w : \nabla v = -\tfrac{1}{2} \nabla
  \cdot(|v|^2w)$. Therefore, also using that
  $\langle \tfrac{1}{2}w\cdot n^f, |\bar{v}|^2
  \rangle_{\partial\mathcal{T}^f} = \langle \tfrac{1}{2}w\cdot n^f,
  |\bar{v}|^2 \rangle_{\Gamma_I} + \langle \tfrac{1}{2}w\cdot n^f,
  |\bar{v}|^2 \rangle_{\Gamma^f_N}$, the result follows.
\end{proof}

\begin{proposition}
  \label{prop:coercivitythahf}
  Let $w \in V^{f,\text{div}}(h)$ and
  $\norm[0]{w\cdot n}_{\Gamma_{IN}^f} \le
  \tfrac{1}{2}\mu^fc_{ae}^f(c_{pq}^2+c_{si,4}^2)^{-1}$. Then, for
  $\beta > \beta_0$,
  \begin{equation}
    \label{eq:coercivityahthresult}
    t_h(w; \boldsymbol{v}_h, \boldsymbol{v}_h)
    + a_h^f(\boldsymbol{v}_h, \boldsymbol{v}_h)
    \ge
    \tfrac{1}{2}c_{ae}^f\mu^f\tnorm{\boldsymbol{v}_h^f}_{v,f}^2
    \quad \forall \boldsymbol{v}_h \in \boldsymbol{V}_h^f.
  \end{equation}
\end{proposition}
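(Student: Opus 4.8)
The plan is to bound $t_h(w;\boldsymbol{v}_h,\boldsymbol{v}_h)$ from below using the positivity identity \eqref{eq:positivityth}, keep only the single term that can be negative, control that term through the smallness of $w\cdot n$ on $\Gamma_{IN}^f$, and then absorb it into the coercivity bound \eqref{eq:ah-coercive-j} for $a_h^f$.

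First I would invoke \eqref{eq:positivityth} with $\boldsymbol{u}=\boldsymbol{v}=\boldsymbol{v}_h$. Since $\langle\envert[0]{w\cdot n^f},\envert[0]{v_h-\bar v_h}^2\rangle_{\partial\mathcal{T}^f}\ge0$, this gives
\[
  t_h(w;\boldsymbol{v}_h,\boldsymbol{v}_h)\ \ge\ \tfrac{1}{2}\langle w\cdot n^f,\envert[0]{\bar v_h}^2\rangle_{\Gamma_{IN}^f}\ \ge\ -\tfrac{1}{2}\norm[0]{w\cdot n}_{\Gamma_{IN}^f}\norm[0]{\bar v_h}_{4,0,\Gamma_{IN}^f}^2,
\]
where the last step uses the Cauchy--Schwarz inequality on $\Gamma_{IN}^f$ together with $\norm[0]{\envert[0]{\bar v_h}^2}_{\Gamma_{IN}^f}=\norm[0]{\bar v_h}_{4,0,\Gamma_{IN}^f}^2$.

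Next I would control $\norm[0]{\bar v_h}_{4,0,\Gamma_{IN}^f}$ by $\tnorm{\boldsymbol{v}_h}_{v,f}$. Writing $\bar v_h=v_h-(v_h-\bar v_h)$ on $\Gamma_{IN}^f$, the triangle inequality together with the discrete trace--Poincar\'e inequality \eqref{eq:dtrpoincareineq} for $r=4$ gives $\norm[0]{v_h}_{4,0,\Gamma_{IN}^f}\le c_{si,4}\tnorm{\boldsymbol{v}_h}_{v,f}$, while for the remaining part one uses that, on each boundary facet $F$, $v_h-\bar v_h$ is a polynomial, so that $\norm[0]{v_h-\bar v_h}_{4,0,F}\le C h_F^{-(d-1)/4}\norm[0]{v_h-\bar v_h}_{F}$ by a scaling/inverse argument; summing these contributions and using both $\sum_{F}h_F^{-1}\norm[0]{v_h-\bar v_h}_{F}^2\le C\tnorm{\boldsymbol{v}_h}_{v,f}^2$ and the fact that each summand $h_F^{-1}\norm[0]{v_h-\bar v_h}_{F}^2$ is itself bounded by $C\tnorm{\boldsymbol{v}_h}_{v,f}^2$ yields $\norm[0]{v_h-\bar v_h}_{4,0,\Gamma_{IN}^f}\le c_{pq}\tnorm{\boldsymbol{v}_h}_{v,f}$ with $c_{pq}$ independent of $h$ (the residual power of $h_F$ being nonnegative for $d=2,3$). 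With $(a+b)^2\le2a^2+2b^2$ this gives $\norm[0]{\bar v_h}_{4,0,\Gamma_{IN}^f}^2\le2(c_{pq}^2+c_{si,4}^2)\tnorm{\boldsymbol{v}_h}_{v,f}^2$.

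Inserting this into the first estimate and using the hypothesis $\norm[0]{w\cdot n}_{\Gamma_{IN}^f}\le\tfrac{1}{2}\mu^fc_{ae}^f(c_{pq}^2+c_{si,4}^2)^{-1}$ gives $t_h(w;\boldsymbol{v}_h,\boldsymbol{v}_h)\ge-\tfrac{1}{2}c_{ae}^f\mu^f\tnorm{\boldsymbol{v}_h}_{v,f}^2$; adding the coercivity bound $a_h^f(\boldsymbol{v}_h,\boldsymbol{v}_h)\ge c_{ae}^f\mu^f\tnorm{\boldsymbol{v}_h}_{v,f}^2$ from \eqref{eq:ah-coercive-j}, valid since $\beta>\beta_0$, then yields \eqref{eq:coercivityahthresult}. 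I expect the main obstacle to be the facet $L^4$--$L^2$ inverse estimate and the accompanying bookkeeping of the powers of $h$ that keeps $c_{pq}$ bounded as $h\to0$; once $\norm[0]{\bar v_h}_{4,0,\Gamma_{IN}^f}\lesssim\tnorm{\boldsymbol{v}_h}_{v,f}$ is established, the rest is just Cauchy--Schwarz and the already known coercivity of $a_h^f$.
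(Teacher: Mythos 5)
Your argument is correct and follows the same route as the paper: lower-bound $t_h(w;\boldsymbol{v}_h,\boldsymbol{v}_h)$ via the positivity identity \eqref{eq:positivityth}, estimate the interface/outflow term using a facet $L^4$--$L^2$ inverse inequality together with \eqref{eq:dtrpoincareineq} for $r=4$, and absorb the result into the coercivity of $a_h^f$ from \eqref{eq:ah-coercive-j}. The only difference is that you write out the splitting $\bar v_h = v_h - (v_h-\bar v_h)$ and the scaling argument explicitly, whereas the paper delegates exactly this step to a citation of \cite[Lemma 6]{Cesmelioglu:2023c}; your intermediate bound $t_h(w;\boldsymbol{v}_h,\boldsymbol{v}_h)\ge -(c_{pq}^2+c_{si,4}^2)\norm[0]{w\cdot n}_{\Gamma_{IN}^f}\tnorm{\boldsymbol{v}_h}_{v,f}^2$ matches the paper's.
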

\begin{proof}
  By \cref{eq:positivityth}, we find that
  \begin{equation*}
    t_h(w; \boldsymbol{v}_h, \boldsymbol{v}_h)
    \ge -\tfrac{1}{2}\langle |w \cdot n^f|, |\bar{v}_h|^2 \rangle_{\Gamma_{IN}^f}.
  \end{equation*}
  By a scaling identity there exists a constant $c_{pq} > 0$
  independent of $h$ such that
  $\norm[0]{\bar{v}}_{4,0,\partial K} \le c_{pq}
  h^{(1-d)/4}\norm[0]{\bar{v}}_{\partial K}$ for
  $\bar{v} \in \cbr[0]{ q \in L^2(\partial K)\, :\, q|_F \in P_k(F),\
    \forall F \in \mathcal{F}(K)}$. By the identical steps as used in
  the proof of \cite[Lemma 6]{Cesmelioglu:2023c} (see also \cite[Lemma
  2]{Discacciati:2017}) it then follows that
  \begin{equation*}
  t_h(w; \boldsymbol{v}_h, \boldsymbol{v}_h) \ge
  -(c_{pq}^2+c_{si,4}^2)\norm[0]{w\cdot
    n}_{\Gamma_{IN}^f}\tnorm{\boldsymbol{v}_h}_{v,f}^2,       
  \end{equation*}
  where $c_{si,4}$ is the constant from \cref{eq:dtrpoincareineq} with
  $r=4$. We find, using \cref{eq:ah-coercive-j},
  \begin{equation*}
    t_h(w; \boldsymbol{v}_h, \boldsymbol{v}_h)
    + a_h^f(\boldsymbol{v}_h, \boldsymbol{v}_h)
    \ge
    \del[1]{c_{ae}^f\mu^f-(c_{pq}^2+c_{si,4}^2)\norm[0]{w\cdot n}_{\Gamma_{IN}^f}}
    \tnorm{\boldsymbol{v}_h^f}_{v,f}^2.
  \end{equation*}
  The result follows by the assumption on
  $\norm[0]{w\cdot n}_{\Gamma_{IN}^f}$.
\end{proof}

\begin{remark}
  In Proposition \ref{prop:coercivitythahf}, we assume a smallness
  condition on $\norm[0]{w\cdot n}_{\Gamma_{IN}^f}$. If $\Gamma_N^f$
  represents an outflow boundary (on which $w\cdot n > 0$), then the
  smallness condition only needs to hold on
  $\norm[0]{w\cdot n}_{\Gamma_I}$. Numerically, however, it cannot be
  guaranteed that $w \cdot n > 0$ on $\Gamma_N^f$.
\end{remark}

\subsection{The HDG method}
\label{ss:hdgmethod}

The semi-discrete HDG method for the coupled Navier--Stokes/Biot
problem \cref{eq:navierstokes,eq:biot,eq:interface,eq:bcs} is given
by: For $t \in J$, find
$(\boldsymbol{u}_h(t), \boldsymbol{p}_h(t), z_h(t),
\boldsymbol{p}_h^p(t)) \in \boldsymbol{X}_h$ such that for all
$(\boldsymbol{v}_h, \boldsymbol{q}_h, w_h, \boldsymbol{q}_h^p) \in
\boldsymbol{X}_h$,
\begin{subequations}
  \label{eq:hdgsd}
  \begin{align}
    & (\partial_tu_h, v_h)_{\Omega^f}
      + t_h(u_h^f; \boldsymbol{u}_h^f, \boldsymbol{v}_h^f)
      + a_h(\boldsymbol{u}_h, \boldsymbol{v}_h)    
      + b_h(\boldsymbol{v}_h, \boldsymbol{p}_h)
      + a_h^I((\bar{u}_h^f,\partial_t\bar{u}_h^b),(\bar{v}_h^f,\bar{v}_h^b))
      + b_h^I((\bar{v}_h^f,\bar{v}_h^b), \bar{p}_h^p)
    \\ \nonumber
    & \hspace{39em}
      = (f, v_h)_{\Omega},
    \\
    & b_h(\boldsymbol{u}_h, \boldsymbol{q}_h)
      + c_h((p_h^p,p_h^b),q_h^b)
      = 0,    
    \\
    &(c_0\partial_t p_h^p, q_h^p)_{\Omega^b}
      + c_h((\partial_tp_h^p,\partial_tp_h^b), \alpha q_h^p)
      - b_h^{b}((z_h, 0), \boldsymbol{q}_h^p)
      - b_h^I((\bar{u}_h^f, \partial_t\bar{u}_h^b), \bar{q}_h^p)
      =(g^b, q_h^p)_{\Omega^b},
    \\
    &(\mu^f\kappa^{-1} z_h, w_h)_{\Omega^b} + b_h^{b}((w_h, 0), \boldsymbol{p}_h^p)=0.
  \end{align}
\end{subequations}
Using Backward Euler time-stepping, with lagging of the convective
velocity, the fully-discrete HDG method is given by: For
$n=0,1,\cdots,N-1$, find
$(\boldsymbol{u}_h^{n+1}, \boldsymbol{p}_h^{n+1}, z_h^{n+1},
\boldsymbol{p}_h^{p,n+1}) \in \boldsymbol{X}_h$ such that for all
$(\boldsymbol{v}_h, \boldsymbol{q}_h, w_h, \boldsymbol{q}_h^p) \in
\boldsymbol{X}_h$,
\begin{subequations}
  \label{eq:hdgfd}
  \begin{align}
    \label{eq:hdgfd-a}
    & (d_tu_h^{n+1}, v_h)_{\Omega^f}
      + t_h(u_h^{f,n}; \boldsymbol{u}_h^{f,n+1}, \boldsymbol{v}_h^f)
      + a_h(\boldsymbol{u}_h^{n+1}, \boldsymbol{v}_h)
      + b_h(\boldsymbol{v}_h, \boldsymbol{p}_h^{n+1})
      + a_h^I((\bar{u}_h^{f,n+1},d_t\bar{u}_h^{b,n+1}),(\bar{v}_h^f,\bar{v}_h^b))      
    \\ \nonumber
    & \hspace{28em}
      + b_h^I((\bar{v}_h^f,\bar{v}_h^b), \bar{p}_h^{p,n+1})
      = (f^{n+1}, v_h)_{\Omega},
    \\
    \label{eq:hdgfd-b}
    & b_h(\boldsymbol{u}_h^{n+1}, \boldsymbol{q}_h)
      + c_h((p_h^{p,n+1},p_h^{b,n+1}),q_h^b)
      = 0,
    \\
    \label{eq:hdgfd-c}
    &(c_0d_t p_h^{p,n+1}, q_h^p)_{\Omega^b}
      + c_h((d_tp_h^{p,n+1},d_tp_h^{b,n+1}), \alpha q_h^p)
      - b_h^{b}((z_h^{n+1}, 0), \boldsymbol{q}_h^p)      
      - b_h^I((\bar{u}_h^{f,n+1}, d_t\bar{u}_h^{b,n+1}), \bar{q}_h^p)
    \\ \nonumber
    & \hspace{37em}
      =(g^{b,n+1}, q_h^p)_{\Omega^b},
    \\
    \label{eq:hdgfd-d}
    &(\mu^f\kappa^{-1} z_h^{n+1}, w_h)_{\Omega^b} + b_h^{b}((w_h, 0), \boldsymbol{p}_h^{p,n+1})=0.
  \end{align}
\end{subequations}
Note that despite the coupled Navier--Stokes/Biot problem being
nonlinear, the fully-discrete HDG method \cref{eq:hdgfd} is linear at
each time step due to lagging of the convective velocity.

\begin{remark}
  The HDG method \cref{eq:hdgfd} is an extension of the HDG method
  previously presented in \cite{Cesmelioglu:2023a} for the coupled
  Stokes/Biot model. For this HDG method, it was proven in \cite[Lemma
  1]{Cesmelioglu:2023a} that the discrete velocities and displacement
  are divergence-conforming. Furthermore, it was shown that the
  compressibility equations are satisfied pointwise on the elements
  and that, for the semi-discrete problem \cref{eq:hdgsd} and in the
  absence of source/sink terms, that mass is conserved pointwise on
  the elements. These properties are inherited by \cref{eq:hdgfd}. The
  proof is identical to that of \cite[Lemma 1]{Cesmelioglu:2023a} and
  therefore not included here.
\end{remark}

\section{Stability and well-posedness}
\label{sec:wp}

Before showing well-posedness, let us recall the following result from
\cite[Lemma 2]{Cesmelioglu:2023b}: if $\boldsymbol{p}_h^{p,n}$ and
$z_h^n$ are part of the solution to \cref{eq:hdgfd} for $n \ge 1$,
then there exists a constant $c_{pd}$, independent of $h$, such that
\begin{equation}
  \label{eq:tnormppbound}
  \norm[0]{p_h^{p,n}}_{\Omega^b} \le c_{pp}\norm[0]{p_h^{p,n}}_{1,h,\Omega^b} \le
  c_{pp} \tnorm{\boldsymbol{p}_h^{p,n}}_{1,h,b} \le c_{td}\mu^f\kappa^{-1}\norm[0]{z_h^n}_{\Omega^b},
\end{equation}
where $c_{td}=c_{pp}c_{pd}$. 

The following lemma shows uniqueness of the discrete solution at
$t_{n+1}$ given the discrete solution at $t_n$ and under a smallness
assumption on $\norm[0]{u_h^{f,n} \cdot n}_{\Gamma_{IN}^f}$.

\begin{lemma}[Uniqueness]
  \label{lem:uniqueness}
  Let $u_h^0$ and $p_h^{p,0}$ be the given initial conditions. Assume
  $(\boldsymbol{u}_h^{n}, \boldsymbol{p}_h^{n}, z_h^{n},
  \boldsymbol{p}_h^{p,n}) \in \boldsymbol{X}_h$ is the given solution
  to \cref{eq:hdgfd} for some $1 \le n \le N-1$. If
  $u_h^{f,n} \in V_h^f$ is such that
  $\norm[0]{u_h^{f,n} \cdot n}_{\Gamma_{IN}^f} \le
  \tfrac{1}{2}\mu^fc_{ae}^f(c_{pq}^2+c_{si,4}^2)^{-1}$ for
  $0 \le n \le N-1$, and if the solution
  $(\boldsymbol{u}_h^{n+1}, \boldsymbol{p}_h^{n+1}, z_h^{n+1},
  \boldsymbol{p}_h^{p,n+1}) \in \boldsymbol{X}_h$ to \cref{eq:hdgfd}
  exists, then it is unique.
\end{lemma}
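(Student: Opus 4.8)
Since the fully discrete system \cref{eq:hdgfd} is linear at each time step (the convective velocity $u_h^{f,n}$ is lagged and hence fixed data), uniqueness is equivalent to showing that the associated homogeneous problem has only the trivial solution. So the plan is to set $f^{n+1}=0$, $g^{b,n+1}=0$, and all data at time level $n$ (i.e. $u_h^n$, $p_h^{p,n}$, and consequently $p_h^{b,n}$) equal to zero, and to show that the resulting solution $(\boldsymbol{u}_h^{n+1},\boldsymbol{p}_h^{n+1},z_h^{n+1},\boldsymbol{p}_h^{p,n+1})$ vanishes. Equivalently, letting $(\boldsymbol{u}_h,\boldsymbol{p}_h,z_h,\boldsymbol{p}_h^p)$ denote the difference of two putative solutions at $t_{n+1}$, this difference satisfies \cref{eq:hdgfd} with zero right-hand sides and with the time-level-$n$ terms dropped (they cancel), and we must show it is zero. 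Note that the divergence-conforming property inherited from \cite[Lemma~1]{Cesmelioglu:2023a} guarantees $u_h^f \in V^{f,\mathrm{div}}(h)$, so the forms involving $t_h$ are well-defined.

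The core of the argument is the standard energy test: take $\boldsymbol{v}_h=\boldsymbol{u}_h$ in the homogeneous version of \cref{eq:hdgfd-a}, $\boldsymbol{q}_h$ built from $\boldsymbol{p}_h$ in \cref{eq:hdgfd-b}, $q_h^p$ proportional to $p_h^p$ in \cref{eq:hdgfd-c}, and $w_h=z_h$ in \cref{eq:hdgfd-d}, then add the equations with appropriate weights ($\Delta t$ factors, and an $\alpha^{-1}$-type scaling to make the coupling terms $b_h^I$ and $c_h$ cancel, exactly as in the well-posedness analysis for the Stokes/Biot case in \cite{Cesmelioglu:2023a} or the Navier--Stokes/Darcy case in \cite{Chaabane:2017}). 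The coupling terms $b_h(\boldsymbol{u}_h,\boldsymbol{p}_h)$, $b_h^I$, and $c_h$ should telescope out, leaving a sum of manifestly nonnegative contributions: the mass term $\tfrac{1}{\Delta t}\norm{u_h}_{\Omega^f}^2$ from $(d_t u_h^{n+1},v_h)$ with zero previous data; the term $t_h(u_h^{f,n};\boldsymbol{u}_h,\boldsymbol{u}_h)+a_h(\boldsymbol{u}_h,\boldsymbol{u}_h)$, which is bounded below by $\tfrac{1}{2}c_{ae}^f\mu^f\tnorm{\boldsymbol{u}_h^f}_{v,f}^2$ thanks precisely to \cref{prop:coercivitythahf} and the smallness hypothesis $\norm{u_h^{f,n}\cdot n}_{\Gamma_{IN}^f}\le \tfrac12\mu^f c_{ae}^f(c_{pq}^2+c_{si,4}^2)^{-1}$, plus the coercive $a_h^b$ contribution; the interface friction term $a_h^I$, which is nonnegative; the storage term $c_0\norm{p_h^p}_{\Omega^b}^2/\Delta t$; the term $c_h((p_h^p,p_h^b),\alpha p_h^p + \text{(stuff)})$ handled as in \cite{Cesmelioglu:2023a} to yield a nonnegative quadratic in $(p_h^p,p_h^b)$ (using that $p_h^{p,n}=p_h^{b,n}=0$); and the Darcy term $\mu^f\kappa^{-1}\norm{z_h}_{\Omega^b}^2$. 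Adding everything, the zero right-hand side forces each nonnegative piece to vanish, so $u_h^f=0$ in $\Omega^f$, $\tnorm{\boldsymbol{u}_h^f}_{v,f}=0$, $\tnorm{\boldsymbol{u}_h^b}_{v,b}=0$, $(\bar u_h^f-\bar u_h^b)^t=0$ on $\Gamma_I$, $z_h=0$, $p_h^p=0$, $p_h^b=0$, and (via the discrete Poincaré inequalities \cref{eq:dpoincareineq,eq:dpoincareineq-b} and coercivity of $a_h^b$) all element displacement components vanish.

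With the energy estimate giving $z_h=0$ and $p_h^p=0$, equation \cref{eq:tnormppbound} is consistent and in fact not needed for those, but it (together with the inf-sup condition \cref{eq:inf-sup-bp}) is the natural tool to recover $\boldsymbol{p}_h^p$ fully — here it is already zero. It remains to recover the pressures $\boldsymbol{p}_h$ and the traces. Since $\boldsymbol{u}_h$, $a_h^I$-argument, $b_h^I$, $t_h$, and the data all vanish, the momentum equation \cref{eq:hdgfd-a} reduces to $b_h(\boldsymbol{v}_h,\boldsymbol{p}_h)=0$ for all $\boldsymbol{v}_h\in\boldsymbol{V}_h$; restricting to $\boldsymbol{v}_h\in\widehat{\boldsymbol{V}}_h$ and invoking the inf-sup condition \cref{eq:inf-sup-b} gives $\tnorm{\boldsymbol{p}_h}_q=0$, hence $\boldsymbol{p}_h=0$. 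Similarly, using \cref{eq:hdgfd-d} with the inf-sup condition \cref{eq:inf-sup-bp} (or \cref{eq:inf-sup-bj} for $j=b$) and $z_h=0$ recovers $\bar p_h^{p}=0$ (already known), and the displacement trace $\bar u_h^b$ and velocity trace $\bar u_h^f$ are pinned down by the jump/tangential conditions together with the definitions of the norms and the fact that $\tnorm{\cdot}_{v,j}=0$ forces $v_h^j=\bar v_h^j$ on each $\partial K$. The \textbf{main obstacle} is the bookkeeping in the energy combination: one must choose the test-function weights so that all three coupling terms ($b_h$, $b_h^I$, $c_h$) cancel simultaneously while the storage/compressibility block $c_0\partial_t p_h^p + c_h(\partial_t(\cdot),\alpha\,\cdot)$ remains sign-definite — this is where the $\lambda$-scaling and the $\alpha\in(0,1)$ structure enter, and it must be done carefully so the smallness condition on $u_h^{f,n}\cdot n$ is the only assumption required (no smallness on the data is needed for uniqueness, only for existence).
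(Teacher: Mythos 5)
Your proposal is correct and follows essentially the same route as the paper: form the difference of two solutions, test with the same weighted choices so the $b_h$, $b_h^I$, and $c_h$ couplings cancel, invoke Proposition~\ref{prop:coercivitythahf} under the smallness hypothesis to make the energy identity sign-definite, and then recover the pressures from the inf-sup conditions. The only cosmetic difference is that you recover $\boldsymbol{p}_h$ via \cref{eq:inf-sup-b} on $\widehat{\boldsymbol{V}}_h$ while the paper uses \cref{eq:inf-sup-bj} with test functions vanishing on $\Gamma_I$; both are valid.
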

\begin{proof}
  Assume that both
  $(\boldsymbol{u}_h^{n+1}, \boldsymbol{p}_h^{n+1}, z_h^{n+1},
  \boldsymbol{p}_h^{p,n+1})$ and
  $(\widehat{\boldsymbol{u}}_h^{n+1},
  \widehat{\boldsymbol{p}}_h^{n+1}, \widehat{z}_h^{n+1},
  \widehat{\boldsymbol{p}}_h^{p,n+1})$ are solutions to the fully
  discrete system \cref{eq:hdgfd}. Let us define their difference by
  $(\boldsymbol{x}_h^{n+1}, \boldsymbol{r}_h^{n+1}, y_h^{n+1},
  \boldsymbol{r}_h^{p,n+1}) =
  (\boldsymbol{u}_h^{n+1}-\widehat{\boldsymbol{u}}_h^{n+1},
  \boldsymbol{p}_h^{n+1}-\widehat{\boldsymbol{p}}_h^{n+1},
  z_h^{n+1}-\widehat{z}_h^{n+1},
  \boldsymbol{p}_h^{p,n+1}-\widehat{\boldsymbol{p}}_h^{p,n+1})$. We
  need to show that
  $(\boldsymbol{x}_h^{n+1}, \boldsymbol{r}_h^{n+1}, y_h^{n+1},
  \boldsymbol{r}_h^{p,n+1}) = (\boldsymbol{0}, \boldsymbol{0}, 0,
  \boldsymbol{0})$. Let us first note that
  $(\boldsymbol{x}_h^{n+1}, \boldsymbol{r}_h^{n+1}, y_h^{n+1},
  \boldsymbol{r}_h^{p,n+1})$ satisfies
  \begin{subequations}
    \begin{align}
      \label{eq:difeq-a}
      & \frac{1}{\Delta t}(x_h^{n+1}, v_h^f)_{\Omega^f}
        + t_h(u_h^{f,n}; \boldsymbol{x}_h^{f,n+1}, \boldsymbol{v}_h^f)
        + a_h^f(\boldsymbol{x}_h^{f,n+1}, \boldsymbol{v}_h^f)
        + a_h^b(\boldsymbol{x}_h^{b,n+1}, \boldsymbol{v}_h^b)
        + b_h^f(\boldsymbol{v}_h^f, \boldsymbol{r}_h^{f,n+1})
      \\ \nonumber
      & \hspace{10em}        
        + b_h^b(\boldsymbol{v}_h^b, \boldsymbol{r}_h^{b,n+1})
        + a_h^I((\bar{x}_h^{f,n+1},\frac{1}{\Delta t}\bar{x}_h^{b,n+1}),(\bar{v}_h^f,\bar{v}_h^b))
        + b_h^I((\bar{v}_h^f,\bar{v}_h^b), \bar{r}_h^{p,n+1})
        = 0,
      \\
      \label{eq:difeq-b}
      & b_h^f(\boldsymbol{x}_h^{f,n+1}, \boldsymbol{q}_h^f)
        + b_h^b(\boldsymbol{x}_h^{b,n+1}, \boldsymbol{q}_h^b)
        + c_h((r_h^{p,n+1},r_h^{b,n+1}),q_h^b)
        = 0,
      \\
      \label{eq:difeq-c}
      &\frac{1}{\Delta t}(c_0r_h^{p,n+1}, q_h^p)_{\Omega^b}
        + \frac{1}{\Delta t}c_h((r_h^{p,n+1},r_h^{b,n+1}), \alpha q_h^p)
        - b_h^{b}((y_h^{n+1}, 0), \boldsymbol{q}_h^p)      
        - b_h^I((\bar{x}_h^{f,n+1}, \frac{1}{\Delta t}\bar{x}_h^{b,n+1}), \bar{q}_h^p)
        = 0,
      \\
      \label{eq:difeq-d}
      &(\mu^f\kappa^{-1} y_h^{n+1}, w_h)_{\Omega^b} + b_h^{b}((w_h, 0), \boldsymbol{r}_h^{p,n+1})=0,
    \end{align}
  \end{subequations}
  for all
  $(\boldsymbol{v}_h, \boldsymbol{q}_h, w_h, \boldsymbol{q}_h^p) \in
  \boldsymbol{X}_h$. Add the above equations, choose
  $\boldsymbol{v}_h^f = \boldsymbol{x}_h^{f,n+1}$,
  $\boldsymbol{v}_h^b = \tfrac{1}{\Delta t}\boldsymbol{x}_h^{b,n+1}$,
  $\boldsymbol{q}_h^f = -\boldsymbol{r}_h^{f,n+1}$,
  $\boldsymbol{q}_h^b = -\tfrac{1}{\Delta t}\boldsymbol{r}_h^{b,n+1}$,
  $w_h = y_h^{n+1}$, and
  $\boldsymbol{q}_h^p = \boldsymbol{r}^{p,n+1}$. By the smallness
  assumption on $\norm[0]{u_h^{f,n} \cdot n}_{\Gamma_{IN}^f}$ we find,
  using Proposition \ref{prop:coercivitythahf} and
  \cref{eq:ah-coercive-j}, that
  \begin{multline*}
    \tfrac{1}{\Delta t}\norm[0]{x_h^{f,n+1}}_{\Omega^f}^2
    + \tfrac{1}{2}c_{ae}^f\mu^f\tnorm{\boldsymbol{x}_h^{f,n+1}}_{v,f}^2    
    + \tfrac{1}{\Delta t}c_{ae}^b\mu^b\tnorm{\boldsymbol{x}_h^{b,n+1}}_{v,b}^2
    + \gamma\mu^f\kappa^{-1/2}\norm[0]{ (\bar{x}_h^{f,n+1} - \tfrac{1}{\Delta t}\bar{x}_h^{b,n+1})^t }_{\Gamma_I}^2
    \\
    + \tfrac{1}{\Delta t}\lambda^{-1}\norm[0]{\alpha r_h^{p,n+1} - r_h^{b,n+1}}_{\Omega^b}^2
    + \tfrac{c_0}{\Delta t}\norm[0]{r_h^{p,n+1}}_{\Omega^b}^2
    + \mu^f\kappa^{-1}\norm[0]{y_h^{n+1}}_{\Omega^b}^2
    \le 0,    
  \end{multline*}
  so that $\boldsymbol{x}_h^{n+1}=\boldsymbol{0}$ and
  $y_h^{n+1}=0$. We are left to show that $\boldsymbol{r}_h^{n+1}$ and
  $\boldsymbol{r}_h^{p,n+1}$ are zero. To show that
  $\boldsymbol{r}_h^{n+1} = \boldsymbol{0}$, substitute
  $\boldsymbol{x}_h^{n+1}=\boldsymbol{0}$ into \cref{eq:difeq-a} and
  choose $\bar{v}_h^f=\bar{v}_h^b=0$ on $\Gamma_I$ to find
  $b_h^j(\boldsymbol{v}_h^j, \boldsymbol{r}_h^{j,n+1})=0$
  $\forall \boldsymbol{v}_h^j \in \widetilde{V}_h^j$, $j=f,b$. The
  result follows by the inf-sup condition
  \cref{eq:inf-sup-bj}. Similarly, to show that
  $\boldsymbol{r}_h^{p,n+1} = \boldsymbol{0}$, substitute
  $y_h^{n+1}=0$ into \cref{eq:difeq-d} to find
  $b_h^b((w_h,0), \boldsymbol{r}_h^{p,n+1})=0$
  $\forall w_h \in V_h^b$. The result follows by the inf-sup condition
  \cref{eq:inf-sup-bp}.
\end{proof}

The goal of the remainder of this section is to show that the
smallness assumption in Lemma \ref{lem:uniqueness} holds. For this, it
will be useful to define, for $0 \le n \le N-1$,
\begin{equation}
  \label{eq:defXn}
  X^n :=
  \norm[0]{d_t u_h^{f,n+1}}_{\Omega^f}^2
  + \tfrac{1}{2}a_h^b(d_t \boldsymbol{u}_h^{b,n+1}, d_t \boldsymbol{u}_h^{b,n+1})
  + \lambda^{-1}\norm[0]{\alpha d_t p_h^{p,n+1} - d_t p_h^{b,n+1}}_{\Omega^b}^2
  + c_0\norm[0]{d_t p_h^{p,n+1}}_{\Omega^b}^2,
\end{equation}
and for $1 \le m \le N-1$,
\begin{multline}
  \label{eq:defFm}
    F^m := \frac{4c_p^2}{c_{ae}^f\mu^f}\Delta t\sum_{k=1}^m\norm[0]{d_t f^{f,k+1}}_{\Omega^f}^2
    + \frac{2c_{td}^2\mu^f}{\kappa}\Delta t\sum_{k=1}^m\norm[0]{d_t g^{b,k+1}}_{\Omega^b}^2 
    \\ 
    + 2 c_p^2(c_{ac}^b \mu^b)^{-1} (\max_{1\le k \le m}\norm[0]{d_t f^{b,k+1}}_{\Omega^b}
    + \norm[0]{d_t f^{b,2}}_{\Omega^b} + \Delta t\sum_{k=2}^m \norm[0]{d_{tt}f^{b,k+1}}_{\Omega^b} )^2. 
\end{multline}
Furthermore, we define
\begin{equation*}
  F^0:=
  3\norm[0]{f^{f,1}}_{\Omega^f}^2
  + \frac{3c_p^2}{c_{ae}^b\mu^b}\norm[0]{d_tf^{b,1}}_{\Omega^b}^2
  + \frac{3c_{td}^2\mu^f}{\kappa} \Delta t \norm[0]{d_tg^{b,1}}_{\Omega^b}^2.    
\end{equation*}
The following lemma extends \cite[Lemma 5]{Cesmelioglu:2023b} for
Navier--Stokes/Darcy to Navier--Stokes/Biot.

\begin{lemma}
  \label{lem:boundXn}
  Let $\boldsymbol{u}_h^{0} = 0$ and $\boldsymbol{p}_h^{p,0} = 0$
  (then also $\boldsymbol{p}_h^{b,0} = \boldsymbol{0}$ and
  $z_h^0 = 0$) and assume $f^{b,0}=0$ and $g^{b,0}=0$. Let
  $(\boldsymbol{u}_h^{k}, \boldsymbol{p}_h^{k}, z_h^{k},
  \boldsymbol{p}_h^{p,k}) \in \boldsymbol{X}_h$ be the solution to
  \cref{eq:hdgfd} for $1 \le k \le n$. Then
  \begin{subequations}
    \label{eq:boundX0terms}
    \begin{align}
      \label{eq:boundX0terms-a}
      X^0
      & \le F^0,
      \\
      \label{eq:boundX0terms-b}
      c_{ae}^f\mu^f\Delta t\tnorm{d_t\boldsymbol{u}_h^{f,1}}_{v,f}^2
      &\le \tfrac{1}{12}F^0.
    \end{align}
  \end{subequations}
  Furthermore, if for $1 \le n \le N-1$,
  \begin{equation}
    \label{eq:assumpuhfkfirst}
    \tnorm{\boldsymbol{u}_h^{f,k}}_{v,f}
    \le \min\del[2]{ \frac{\mu^f c_{ae}^f}{2c_{si,2}(c_{pq}^2 + c_{si,4}^2)}, \frac{c_{ae}^f\mu^f}{4c_w} }
    \quad \forall 0 \le k \le n,
  \end{equation}
  then
  \begin{equation}
    \label{eq:boundXnterms}
    X^n \le 6X^0
    + \tfrac{1}{2}c_{ae}^f\mu^f\Delta t\tnorm{d_t\boldsymbol{u}_h^{f,1}}_{v,f}^2 + F^n.
  \end{equation}
\end{lemma}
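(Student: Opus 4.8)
The proof splits into the base estimates \cref{eq:boundX0terms} at the first time step and the propagation estimate \cref{eq:boundXnterms}. For \cref{eq:boundX0terms} I would apply \cref{eq:hdgfd} at $n=0$: since every level-$0$ quantity vanishes, the convective form disappears ($t_h(u_h^{f,0};\cdot,\cdot)=0$), so no smallness hypothesis enters here. Testing \cref{eq:hdgfd} at $n=0$ with the same pattern of test functions as in the proof of \cref{lem:uniqueness}, scaled so that $d_t$-quantities appear (element/facet functions proportional to $\boldsymbol{u}_h^{f,1}$, $\boldsymbol{p}_h^{f,1}$ on $\Omega^f$ and to $\tfrac1{\Delta t}\boldsymbol{u}_h^{b,1}$, $\tfrac1{\Delta t}\boldsymbol{p}_h^{b,1}$, $z_h^{1}$, $\boldsymbol{p}_h^{p,1}$ on $\Omega^b$, with a further overall $1/\Delta t$), the $b_h$-, $b_h^I$- and $b_h^b$-couplings cancel exactly as in \cref{lem:uniqueness}, and \cref{eq:ah-coercive-j}, the $c_h$/$c_0$ combination used there, positivity of $a_h^I$, and $\mu^f\kappa^{-1}\norm[0]{z_h^1}_{\Omega^b}^2\ge0$ leave $X^0$ (respectively the full $a_h^f$-dissipation, which is $\ge c_{ae}^f\mu^f(\Delta t)^2\tnorm{d_t\boldsymbol{u}_h^{f,1}}_{v,f}^2$) on the left. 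On the right only the three data pairings $(f^{f,1},d_tu_h^{f,1})_{\Omega^f}$, $(d_tf^{b,1},d_tu_h^{b,1})_{\Omega^b}$ and $(d_tg^{b,1},d_tp_h^{p,1})_{\Omega^b}$ survive; bounding the last via \cref{eq:tnormppbound} (which controls $\norm[0]{d_tp_h^{p,1}}_{\Omega^b}$ by $c_{td}\mu^f\kappa^{-1}\norm[0]{d_tz_h^1}_{\Omega^b}$), the others via \cref{eq:dpoincareineq,eq:dpoincareineq-b}, and then using Cauchy--Schwarz and Young's inequality (splitting into three, whence the factors $3$, $\tfrac1{12}$) gives \cref{eq:boundX0terms-a,eq:boundX0terms-b}.

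For \cref{eq:boundXnterms} the plan is to subtract \cref{eq:hdgfd} at levels $k$ and $k+1$ and divide by $\Delta t$, for $1\le k\le n$, and in addition to difference \cref{eq:hdgfd-b} once more — legitimate for $k\ge1$ because with the zero data the relation in \cref{eq:hdgfd-b} also holds trivially at the initial level — so that the displacement rate can be tested against $d_{tt}\boldsymbol{u}_h^{b,k+1}$ and the $a_h^b$-block telescopes. The differenced convective term is rewritten, using linearity of $t_h$ in its second argument, as
\begin{multline*}
\tfrac{1}{\Delta t}\del[1]{t_h(u_h^{f,k};\boldsymbol{u}_h^{f,k+1},\boldsymbol{v}_h^f)-t_h(u_h^{f,k-1};\boldsymbol{u}_h^{f,k},\boldsymbol{v}_h^f)} \\
= t_h(u_h^{f,k};d_t\boldsymbol{u}_h^{f,k+1},\boldsymbol{v}_h^f)+\tfrac{1}{\Delta t}\del[1]{t_h(u_h^{f,k};\boldsymbol{u}_h^{f,k},\boldsymbol{v}_h^f)-t_h(u_h^{f,k-1};\boldsymbol{u}_h^{f,k},\boldsymbol{v}_h^f)}
\end{multline*}
(at $k=1$ the bracket collapses to $\tfrac1{\Delta t}t_h(u_h^{f,1};\boldsymbol{u}_h^{f,1},\boldsymbol{v}_h^f)$). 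I would then test the differenced system with the test functions of \cref{lem:uniqueness} shifted one time-derivative up: $\boldsymbol{v}_h^f=d_t\boldsymbol{u}_h^{f,k+1}$, $\boldsymbol{v}_h^b=d_{tt}\boldsymbol{u}_h^{b,k+1}$, $\boldsymbol{q}_h^b=-d_t\boldsymbol{p}_h^{b,k+1}$ in the twice-differenced compressibility equation, $w_h=d_tz_h^{k+1}$, $\boldsymbol{q}_h^p=d_t\boldsymbol{p}_h^{p,k+1}$ (the fluid pressure drops out since $b_h^f(d_t\boldsymbol{u}_h^{f,k+1},\cdot)=0=b_h^f(d_{tt}\boldsymbol{u}_h^{f,k+1},\cdot)$ by the discrete divergence-free property). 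All couplings cancel as in \cref{lem:uniqueness}; $t_h(u_h^{f,k};d_t\boldsymbol{u}_h^{f,k+1},d_t\boldsymbol{u}_h^{f,k+1})+a_h^f(d_t\boldsymbol{u}_h^{f,k+1},d_t\boldsymbol{u}_h^{f,k+1})\ge\tfrac12c_{ae}^f\mu^f\tnorm{d_t\boldsymbol{u}_h^{f,k+1}}_{v,f}^2$ by \cref{prop:coercivitythahf}, whose hypothesis $\norm[0]{u_h^{f,k}\cdot n}_{\Gamma_{IN}^f}\le\tfrac12\mu^fc_{ae}^f(c_{pq}^2+c_{si,4}^2)^{-1}$ is exactly what \cref{eq:dtrpoincareineq} with $r=2$ and the first bound of \cref{eq:assumpuhfkfirst} deliver; the remaining convective bracket is bounded, via \cref{eq:upperboundthdif} and $\norm[0]{v_h}_{1,h,\Omega^f}\le\tnorm{\boldsymbol{v}_h}_{v,f}$ from \cref{eq:dpoincareineq}, by $c_w\tnorm{d_t\boldsymbol{u}_h^{f,k}}_{v,f}\tnorm{\boldsymbol{u}_h^{f,k}}_{v,f}\tnorm{d_t\boldsymbol{u}_h^{f,k+1}}_{v,f}$ and, using the second bound of \cref{eq:assumpuhfkfirst} and Young, absorbed into the dissipation at levels $k$ and $k+1$. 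Each remaining left-hand contribution — $(d_{tt}u_h^{f,k+1},d_tu_h^{f,k+1})_{\Omega^f}$, the $c_0$ term, $a_h^b(d_t\boldsymbol{u}_h^{b,k+1},d_{tt}\boldsymbol{u}_h^{b,k+1})$ (via symmetry of $a_h^b$), and the combined $c_h$-block, which equals $(\lambda^{-1}d_tw^{k+1},w^{k+1})_{\Omega^b}$ with $w^{k+1}:=\alpha d_tp_h^{p,k+1}-d_tp_h^{b,k+1}$ — obeys an estimate of the form $(a-b,a)\ge\tfrac12(\norm[0]{a}^2-\norm[0]{b}^2)$, so multiplying by $\Delta t$ and summing over $k=1,\dots,n$ telescopes the left-hand side into a fixed multiple of $X^n-X^0$ plus nonnegative dissipation (discarded after the absorptions), while the uncancelled $k=1$ convective remainder lands on the right as at most $\tfrac12c_{ae}^f\mu^f\Delta t\tnorm{d_t\boldsymbol{u}_h^{f,1}}_{v,f}^2$.

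On the right-hand side, $\sum_k\Delta t(d_tf^{f,k+1},d_tu_h^{f,k+1})_{\Omega^f}$ and $\sum_k\Delta t(d_tg^{b,k+1},d_tp_h^{p,k+1})_{\Omega^b}$ are controlled, via \cref{eq:dpoincareineq} resp.\ \cref{eq:tnormppbound}, by the $\tnorm{d_t\boldsymbol{u}_h^{f,\cdot}}_{v,f}^2$- and $\mu^f\kappa^{-1}\norm[0]{d_tz_h^{\cdot}}_{\Omega^b}^2$-dissipation and Young, giving the first two terms of $F^m$ (and $F^0$-contributions from $k=1$). The delicate term is $\sum_k\Delta t(d_tf^{b,k+1},d_{tt}u_h^{b,k+1})_{\Omega^b}=\sum_{k=1}^n(d_tf^{b,k+1},d_tu_h^{b,k+1}-d_tu_h^{b,k})_{\Omega^b}$: the energy carries no $L^2$-control of $d_tu_h^b$ that can be summed in time (only the $a_h^b$-control at the final level), so this must be summed by parts in $k$, producing $(d_tf^{b,n+1},d_tu_h^{b,n+1})_{\Omega^b}-(d_tf^{b,2},d_tu_h^{b,1})_{\Omega^b}-\Delta t\sum_{k=2}^n(d_{tt}f^{b,k+1},d_tu_h^{b,k})_{\Omega^b}$; bounding the displacement-rate factors in $L^2$ via \cref{eq:dpoincareineq-b}, dominating them by the final-level $a_h^b$-dissipation on the left and by $X^0$ at the early levels, and collecting the three pieces under a single Young's inequality yields exactly the last term of $F^m$ — the maximum from the level-$(n{+}1)$ term, $\norm[0]{d_tf^{b,2}}_{\Omega^b}$ from the level-$2$ term, and $\Delta t\sum_{k=2}^n\norm[0]{d_{tt}f^{b,k+1}}_{\Omega^b}$ from the summation-by-parts remainder. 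Gathering all $X^0$-contributions (from $X^n-X^0$, from the $k=1$ boundary data, and from clearing the $\tfrac12$'s) produces the constant $6$, and \cref{eq:boundXnterms} follows. The main obstacle is precisely this $f^b$-term: the absence of an $L^2$-bound on $d_tu_h^b$ forces the discrete summation by parts, which is why $F^m$ carries $d_{tt}f^{b}$ and a maximum of $\norm[0]{d_tf^{b,\cdot}}$ rather than an $\ell^2$-norm; keeping track of which boundary contributions are absorbed into the single final-level $a_h^b$-dissipation and which land in $F^m$, while simultaneously checking that the convective remainder (handled through \cref{prop:coercivitythahf} and \cref{eq:upperboundthdif} under \cref{eq:assumpuhfkfirst}) leaves exactly the stated $\tfrac12c_{ae}^f\mu^f\Delta t\tnorm{d_t\boldsymbol{u}_h^{f,1}}_{v,f}^2$, is the bookkeeping that requires care.
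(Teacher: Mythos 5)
Your proposal is correct and follows essentially the same route as the paper's proof: testing the $n=0$ system with the scaled solution to get $X^0\le F^0$ and the $\tfrac1{12}F^0$ bound, then differencing consecutive time levels, invoking Proposition~\ref{prop:coercivitythahf} and \cref{eq:upperboundthdif} under \cref{eq:assumpuhfkfirst}, telescoping, and summing the $f^b$ term by parts before closing with the quadratic inequality $X\le A+BX^{1/2}\Rightarrow X\le 2A+B^2$. The only cosmetic differences are that the paper cancels the $b_h^b$/$c_h$ cross terms by substituting the once-differenced \cref{eq:hdgfd-b} into the identified combination $I_1+I_2+I_3$ rather than differencing it twice, and obtains $F^0$ by cancelling a common factor and squaring rather than by Young's inequality; both yield the same constants.
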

\begin{proof}
  Let $n=0$ in \cref{eq:hdgfd}. Use that the initial conditions are
  zero and choose $\boldsymbol{v}_h^f = \boldsymbol{u}_h^{f,1}$,
  $\boldsymbol{v}_h^b = \tfrac{1}{\Delta t}\boldsymbol{u}_h^{b,1}$,
  $\boldsymbol{q}_h^f = - \boldsymbol{p}_h^{f,1}$,
  $\boldsymbol{q}_h^p = \boldsymbol{p}_h^{p,1}$, and $w_h =
  z_h^1$. Furthermore, choose
  $\boldsymbol{q}_h^b = -\tfrac{1}{\Delta t}\boldsymbol{p}_h^{b,1}$
  and note that
  $b_h^b(\boldsymbol{u}_h^{b,1}, -\tfrac{1}{\Delta
    t}\boldsymbol{p}_h^{b,1}) = -b_h^b(\tfrac{1}{\Delta
    t}\boldsymbol{u}_h^{b,1}, \boldsymbol{p}_h^{b,1})$ and
  $c_h((p_h^{p,1},p_h^{b,1}), -\tfrac{1}{\Delta t}p_h^{b,1}) =
  -c_h((\tfrac{1}{\Delta t}p_h^{p,1}, \tfrac{1}{\Delta t}p_h^{b,1}),
  p_h^{b,1})$. \Cref{eq:hdgfd} becomes:
  \begin{multline*}
    \tfrac{1}{\Delta t}\norm[0]{u_h^{f,1}}_{\Omega^f}^2
    + a_h^f(\boldsymbol{u}_h^{f,1}, \boldsymbol{u}_h^{f,1})
    + a_h^b(\boldsymbol{u}_h^{b,1}, \tfrac{1}{\Delta t}\boldsymbol{u}_h^{b,1})
    + \gamma \mu^f\kappa^{-1/2}\norm[0]{ (\bar{u}_h^{f,1}-\tfrac{1}{\Delta t}\bar{u}_h^{b,1})^t }_{\Gamma_I}^2
    + \tfrac{\lambda^{-1}}{\Delta t} \norm[0]{\alpha p_h^{p,1} - p_h^{b,1}}_{\Omega^b}^2
    \\
    + \tfrac{c_0}{\Delta t} \norm[0]{p_h^{p,1}}_{\Omega^b}^2
    + \mu^f\kappa^{-1} \norm[0]{z_h^{1}}_{\Omega^b}^2
    =
    (f^{f,1}, u_h^{f,1})_{\Omega^f} + (f^{b,1}, \tfrac{1}{\Delta t}u_h^{b,1})_{\Omega^b}
    + (g^{b,1}, p_h^{p,1})_{\Omega^b}.    
  \end{multline*}
  Coercivity of $a_h^f$ (see \cref{eq:ah-coercive-j}), the
  Cauchy--Schwarz inequality, and using \cref{eq:tnormppbound} so that
  $\norm[0]{p_h^{p,1}}_{\Omega^b} \le
  c_{td}\mu^f\kappa^{-1}\norm[0]{z_h^{1}}_{\Omega^b}$, we obtain
  \begin{multline}
    \label{eq:uhf1termubf1}
    \tfrac{1}{\Delta t}\norm[0]{u_h^{f,1}}_{\Omega^f}^2
    + c_{ae}^f\mu^f\tnorm{\boldsymbol{u}_h^{f,1}}_{v,f}^2
    + \tfrac{1}{\Delta t}a_h^b(\boldsymbol{u}_h^{b,1}, \boldsymbol{u}_h^{b,1})
    + \gamma \mu^f\kappa^{-1/2}\norm[0]{ (\bar{u}_h^{f,1}-\tfrac{1}{\Delta t}\bar{u}_h^{b,1})^t }_{\Gamma_I}^2
    + \tfrac{\lambda^{-1}}{\Delta t} \norm[0]{\alpha p_h^{p,1} - p_h^{b,1}}_{\Omega^b}^2
    \\
    + \tfrac{c_0}{\Delta t} \norm[0]{p_h^{p,1}}_{\Omega^b}^2
    + \mu^f\kappa^{-1} \norm[0]{z_h^{1}}_{\Omega^b}^2
    \le
    \norm[0]{f^{f,1}}_{\Omega^f}\norm[0]{u_h^{f,1}}_{\Omega^f} + \tfrac{1}{\Delta t}\norm[0]{f^{b,1}}_{\Omega^b}\norm[0]{u_h^{b,1}}_{\Omega^b}
    + c_{td}\mu^f\kappa^{-1}\norm[0]{g^{b,1}}_{\Omega^b}\norm[0]{z_h^{1}}_{\Omega^b}.
  \end{multline}
  Using \cref{eq:dpoincareineq-b} and \cref{eq:ah-coercive-j} so that
  \begin{equation}
  \label{eq:uhb1-ahb}
  \norm[0]{u_h^{b,1}}_{\Omega^b} \le
    c_p\tnorm{\boldsymbol{u}_h^{b,1}}_{v,b} \le c_p
     (c_{ae}^b\mu^b)^{-1/2}a_h^b(\boldsymbol{u}_h^{b,1},\boldsymbol{u}_h^{b,1})^{1/2},
  \end{equation}
  and noting the nonnegativity of the second and the fourth terms of
  \cref{eq:uhf1termubf1}, we further have
  \begin{multline}
    \label{eq:uf1leterm}
    \tfrac{1}{\Delta t}\norm[0]{u_h^{f,1}}_{\Omega^f}^2
    + \tfrac{1}{\Delta t}a_h^b(\boldsymbol{u}_h^{b,1}, \boldsymbol{u}_h^{b,1})
    + \tfrac{\lambda^{-1}}{\Delta t} \norm[0]{\alpha p_h^{p,1} - p_h^{b,1}}_{\Omega^b}^2
    + \tfrac{c_0}{\Delta t} \norm[0]{p_h^{p,1}}_{\Omega^b}^2
    + \mu^f\kappa^{-1} \norm[0]{z_h^{1}}_{\Omega^b}^2
    \\
    \le
    \norm[0]{f^{f,1}}_{\Omega^f}\norm[0]{u_h^{f,1}}_{\Omega^f}
    + \tfrac{1}{\Delta t}c_p (c_{ae}^b\mu^b)^{-1/2}\norm[0]{f^{b,1}}_{\Omega^b}a_h^b(\boldsymbol{u}_h^{b,1},\boldsymbol{u}_h^{b,1})^{1/2}
    + c_{td}\mu^f\kappa^{-1}\norm[0]{g^{b,1}}_{\Omega^b}\norm[0]{z_h^{1}}_{\Omega^b}.    
  \end{multline}
  Let us define  
  \begin{equation*}
    \mathcal{Z}^2 :=
    \norm[0]{u_h^{f,1}}_{\Omega^f}^2
    + a_h^b(\boldsymbol{u}_h^{b,1}, \boldsymbol{u}_h^{b,1})
    + \lambda^{-1}\norm[0]{\alpha p_h^{p,1} - p_h^{b,1}}_{\Omega^b}^2
    + c_0\norm[0]{p_h^{p,1}}_{\Omega^b}^2
    + \Delta t\mu^f\kappa^{-1} \norm[0]{z_h^{1}}_{\Omega^b}^2
  \end{equation*}
  and write \cref{eq:uf1leterm} as:
  \begin{equation*}
    \tfrac{1}{\Delta t}\mathcal{Z}^2 \le \big(
    \norm[0]{f^{f,1}}_{\Omega^f}
    + \tfrac{1}{\Delta t}c_p (c_{ae}^b\mu^b)^{-1/2}\norm[0]{f^{b,1}}_{\Omega^b}
    + c_{td}\tfrac{1}{(\Delta t)^{1/2}}(\mu^f\kappa^{-1})^{1/2}\norm[0]{g^{b,1}}_{\Omega^b}\big)\mathcal{Z}.
  \end{equation*}
  This immediately implies, using that
  $(X^0)^{1/2} \le \tfrac{1}{\Delta t}\mathcal{Z}$ and that
  $\tfrac{1}{\Delta t}\norm[0]{f^{b,1}}_{\Omega^b} =
  \norm[0]{d_tf^{b,1}}_{\Omega^b}$ and
  $ \tfrac{1}{(\Delta t)^{1/2}}\norm[0]{g^{b,1}}_{\Omega^b} = (\Delta
  t)^{1/2}\norm[0]{d_tg^{b,1}}_{\Omega^b}$ because $f^{b,0}=0$ and
  $g^{b,0}=0$,
  \begin{equation*}
    (X^0)^{1/2} \le
    \norm[0]{f^{f,1}}_{\Omega^f}
    + c_p (c_{ae}^b\mu^b)^{-1/2}\norm[0]{d_tf^{b,1}}_{\Omega^b}
    + c_{td}(\Delta t)^{1/2}(\mu^f\kappa^{-1})^{1/2}\norm[0]{d_tg^{b,1}}_{\Omega^b},
  \end{equation*}
  so that \cref{eq:boundX0terms-a} follows after squaring.

  To prove \cref{eq:boundX0terms-b} we return to
  \cref{eq:uhf1termubf1}. Applying Young's inequality
  $ab \le a^2/(2\psi) + \psi b^2/2$ to each term on the right hand
  side, using \cref{eq:uhb1-ahb} for the second term, choosing
  $\psi = 2/\Delta t$, $\psi=2/\Delta t$, and
  $\psi = 2\mu^f\kappa^{-1}$ for the first, second, and third terms,
  respectively, and dividing both sides by $\Delta t$, we obtain,
  using that $f^{b,0}=0$ and $g^{b,0}=0$,
  \begin{equation*}
    c_{ae}^f\mu^f\Delta t\tnorm{d_t\boldsymbol{u}_h^{f,1}}_{v,f}^2
    \le
    \tfrac{1}{4}\norm[0]{f^{f,1}}_{\Omega^f}^2
    + \tfrac{1}{4}c_p^2(c_{ae}^b\mu^b)^{-1}\norm[0]{d_tf^{b,1}}_{\Omega^b}^2
    + \tfrac{1}{4}c_{td}^2\mu^f\kappa^{-1}\Delta t\norm[0]{d_tg^{b,1}}_{\Omega^b}^2.
  \end{equation*}
  This proves \cref{eq:boundX0terms-b}.
  
  We now prove \cref{eq:boundXnterms}. Let $1 \le n \le N-1$. Subtract
  \cref{eq:hdgfd} for the solution at time-level $t_n$ from
  \cref{eq:hdgfd} for the solution at time-level $t_{n+1}$, choose
  $\boldsymbol{v}_h^f = \delta \boldsymbol{u}_h^{f,n+1}$,
  $\boldsymbol{v}_h^b = \tfrac{1}{\Delta t}(\delta
  \boldsymbol{u}_h^{b,n+1}-\delta \boldsymbol{u}_h^{b,n})$,
  $\boldsymbol{q}_h^f=-\delta \boldsymbol{p}_h^{f,n+1}$,
  $\boldsymbol{q}_h^b = -\tfrac{1}{\Delta
    t}\delta\boldsymbol{p}_h^{b,n+1}$, $w_h = \delta z_h^{n+1}$,
  $\boldsymbol{q}_h^p = \delta\boldsymbol{p}_h^{p,n+1}$ and add the
  resulting equations:
  \begin{equation}
    \label{eq:afteraddingequationswithtfs}
    \begin{split}
      & \tfrac{1}{\Delta t}(\delta u_h^{f,n+1} - \delta u_h^{f,n}, \delta u_h^{f,n+1})_{\Omega^f}
      + t_h(u_h^{f,n}; \boldsymbol{u}_h^{f,n+1}, \delta \boldsymbol{u}_h^{f,n+1})
      - t_h(u_h^{f,n-1}; \boldsymbol{u}_h^{f,n}, \delta \boldsymbol{u}_h^{f,n+1})
      \\
      & \hspace{2em}
      + a_h^f(\delta \boldsymbol{u}_h^{f,n+1}, \delta \boldsymbol{u}_h^{f,n+1})
      + a_h^b(\delta \boldsymbol{u}_h^{b,n+1}, \tfrac{1}{\Delta t}(\delta \boldsymbol{u}_h^{b,n+1}-\delta \boldsymbol{u}_h^{b,n}))
      \\
      & \hspace{2em}
      + b_h^b(\tfrac{1}{\Delta t}(\delta \boldsymbol{u}_h^{b,n+1}-\delta \boldsymbol{u}_h^{b,n}), \delta \boldsymbol{p}_h^{b,n+1})
      - b_h^b(\delta \boldsymbol{u}_h^{b,n+1}, \tfrac{1}{\Delta t}\delta\boldsymbol{p}_h^{b,n+1})
      \\
      & \hspace{2em}
      + a_h^I((\delta \bar{u}_h^{f,n+1}, \tfrac{1}{\Delta t}(\delta \bar{u}_h^{b,n+1}- \delta \bar{u}_h^{b,n})),
      (\delta\bar{u}_h^{f,n+1}, \tfrac{1}{\Delta t}(\delta \bar{u}_h^{b,n+1}-\delta \bar{u}_h^{b,n})))
      \\
      & - c_h((\delta p_h^{p,n+1}, \delta p_h^{b,n+1}), \tfrac{1}{\Delta t}\delta p_h^{b,n+1})
      + \tfrac{1}{\Delta t} c_h((\delta p_h^{p,n+1} - \delta p_h^{p,n}, \delta p_h^{b,n+1} - \delta p_h^{b,n}), \alpha \delta p_h^{p,n+1})
      \\
      & + \tfrac{c_0}{\Delta t}(\delta p_h^{p,n+1} - \delta p_h^{p,n}, \delta p_h^{p,n+1})_{\Omega^b}      
      + (\mu^f\kappa^{-1}\delta z_h^{n+1}, \delta z_h^{n+1})_{\Omega^b} 
      \\
      & = (\delta f^{f,n+1}, \delta u_h^{f,n+1})_{\Omega^f}
      + (\delta f^{b,n+1}, \tfrac{1}{\Delta t}(\delta u_h^{b,n+1} - \delta u_h^{b,n}))_{\Omega^b}
      + (\delta g^{b,n+1}, \delta p_h^{p,n+1})_{\Omega^b}.      
    \end{split}
  \end{equation}
  We simplify next the sum of the $6^{\rm th}$, $7^{\rm th}$,
  $9^{\rm th}$, and $10^{\rm th}$ terms on the left hand side of
  \cref{eq:afteraddingequationswithtfs} as follows. First,
  \begin{multline*}
    \sbr[0]{b_h^b(\tfrac{1}{\Delta t}(\delta \boldsymbol{u}_h^{b,n+1}-\delta \boldsymbol{u}_h^{b,n}), \delta \boldsymbol{p}_h^{b,n+1})
    - b_h^b(\delta \boldsymbol{u}_h^{b,n+1}, \tfrac{1}{\Delta t}\delta\boldsymbol{p}_h^{b,n+1})}
    - c_h((\delta p_h^{p,n+1}, \delta p_h^{b,n+1}), \tfrac{1}{\Delta t}\delta p_h^{b,n+1})
    \\
    + \tfrac{1}{\Delta t} c_h((\delta p_h^{p,n+1} - \delta p_h^{p,n}, \delta p_h^{b,n+1} - \delta p_h^{b,n}),
    \alpha \delta p_h^{p,n+1}) =: I_1+I_2+I_3.    
  \end{multline*}
  Note that 
  \begin{equation*}
    \begin{split}
      I_1 =
      b_h^b(\tfrac{1}{\Delta t}\delta \boldsymbol{u}_h^{b,n+1}, \delta \boldsymbol{p}_h^{b,n+1})
      - b_h^b(\tfrac{1}{\Delta t}\delta \boldsymbol{u}_h^{b,n}, \delta \boldsymbol{p}_h^{b,n+1})
      - b_h^b(\delta \boldsymbol{u}_h^{b,n+1}, \tfrac{1}{\Delta t}\delta\boldsymbol{p}_h^{b,n+1})
      =&- b_h^b(\tfrac{1}{\Delta t}\delta \boldsymbol{u}_h^{b,n}, \delta \boldsymbol{p}_h^{b,n+1})
      \\
      =& c_h((\delta p_h^{p,n}, \delta p_h^{b,n}), \tfrac{1}{\Delta t} \delta p_h^{b,n+1}),
    \end{split}
  \end{equation*}
  where the last equality follows by subtracting \cref{eq:hdgfd-b} at
  time-level $t_n$ from \cref{eq:hdgfd-b} at time-level $t_{n+1}$ and
  choosing $\boldsymbol{q}_h^f=\boldsymbol{0}$,
  $\boldsymbol{q}_h^b = \tfrac{1}{\Delta t} \delta
  \boldsymbol{p}_h^{b,n+1}$. We immediately find that
  \begin{equation*}
    I_1 + I_2 +I_3
    = \tfrac{1}{\Delta t} c_h((\delta p_h^{p,n+1} - \delta p_h^{p,n}, \delta p_h^{b,n+1} - \delta p_h^{b,n}),
    \alpha \delta p_h^{p,n+1} - \delta p_h^{b,n+1}).
  \end{equation*}
  Noting also that
  $t_h(u_h^{f,n}; \boldsymbol{u}_h^{f,n+1}, \delta
  \boldsymbol{u}_h^{f,n+1}) = t_h(u_h^{f,n}; \delta
  \boldsymbol{u}_h^{f,n+1}, \delta \boldsymbol{u}_h^{f,n+1}) +
  t_h(u_h^{f,n}; \boldsymbol{u}_h^{f,n}, \delta
  \boldsymbol{u}_h^{f,n+1})$, we write
  \cref{eq:afteraddingequationswithtfs} as:
  \begin{equation}
    \label{eq:1dtdeltauhfnp1}
    \begin{split}
      \tfrac{1}{\Delta t}&(\delta u_h^{f,n+1} - \delta u_h^{f,n}, \delta u_h^{f,n+1})_{\Omega^f}
      + t_h(u_h^{f,n}; \delta \boldsymbol{u}_h^{f,n+1}, \delta \boldsymbol{u}_h^{f,n+1})
      + a_h^f(\delta \boldsymbol{u}_h^{f,n+1}, \delta \boldsymbol{u}_h^{f,n+1})
      \\
      &
      + a_h^b(\delta \boldsymbol{u}_h^{b,n+1}, \tfrac{1}{\Delta t}(\delta \boldsymbol{u}_h^{b,n+1}-\delta \boldsymbol{u}_h^{b,n}))
      \\
      & + a_h^I((\delta \bar{u}_h^{f,n+1}, \tfrac{1}{\Delta t}(\delta \bar{u}_h^{b,n+1}- \delta \bar{u}_h^{b,n})),
      (\delta\bar{u}_h^{f,n+1}, \tfrac{1}{\Delta t}(\delta \bar{u}_h^{b,n+1}-\delta \bar{u}_h^{b,n})))
      \\
      & + \tfrac{1}{\Delta t} c_h((\delta p_h^{p,n+1} - \delta p_h^{p,n}, \delta p_h^{b,n+1} - \delta p_h^{b,n}), \alpha \delta p_h^{p,n+1} - \delta p_h^{b,n+1})
      \\
      & + \tfrac{c_0}{\Delta t}(\delta p_h^{p,n+1} - \delta p_h^{p,n}, \delta p_h^{p,n+1})_{\Omega^b}      
      + (\mu^f\kappa^{-1}\delta z_h^{n+1}, \delta z_h^{n+1})_{\Omega^b} 
      \\
      =& (\delta f^{f,n+1}, \delta u_h^{f,n+1})_{\Omega^f} + (\delta f^{b,n+1}, \tfrac{1}{\Delta t}(\delta u_h^{b,n+1} - \delta u_h^{b,n}))_{\Omega^b} + (\delta g^{b,n+1}, \delta p_h^{p,n+1})_{\Omega^b}
      \\
      &-\sbr[1]{t_h(u_h^{f,n}; \boldsymbol{u}_h^{f,n}, \delta \boldsymbol{u}_h^{f,n+1})
        - t_h(u_h^{f,n-1}; \boldsymbol{u}_h^{f,n}, \delta \boldsymbol{u}_h^{f,n+1})}.
    \end{split}
  \end{equation}
  For the left hand side of \cref{eq:1dtdeltauhfnp1}, by assumption
  \cref{eq:assumpuhfkfirst} and \cref{eq:dtrpoincareineq},
  $\norm[0]{u_h^{f,n}}_{\Gamma_{IN}^f} \le c_{si,2}
  \tnorm{\boldsymbol{u}_h^{f,n}}_{v,f} \le \mu^f c_{ae}^f/(2(c_{pq}^2
  + c_{si,4}^2))$. Therefore, using \cref{eq:coercivityahthresult},
  \begin{equation*}
    t_h(u_h^{f,n}; \delta \boldsymbol{u}_h^{f,n+1}, \delta \boldsymbol{u}_h^{f,n+1})
    + a_h^f(\delta \boldsymbol{u}_h^{f,n+1}, \delta \boldsymbol{u}_h^{f,n+1})
    \ge
    \tfrac{1}{2}c_{ae}^f\mu^f\tnorm{\delta \boldsymbol{u}_h^{f,n+1}}_{v,f}^2.
  \end{equation*}
  For the right hand side of \cref{eq:1dtdeltauhfnp1} (see also
  \cite[Proof of Lemma 4.5]{Cesmelioglu:2023b}):
  \begin{equation*}
    \begin{split}
      |t_h(u_h^{f,n}; \boldsymbol{u}_h^{f,n}, \delta \boldsymbol{u}_h^{f,n+1})
      - t_h(u_h^{f,n-1}; \boldsymbol{u}_h^{f,n}, \delta \boldsymbol{u}_h^{f,n+1})|
    \le& c_w \norm[0]{u_h^{f,n} - u_h^{f,n-1}}_{1,h,\Omega^f} \tnorm{\boldsymbol{u}_h^{f,n}}_{v,f} \tnorm{\delta \boldsymbol{u}_h^{f,n+1}}_{v,f}
    \\
    =& c_w \norm[0]{\delta u_h^{f,n}}_{1,h,\Omega^f} \tnorm{\boldsymbol{u}_h^{f,n}}_{v,f} \tnorm{\delta \boldsymbol{u}_h^{f,n+1}}_{v,f}
    \\
    \le& c_w \tnorm{\delta \boldsymbol{u}_h^{f,n}}_{v,f} \tnorm{\boldsymbol{u}_h^{f,n}}_{v,f} \tnorm{\delta \boldsymbol{u}_h^{f,n+1}}_{v,f}.
    \end{split}
  \end{equation*}
  Applying also the Cauchy--Schwarz inequality to the terms on the
  right hand side of \cref{eq:1dtdeltauhfnp1} and using
  $\norm[0]{\delta u_h^{f,n+1}}_{\Omega^f} \le c_p \tnorm{\delta
    \boldsymbol{u}_h^{f,n+1}}_{v,f}$ (by \cref{eq:dpoincareineq}), and
  $\norm[0]{\delta p_h^{p,n+1}}_{\Omega^b} \le
  c_{td}\mu^f\kappa^{-1}\norm[0]{\delta z_h^{n+1}}_{\Omega^b}$ (by a
  simple modification of the proof to \cref{eq:tnormppbound}), we
  obtain
  \begin{equation}
    \label{eq:duhfnp1duhfnterms}
    \begin{split}
      \tfrac{1}{\Delta t}&(\delta u_h^{f,n+1} - \delta u_h^{f,n}, \delta u_h^{f,n+1})_{\Omega^f}
      + \tfrac{1}{\Delta t} a_h^b(\delta \boldsymbol{u}_h^{b,n+1}, \delta \boldsymbol{u}_h^{b,n+1}-\delta \boldsymbol{u}_h^{b,n})
      \\
      & + \tfrac{1}{\Delta t} c_h((\delta p_h^{p,n+1} - \delta p_h^{p,n}, \delta p_h^{b,n+1} - \delta p_h^{b,n}), \alpha \delta p_h^{p,n+1} - \delta p_h^{b,n+1})
      \\
      & + \tfrac{c_0}{\Delta t}(\delta p_h^{p,n+1} - \delta p_h^{p,n}, \delta p_h^{p,n+1})_{\Omega^b}
      + \tfrac{1}{2}c_{ae}^f\mu^f\tnorm{\delta \boldsymbol{u}_h^{f,n+1}}_{v,f}^2
      \\
      &+ \mu^f\kappa^{-1}\norm[0]{\delta z_h^{n+1}}_{\Omega^b}^2
      + \gamma \mu^f\kappa^{-1/2} \norm[0]{ (\delta \bar{u}_h^{f,n+1} - \tfrac{1}{\Delta t}(\delta \bar{u}_h^{b,n+1}- \delta \bar{u}_h^{b,n})^t}_{\Gamma_I}^2
      \\
      \le& c_p \norm[0]{\delta f^{f,n+1}}_{\Omega^f}\tnorm{\delta \boldsymbol{u}_h^{f,n+1}}_{v,f}
      + \tfrac{1}{\Delta t}(\delta f^{b,n+1}, \delta u_h^{b,n+1} - \delta u_h^{b,n})_{\Omega^b}
      \\
      &  + c_{td}\mu^f\kappa^{-1}\norm[0]{\delta g^{b,n+1}}_{\Omega^b}\norm[0]{\delta z_h^{n+1}}_{\Omega^b}
      + c_w \tnorm{\delta \boldsymbol{u}_h^{f,n}}_{v,f} \tnorm{\boldsymbol{u}_h^{f,n}}_{v,f} \tnorm{\delta \boldsymbol{u}_h^{f,n+1}}_{v,f}.
    \end{split}
  \end{equation}
  Apply Young's inequality $ab \le a^2/(2\psi) + \psi b^2/2$ to the
  first, third, and fourth terms on the right hand side of
  \cref{eq:duhfnp1duhfnterms}, choose
  $\psi = \tfrac{1}{2}c_{ae}^f\mu^f$, $\psi = \mu^f\kappa^{-1}$, and
  $\psi = 1$ for the first, third, and fourth terms, respectively, and
  note that by assumption \cref{eq:assumpuhfkfirst} that
  $\tfrac{1}{2}c_{ae}^f\mu^f - c_w
  \tnorm{\boldsymbol{u}_h^{f,n}}_{v,f} \ge
  \tfrac{1}{4}c_{ae}^f\mu^f$. We find from \cref{eq:duhfnp1duhfnterms}:
  \begin{equation}
    \label{eq:duhfnp1duhfnterms2}
    \begin{split}
      \tfrac{1}{\Delta t} &(\delta u_h^{f,n+1} - \delta u_h^{f,n}, \delta u_h^{f,n+1})_{\Omega^f}
      + \tfrac{1}{\Delta t} a_h^b(\delta \boldsymbol{u}_h^{b,n+1}, \delta \boldsymbol{u}_h^{b,n+1}-\delta \boldsymbol{u}_h^{b,n})
      \\
      & + \tfrac{1}{\Delta t} c_h((\delta p_h^{p,n+1} - \delta p_h^{p,n}, \delta p_h^{b,n+1} - \delta p_h^{b,n}), \alpha \delta p_h^{p,n+1} - \delta p_h^{b,n+1})
      \\
      & + \tfrac{c_0}{\Delta t}(\delta p_h^{p,n+1} - \delta p_h^{p,n}, \delta p_h^{p,n+1})_{\Omega^b}
      + \tfrac{1}{8}c_{ae}^f\mu^f\tnorm{\delta \boldsymbol{u}_h^{f,n+1}}_{v,f}^2
      \\
      &+ \tfrac{1}{2}\mu^f\kappa^{-1}\norm[0]{\delta z_h^{n+1}}_{\Omega^b}^2
      + \gamma \mu^f\kappa^{-1/2} \norm[0]{ (\delta \bar{u}_h^{f,n+1} - \tfrac{1}{\Delta t}(\delta \bar{u}_h^{b,n+1}- \delta \bar{u}_h^{b,n})^t}_{\Gamma_I}^2
      \\
      \le& \frac{c_p^2}{c_{ae}^f\mu^f}\norm[0]{\delta f^{f,n+1}}_{\Omega^f}^2
      + \frac{c_{td}^2\mu^f}{2\kappa}\norm[0]{\delta g^{b,n+1}}_{\Omega^b}^2 
      + \tfrac{1}{\Delta t}(\delta f^{b,n+1}, \delta u_h^{b,n+1} - \delta u_h^{b,n})_{\Omega^b}
      + \frac{c_w}{2}\tnorm{\boldsymbol{u}_h^{f,n}}_{v,f}\tnorm{\delta \boldsymbol{u}_h^{f,n}}_{v,f}^2.      
    \end{split}
  \end{equation}
  For the last term on the right hand side of
  \cref{eq:duhfnp1duhfnterms2} use
  $\tnorm{\boldsymbol{u}_h^{f,n}} \le c_{ae}^f\mu^f/(4c_w)$
  (assumption \cref{eq:assumpuhfkfirst}). Furthermore, by
  $2a(a-b) = a^2-b^2 + (a-b)^2$, coercivity of $a_h^b$ (see
  \cref{eq:ah-coercive-j}), and \cref{eq:dpoincareineq-b}, we find:
  \begin{equation*}
    \begin{split}
      2a_h^b(\delta \boldsymbol{u}_h^{b,n+1}, \delta \boldsymbol{u}_h^{b,n+1}-\delta \boldsymbol{u}_h^{b,n})
      =& a_h^b(\delta \boldsymbol{u}_h^{b,n+1}, \delta \boldsymbol{u}_h^{b,n+1})
      - a_h^b(\delta \boldsymbol{u}_h^{b,n}, \delta \boldsymbol{u}_h^{b,n})
      \\
      &+ a_h^b(\delta \boldsymbol{u}_h^{b,n+1} - \delta \boldsymbol{u}_h^{b,n},
      \delta \boldsymbol{u}_h^{b,n+1} - \delta \boldsymbol{u}_h^{b,n})
      \\
      \ge& a_h^b(\delta \boldsymbol{u}_h^{b,n+1}, \delta \boldsymbol{u}_h^{b,n+1})
      - a_h^b(\delta \boldsymbol{u}_h^{b,n}, \delta \boldsymbol{u}_h^{b,n})
      + c_{ae}^b\mu^bc_p^{-1}\norm[0]{\delta u_h^{b,n+1} - \delta u_h^{b,n}}_{\Omega^b}^2.
    \end{split}
  \end{equation*}
  Also using $2a(a-b) \ge a^2-b^2$, we obtain after multiplying
  \cref{eq:duhfnp1duhfnterms2} by $2/\Delta t$, replacing $n$ by $k$,
  using that $d_tf^{k+1}=\Delta t^{-1}\delta f^{k+1}$, and summing for
  $k=1$ to $k=n$:
  \begin{equation}
    \label{eq:aftersumminguhfnp1}
    \begin{split}
      & \norm[0]{d_t u_h^{f,n+1}}_{\Omega^f}^2
      + a_h^b(d_t \boldsymbol{u}_h^{b,n+1}, d_t \boldsymbol{u}_h^{b,n+1})
      + c_{ae}^b\mu^bc_p^{-1}\sum_{k=1}^n\norm[0]{d_t u_h^{b,k+1} - d_t u_h^{b,k}}_{\Omega^b}^2
      \\
      & + \lambda^{-1}\norm[0]{\alpha d_t p_h^{p,n+1} - d_t p_h^{b,n+1}}_{\Omega^b}^2
      + c_0\norm[0]{d_t p_h^{p,n+1}}_{\Omega^b}^2
      + \tfrac{1}{4}c_{ae}^f\mu^f\Delta t \tnorm{d_t\boldsymbol{u}_h^{f,n+1}}_{v,f}^2
      \\
      &+ \mu^f\kappa^{-1}\Delta t\sum_{k=1}^n\norm[0]{d_t z_h^{k+1}}_{\Omega^b}^2
      + 2\gamma \mu^f\kappa^{-1/2}\Delta t\sum_{k=1}^n \norm[0]{ d_t\bar{u}_h^{f,k+1}
        - \tfrac{1}{\Delta t}(d_t\bar{u}_h^{b,k+1}- d_t\bar{u}_h^{b,k})^t}_{\Gamma_I}^2
      \\
      \le &
      \norm[0]{d_t u_h^{f,1}}_{\Omega^f}^2
      + a_h^b(d_t \boldsymbol{u}_h^{b,1}, d_t \boldsymbol{u}_h^{b,1})
      + \lambda^{-1}\norm[0]{\alpha d_t p_h^{p,1} - d_t p_h^{b,1}}_{\Omega^b}^2
      + c_0\norm[0]{d_t p_h^{p,1}}_{\Omega^b}^2
      \\
      & + \tfrac{1}{4}c_{ae}^f\mu^f\Delta t\tnorm{d_t\boldsymbol{u}_h^{f,1}}_{v,f}^2
      + \frac{2c_p^2}{c_{ae}^f\mu^f}\Delta t\sum_{k=1}^n\norm[0]{d_t f^{f,k+1}}_{\Omega^f}^2
      \\
      &       
      + \frac{c_{td}^2\mu^f}{\kappa}\Delta t\sum_{k=1}^n\norm[0]{d_t g^{b,k+1}}_{\Omega^b}^2
      + \tfrac{2}{(\Delta t)^2}\sum_{k=1}^n(\delta f^{b,k+1}, \delta u_h^{b,k+1} - \delta u_h^{b,k})_{\Omega^b}.      
    \end{split}
  \end{equation}
  Apply summation-by-parts to the last term on the right hand side of
  \cref{eq:aftersumminguhfnp1}:
  \begin{equation}
    \label{eq:sbpterms}
    \begin{split}
      &\tfrac{1}{(\Delta t)^2} \sum_{k=1}^n(\delta f^{b,k+1}, \delta u_h^{b,k+1} - \delta u_h^{b,k})_{\Omega^b}
      \\
      &= (d_tf^{b,n+1}, d_tu_h^{b,n+1})_{\Omega^b} - (d_tf^{b,2}, d_tu_h^{b,1})_{\Omega^b}
      - \Delta t \sum_{k=2}^n(d_{tt}f^{b,k+1}, d_tu_h^{b,k})_{\Omega^b}
      \\
      &\le \norm[0]{d_t f^{b,n+1}}_{\Omega^b} \norm[0]{d_t u_h^{b,n+1}}_{\Omega^b}
      + \norm[0]{d_t f^{b,2}}_{\Omega^b} \norm[0]{d_t u_h^{b,1}}_{\Omega^b}
      + \Delta t \sum_{k=2}^n \norm[0]{d_{tt}f^{b,k+1}}_{\Omega^b} \norm[0]{d_t u_h^{b,k}}_{\Omega^b},
    \end{split}
  \end{equation}
  and note that by \cref{eq:dpoincareineq-b} and \cref{eq:ah-coercive-j} we have
  \begin{equation}
    \label{eq:dtuhbkineq}
    \norm[0]{d_tu_h^{b,k}}_{\Omega^b}^2 \le c_p^2\tnorm{d_t\boldsymbol{u}_h^{b,k}}_{v,b}
    \le c_p^2(c_{ae}^b\mu^b)^{-1}a_h^b(d_t\boldsymbol{u}_h^{b,k}, d_t\boldsymbol{u}_h^{b,k}).
  \end{equation}
  Combining \cref{eq:aftersumminguhfnp1,eq:sbpterms,eq:dtuhbkineq},
  and using the definition of $X^n$ (see \cref{eq:defXn}), we obtain
  \begin{multline*}
    X^n
    \le 3X^0
    + \tfrac{1}{4}c_{ae}^f\mu^f\Delta t\tnorm{d_t\boldsymbol{u}_h^{f,1}}_{v,f}^2
    + \frac{2c_p^2}{c_{ae}^f\mu^f}\Delta t\sum_{k=1}^n\norm[0]{d_t f^{f,k+1}}_{\Omega^f}^2
    + \frac{c_{td}^2\mu^f}{\kappa}\Delta t\sum_{k=1}^n\norm[0]{d_t g^{b,k+1}}_{\Omega^b}^2
    \\
    + c_p(c_{ac}^b \mu^b)^{-1/2} (\norm[0]{d_t f^{b,n+1}}_{\Omega^b} (2X^{n})^{1/2}
    + \norm[0]{d_t f^{b,2}}_{\Omega^b} (2X^0)^{1/2}
    + \Delta t\sum_{k=2}^n \norm[0]{d_{tt}f^{b,k+1}}_{\Omega^b} (2X^{k-1})^{1/2}).
  \end{multline*}
  Assume $\max_{1\le k \le n}X^k = X^n$. Then, 
  \begin{multline}
    \label{eq:Xnlethanterms1}
    X^n
    \le 3X^0
    + \tfrac{1}{4}c_{ae}^f\mu^f\Delta t\tnorm{d_t\boldsymbol{u}_h^{f,1}}_{v,f}^2
    + \frac{2c_p^2}{c_{ae}^f\mu^f}\Delta t\sum_{k=1}^n\norm[0]{d_t f^{f,k+1}}_{\Omega^f}^2
    + \frac{c_{td}^2\mu^f}{\kappa}\Delta t\sum_{k=1}^n\norm[0]{d_t g^{b,k+1}}_{\Omega^b}^2
    \\
    + \sqrt{2}c_p(c_{ac}^b \mu^b)^{-1/2} (\norm[0]{d_t f^{b,n+1}}_{\Omega^b} + \norm[0]{d_t f^{b,2}}_{\Omega^b}
    + \Delta t\sum_{k=2}^n \norm[0]{d_{tt}f^{b,k+1}}_{\Omega^b} ) (X^n)^{1/2}.
  \end{multline}
  For nonnegative $A$ and $B$ the following holds:
  \begin{equation*}
    X^n \le A + B (X^n)^{1/2}
    \Leftrightarrow \del[2]{ (X^n)^{1/2}- \tfrac{1}{2}B }^2
    \le A + \tfrac{1}{4}B^2
    \Rightarrow X^n \le \sbr[2]{ \tfrac{1}{2}B + \del[1]{A + \tfrac{1}{4}B^2}^{1/2} }^2
    \le 2A + B^2.
  \end{equation*}
  Applying this to \cref{eq:Xnlethanterms1} and using the definition
  of $F^n$ (see \cref{eq:defFm}),
  \begin{equation}
    \label{eq:finalXnresult}
    X^n \le 6 X^0 + \tfrac{1}{2}c_{ae}^f\mu^f\Delta t\tnorm{d_t\boldsymbol{u}_h^{f,1}}_{v,f}^2 + F^n,
  \end{equation}
  Note that if the assumption $\max_{1\le k \le n}X^k = X^n$ does not
  hold, then there exists $0 \le m < n$ such that
  $\max_{1\le k \le n} X^k = X^m$. In this case,
  \cref{eq:finalXnresult} holds with $n$ replaced by $m$ and we find:
  \begin{equation*}
    X^n < X^m
    \le 6 X^0
    + \tfrac{1}{2}c_{ae}^f\mu^f\Delta t\tnorm{d_t\boldsymbol{u}_h^{f,1}}_{v,f}^2 + F^m
    \le 6 X^0
    + \tfrac{1}{2}c_{ae}^f\mu^f\Delta t\tnorm{d_t\boldsymbol{u}_h^{f,1}}_{v,f}^2 + F^n.
  \end{equation*}
  This completes the proof of \cref{eq:boundXnterms}. 
\end{proof}

An immediate consequence of \cref{eq:boundX0terms,eq:boundXnterms} is
the following result.

\begin{corollary}
  \label{cor:Xnbound}
  Define $G^0:=F^0$ and $G^n := \tfrac{145}{24}F^0 + F^n$ for
  $1 \le n \le N-1$. Under the assumptions of Lemma \ref{lem:boundXn},
  \begin{equation}
    \label{eq:defGn}
    X^n \le G^n \qquad \forall 0 \le n \le N-1.
  \end{equation}
\end{corollary}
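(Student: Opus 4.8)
The proof is a short bookkeeping argument: assemble the three inequalities already proved in Lemma~\ref{lem:boundXn}. Note first that the hypotheses of the corollary are exactly ``the assumptions of Lemma~\ref{lem:boundXn}'', i.e.\ zero initial data, $f^{b,0}=g^{b,0}=0$, and the smallness condition \cref{eq:assumpuhfkfirst} on $\tnorm{\boldsymbol{u}_h^{f,k}}_{v,f}$ for $0\le k \le n$, so all of \cref{eq:boundX0terms-a,eq:boundX0terms-b,eq:boundXnterms} are available.

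For the base case $n=0$, the claim $X^0 \le G^0 = F^0$ is precisely \cref{eq:boundX0terms-a}, so nothing more is needed there.

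For $1 \le n \le N-1$, the plan is to start from \cref{eq:boundXnterms},
\[
  X^n \le 6X^0 + \tfrac{1}{2}c_{ae}^f\mu^f\Delta t\tnorm{d_t\boldsymbol{u}_h^{f,1}}_{v,f}^2 + F^n,
\]
and bound the two leading terms using the first-time-step estimates: by \cref{eq:boundX0terms-a}, $6X^0 \le 6F^0$, and by \cref{eq:boundX0terms-b}, $\tfrac{1}{2}c_{ae}^f\mu^f\Delta t\tnorm{d_t\boldsymbol{u}_h^{f,1}}_{v,f}^2 \le \tfrac{1}{24}F^0$. Adding these and using $6 = \tfrac{144}{24}$ gives $X^n \le \tfrac{145}{24}F^0 + F^n = G^n$, as claimed.

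There is no genuine obstacle here — the content is entirely contained in Lemma~\ref{lem:boundXn}, and the corollary merely records the consequence in a single clean inequality with the explicit constant $\tfrac{145}{24}$. The only point requiring any care is confirming that $F^0$, $F^n$ are the quantities defined just before the lemma (they are), so that the combined bound matches the stated definition of $G^n$ verbatim.
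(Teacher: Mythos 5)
Your proof is correct and is exactly the computation the paper intends: the corollary is stated as an ``immediate consequence'' of \cref{eq:boundX0terms,eq:boundXnterms}, and your combination of $6X^0\le 6F^0=\tfrac{144}{24}F^0$ with the halved form $\tfrac{1}{24}F^0$ of \cref{eq:boundX0terms-b} reproduces the constant $\tfrac{145}{24}$ precisely. Nothing is missing.
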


The following lemma is a minor modification of
\cite[Lemma~2]{Cesmelioglu:2022a}.

\begin{lemma}
  \label{lemma:not-gronwall-s}
  Let $\cbr[0]{A_i}_i, \cbr[0]{B_i}_i$, $\cbr[0]{E_i}_i$,
  $\cbr[0]{\overline{E}_i}_i$, $\cbr[0]{\widetilde{E}_i}_i$, and
  $\cbr[0]{D_i}_i$ be nonnegative sequences. Suppose these sequences
  satisfy
  \begin{equation}
    \label{eq:abcd-assumption-modified-s}
    A_n^2 + \sum_{i=1}^n B_i^2
    \le
    \sum_{i=1}^{n} E_i A_i + \sum_{i=1}^{n-1} \overline{E}_i A_i+ \widetilde{E}_n A_n + \sum_{i=1}^n D_i,
  \end{equation}	
  for all $n \ge 1$. Then for any $n \ge 1$,
  \begin{equation}
    \label{eq:An-Bn-estimate-modified-s}
    \del[2]{A_n^2 + \sum_{i=1}^n B_i^2}^{1/2} 
    \leq  \sum_{i=1}^{n} E_i + \sum_{i=1}^{n-1} \overline{E}_i + \max_{1\le i \le n} \widetilde{E}_i + \del[2]{\sum_{i=1}^n D_i}^{1/2} 
  \end{equation}	    
  with $C>0$ independent of $n$.
\end{lemma}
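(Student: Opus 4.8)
The plan is to turn the coupled inequality \cref{eq:abcd-assumption-modified-s} into a scalar quadratic inequality for $\max_k A_k$, solve that, and then feed the resulting bound back into the hypothesis. Fix $n\ge 1$ and choose $m\in\{1,\dots,n\}$ with $A_m=\max_{1\le k\le n}A_k$. Write $P:=\sum_{i=1}^{n}E_i+\sum_{i=1}^{n-1}\overline{E}_i+\max_{1\le i\le n}\widetilde{E}_i$ and $Q:=\sum_{i=1}^{n}D_i$, both nonnegative. First I would apply \cref{eq:abcd-assumption-modified-s} with $n$ replaced by $m$, discard the nonnegative term $\sum_{i=1}^{m}B_i^2$ on the left, and bound the right-hand side using $A_i\le A_m$ for $i\le m$, $\widetilde{E}_m\le\max_{1\le i\le n}\widetilde{E}_i$, and monotonicity of the (nonnegative) partial sums in $E_i$, $\overline{E}_i$, $D_i$; this gives $A_m^2\le A_m P+Q$.

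Second, I would solve this scalar inequality by completing the square: $\del[1]{A_m-\tfrac12 P}^2\le Q+\tfrac14 P^2$, hence $A_m\le\tfrac12 P+\del[1]{Q+\tfrac14 P^2}^{1/2}\le P+Q^{1/2}$, where the last step uses $\del[1]{a+b}^{1/2}\le a^{1/2}+b^{1/2}$ for $a,b\ge 0$. In particular $A_n\le A_m\le P+Q^{1/2}$.

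Third, I would return to \cref{eq:abcd-assumption-modified-s} at the original index $n$, this time keeping the full left-hand side $A_n^2+\sum_{i=1}^{n}B_i^2$. Bounding the right-hand side exactly as in the first step (now $A_i\le A_m$ for all $i\le n$ and $\widetilde{E}_n\le\max_{1\le i\le n}\widetilde{E}_i$) gives $A_n^2+\sum_{i=1}^{n}B_i^2\le A_m P+Q$, and substituting $A_m\le P+Q^{1/2}$ yields $A_n^2+\sum_{i=1}^{n}B_i^2\le P^2+P Q^{1/2}+Q\le\del[1]{P+Q^{1/2}}^2$. Taking square roots gives precisely \cref{eq:An-Bn-estimate-modified-s}; in fact no constant $C$ is needed, the inequality holds with constant $1$ (the reference to $C$ in the statement is a harmless remnant).

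I do not expect a genuine obstacle here: this is the standard ``non-Gronwall'' energy estimate, and no discrete Gronwall iteration is required. The only points demanding a little care are (i) that $\widetilde{E}_n$ appears only at the top index, so in both applications of the hypothesis it must be absorbed into $\max_{1\le i\le n}\widetilde{E}_i$ rather than summed; and (ii) that the hypothesis must be used twice — once at the maximizing index $m$ to obtain the self-improving bound on $A_m$ (and hence on $A_n$), and once at $n$ to capture $\sum_{i=1}^{n}B_i^2$ — since a single application bounds only one of the two quantities on the left.
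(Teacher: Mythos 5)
Your proof is correct, and it takes a cleaner route than the paper's. The paper's argument maximizes the full quantity $A_\ell^2 + \sum_{i=1}^{\ell} B_i^2$ over $\ell$, splits into cases according to whether this maximum exceeds $\sum_{i=1}^n D_i$, bounds $\max_i A_i$ and $\bigl(\sum_i D_i\bigr)^{1/2}$ by the square root of the maximized quantity, and then divides through by that square root. You instead maximize $A_k$ alone, reduce the hypothesis at the maximizing index to the scalar quadratic inequality $A_m^2 \le A_m P + Q$, solve it by completing the square to get $A_m \le P + Q^{1/2}$, and substitute back into the hypothesis at index $n$ to recover the $\sum_i B_i^2$ term. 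Both arguments yield the bound with constant $1$ (you are right that the trailing ``with $C>0$'' in the statement is vestigial), but yours avoids the case split and the division by a quantity that must be checked to be positive, at the mild cost of invoking the hypothesis twice; the two applications are genuinely needed for the reason you state, since a single application at $m$ loses the $B_i$ sum up to $n$. The only detail worth making explicit in a final write-up is the monotonicity step $\sum_{i=1}^{m} E_i \le \sum_{i=1}^{n} E_i$, $\sum_{i=1}^{m-1}\overline{E}_i \le \sum_{i=1}^{n-1}\overline{E}_i$, and $\sum_{i=1}^{m} D_i \le \sum_{i=1}^{n} D_i$ for $m \le n$, which you correctly use and which holds by nonnegativity.
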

\begin{proof}
  Suppose that 
  \begin{equation}
    \label{eq:An-Bn-max-assumption-modified-s}
    A_n^2 + \sum_{i=1}^n B_i^2
    = \max_{1 \le \ell \le n} \cbr[2]{ A_{\ell}^2 + \sum_{i=1}^{\ell} B_i^2 } .
  \end{equation}
  If $A_n^2 + \sum_{i=1}^n B_i^2 \le \sum_{i=1}^n D_i$, then
  \cref{eq:An-Bn-estimate-modified-s} naturally holds. If
  $A_n^2 + \sum_{i=1}^n B_i^2 > \sum_{i=1}^n D_i$, then
  \cref{eq:abcd-assumption-modified-s} and
  \cref{eq:An-Bn-max-assumption-modified-s} imply
   \begin{align*}
    A_n^2 + \sum_{i=1}^n B_i^2
    &\le
      \del[2]{\sum_{i=1}^{n} E_i + \sum_{i=1}^{n-1} \overline{E}_i  + \widetilde{E}_n}\max_{1\leq i\leq n}A_i
      + \del[2]{\sum_{i=1}^n D_i}^{1/2}\del[2]{\sum_{i=1}^n D_i}^{1/2}
    \\
    &\le
      \del[2]{\sum_{i=1}^{n} E_i + \sum_{i=1}^{n-1} \overline{E}_i  + \widetilde{E}_n
      + \del[2]{\sum_{i=1}^n D_i}^{1/2}} \del[2]{A_n^2 + \sum_{i=1}^n B_i^2}^{1/2}.
  \end{align*}	
  \Cref{eq:An-Bn-estimate-modified-s} follows from dividing
  this by $\del[2]{A_n^2 + \sum_{i=1}^n B_i^2}^{1/2}$.
    
  If \cref{eq:An-Bn-max-assumption-modified-s} is not true, then there
  exists $1 \le n_0 < n$ such that
  \begin{equation*}
    A_{n_0}^2 + \sum_{i=1}^{n_0} B_i^2 = \max_{1 \le \ell \le n} \cbr[2]{ A_{\ell}^2 + \sum_{i=0}^{\ell} B_i^2 }.
  \end{equation*}
  By the same argument as above, we have 
  \begin{equation}
    \label{eq:An-Bn-estimate2-modified-s}
    \del[2]{A_{n_0}^2 + \sum_{i=1}^{n_0} B_i^2}^{1/2} 
    \leq  \sum_{i=1}^{{n_0}} E_i + \sum_{i=0}^{{n_0-1}} \overline{E}_i+ \max_{1\le i \le {n_0}} \widetilde{E}_i
    + \del[2]{\sum_{i=1}^{n_0} D_i}^{1/2}.
  \end{equation}
  Then, \cref{eq:An-Bn-estimate-modified-s} follows by
  \cref{eq:An-Bn-estimate2-modified-s}, $n_0<n$, and the nonnegativities of
  $E_i$, $\overline{E}_i$, $\widetilde{E}_i$, $D_i$.
\end{proof}

For the following lemma we define
\begin{multline}
  \label{eq:defH2}
  H :=
  2 \norm[0]{f^f}_{\ell^1(J;\Omega^f)}
  +  c_{td} (\mu^f/\kappa)^{1/2} \norm[0]{g^b}_{\ell^2(J;\Omega^b)}
  + 2c_p(c_{ae}^b\mu^b)^{-1/2} \norm[0]{d_tf^b}_{\ell^1(J;\Omega^b)}
  \\
  + 2c_p(c_{ae}^b\mu^b)^{-1/2} \norm[0]{f^b}_{\ell^{\infty}(J;\Omega^b)}.
\end{multline}

\begin{lemma}
  \label{lem:uhfk-uhfnp1bounds}
  Let $\boldsymbol{u}_h^{0} = 0$ and $\boldsymbol{p}_h^{p,0} = 0$
  (then also $\boldsymbol{p}_h^{b,0} = \boldsymbol{0}$ and
  $z_h^0 = 0$) and assume $f^{b,0}=0$ and $g^{b,0}=0$. Let
  $1 \le n \le N-1$. Assume
  $(\boldsymbol{u}_h^{k}, \boldsymbol{p}_h^{k}, z_h^{k},
  \boldsymbol{p}_h^{p,k}) \in \boldsymbol{X}_h$ is a solution to
  \cref{eq:hdgfd} for $0 \le k \le n$ such that
  \cref{eq:assumpuhfkfirst} holds. Define for $1\le i \le N$
  \begin{subequations}
    \label{eq:definition-AiBi}
    \begin{align}
      \label{eq:definition-Ai}
      A_i^2
      :=& \tfrac{1}{2}\norm[0]{u_h^{f,i}}_{\Omega^f}^2 
          + \tfrac{1}{2}a_h^b(\boldsymbol{u}_h^{b,i}, \boldsymbol{u}_h^{b,i})
          + \tfrac{1}{2}\lambda^{-1}\norm[0]{\alpha p_h^{p,i} - p_h^{b,i}}_{\Omega^b}^2
          + \tfrac{1}{2}c_0 \norm[0]{p_h^{p,i}}_{\Omega^b}^2,
      \\
      \label{eq:definition-Bi}
      B_i^2
      :=& \tfrac{1}{4} \Delta t c_{ae}^f\mu^f\tnorm{\boldsymbol{u}_h^{f,i}}_{v,f}^2
          + \tfrac{1}{2} \Delta t \gamma \mu^f \kappa^{-1/2} \norm[0]{ (\bar{u}_h^{f,i} - d_t\bar{u}_h^{b,i})^t }_{\Gamma_I}^2
          + \tfrac{1}{2} \Delta t \mu^f\kappa^{-1}\norm[0]{z_h^{i}}_{\Omega^b}^2.        
    \end{align}
  \end{subequations}
  Then for all $1 \le k \le N$,
  \begin{equation}
    \label{eq:boundinguhfm}
    \del[2]{A_k^2 + \sum_{i=1}^k B_i^2}^{1/2} 
    \leq\tfrac{1}{\sqrt{2}}H,
  \end{equation}
  and
  \begin{multline}
    \label{eq:boundinguhfntnorm}
    \tfrac{1}{2}c_{ae}^f\mu^f\tnorm{\boldsymbol{u}_h^{f,k}}_{v,f}^2
    + \gamma \mu^f \kappa^{-1/2} \norm[0]{ (\bar{u}_h^{f,k} - d_t\bar{u}_h^{b,k})^t }_{\Gamma_I}^2
    + \tfrac{1}{2} \mu^f\kappa^{-1}\norm[0]{z_h^{k}}_{\Omega^b}^2
    \\
    \le \norm[0]{f^{f,k}}_{\Omega^f} H + \tfrac{1}{2} c_{td}^2 \mu^f \kappa^{-1} \norm[0]{g^{b,k}}_{\Omega^b}^2
    + c_p(c_{ae}^b \mu^b)^{-1/2} \norm[0]{f^{b,k}}_{\Omega^b} (2G^{k-1})^{1/2} + H(2G^{k-1})^{1/2}.          
  \end{multline}
\end{lemma}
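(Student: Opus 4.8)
The idea is to obtain both bounds from the energy identity produced by testing \cref{eq:hdgfd} with its natural ``energy'' test functions, generalizing the $n=0$ computation in the proof of \cref{lem:boundXn} to an arbitrary time level. Fix $k$ in the stated range. For each $m$ with $0\le m\le k-1$ I test \cref{eq:hdgfd} at step $m$ with $\boldsymbol{v}_h^f=\boldsymbol{u}_h^{f,m+1}$, $\boldsymbol{v}_h^b=d_t\boldsymbol{u}_h^{b,m+1}$, $\boldsymbol{q}_h^f=-\boldsymbol{p}_h^{f,m+1}$, $\boldsymbol{q}_h^b=-d_t\boldsymbol{p}_h^{b,m+1}$, $w_h=z_h^{m+1}$ and $\boldsymbol{q}_h^p=\boldsymbol{p}_h^{p,m+1}$, and add the four equations. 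In the sum the $b_h^f$- and $b_h^I$-terms cancel, the $b_h^b$-term carrying $z_h^{m+1}$ cancels against \cref{eq:hdgfd-d} (which also deposits $\mu^f\kappa^{-1}\norm[0]{z_h^{m+1}}_{\Omega^b}^2$ on the left), and the displacement $b_h^b$-terms together with the $c_h$-terms collapse --- after additionally testing \cref{eq:hdgfd-b} at level $m+1$ with $\tfrac{1}{\Delta t}\delta\boldsymbol{p}_h^{b,m+1}$ and its difference between levels $m$ and $m+1$ with $\tfrac{1}{\Delta t}\boldsymbol{p}_h^{b,m+1}$ --- into the single term $c_h((d_tp_h^{p,m+1},d_tp_h^{b,m+1}),\alpha p_h^{p,m+1}-p_h^{b,m+1})=\tfrac{1}{\Delta t}\lambda^{-1}(\delta\xi^{m+1},\xi^{m+1})_{\Omega^b}$ with $\xi^{m+1}:=\alpha p_h^{p,m+1}-p_h^{b,m+1}$. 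Since $u_h^{f,m}$ is divergence-conforming and \cref{eq:assumpuhfkfirst} with \cref{eq:dtrpoincareineq} ($r=2$) gives $\norm[0]{u_h^{f,m}\cdot n}_{\Gamma_{IN}^f}\le\tfrac12\mu^fc_{ae}^f(c_{pq}^2+c_{si,4}^2)^{-1}$, \cref{prop:coercivitythahf} provides $t_h(u_h^{f,m};\boldsymbol{u}_h^{f,m+1},\boldsymbol{u}_h^{f,m+1})+a_h^f(\boldsymbol{u}_h^{f,m+1},\boldsymbol{u}_h^{f,m+1})\ge\tfrac12c_{ae}^f\mu^f\tnorm{\boldsymbol{u}_h^{f,m+1}}_{v,f}^2$; applying $2a(a-b)\ge a^2-b^2$ to the terms $(d_tu_h^{f,m+1},u_h^{f,m+1})_{\Omega^f}$, $a_h^b(\boldsymbol{u}_h^{b,m+1},d_t\boldsymbol{u}_h^{b,m+1})$, $\lambda^{-1}(\delta\xi^{m+1},\xi^{m+1})_{\Omega^b}$ and $c_0(d_tp_h^{p,m+1},p_h^{p,m+1})_{\Omega^b}$ (the last three using symmetry and positivity of the respective forms) and dropping the nonnegative $a_h^I$-contribution, I arrive at a per-step inequality whose left side controls the ingredients of $A_{m+1}^2$ and $B_{m+1}^2$ and whose right side is $(f^{f,m+1},u_h^{f,m+1})_{\Omega^f}+(f^{b,m+1},d_tu_h^{b,m+1})_{\Omega^b}+(g^{b,m+1},p_h^{p,m+1})_{\Omega^b}$.

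To prove \cref{eq:boundinguhfm} I multiply this per-step inequality by $\Delta t$ and sum over $m=0,\dots,k-1$: the telescoping terms give exactly $A_k^2$ (the endpoint contributions at $m=0$ vanish because $\boldsymbol{u}_h^0=0$, $\boldsymbol{p}_h^{p,0}=0$, $\boldsymbol{p}_h^{b,0}=\boldsymbol{0}$, whence $\xi^0=0$), while the remaining left-side terms dominate $\sum_{i=1}^kB_i^2$. On the right, $(f^{f,i},u_h^{f,i})_{\Omega^f}\le\sqrt{2}\,\norm[0]{f^{f,i}}_{\Omega^f}A_i$; for the $g^b$-term I invoke \cref{eq:tnormppbound} and Young's inequality, absorbing half of $\Delta t\mu^f\kappa^{-1}\norm[0]{z_h^i}_{\Omega^b}^2$ into the left and leaving a data term $\tfrac12c_{td}^2\mu^f\kappa^{-1}\Delta t\norm[0]{g^{b,i}}_{\Omega^b}^2$; and for the $f^b$-term I rewrite $\Delta t\sum_m(f^{b,m+1},d_tu_h^{b,m+1})_{\Omega^b}=\sum_{i=1}^k(f^{b,i},\delta u_h^{b,i})_{\Omega^b}$ and apply summation by parts to obtain $(f^{b,k},u_h^{b,k})_{\Omega^b}-\Delta t\sum_{i=1}^{k-1}(d_tf^{b,i+1},u_h^{b,i})_{\Omega^b}$ (the lower endpoint dropping since $u_h^{b,0}=0$); bounding $\norm[0]{u_h^{b,i}}_{\Omega^b}\le\sqrt{2}\,c_p(c_{ae}^b\mu^b)^{-1/2}A_i$ by \cref{eq:dpoincareineq-b} and \cref{eq:ah-coercive-j} then turns the $f^b$-contribution into a term $\widetilde E_kA_k$ with $\widetilde E_k\propto\norm[0]{f^{b,k}}_{\Omega^b}$ plus terms $\overline E_iA_i$ with $\overline E_i\propto\Delta t\norm[0]{d_tf^{b,i+1}}_{\Omega^b}$. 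The resulting estimate has the form assumed in \cref{lemma:not-gronwall-s}, whose conclusion, after packaging the data into the $\ell^1$, $\ell^2$ and $\ell^\infty$ norms composing $H$, is precisely \cref{eq:boundinguhfm}.

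For \cref{eq:boundinguhfntnorm} I keep instead the per-step inequality at the single step $m=k-1$. I retain $\tfrac12c_{ae}^f\mu^f\tnorm{\boldsymbol{u}_h^{f,k}}_{v,f}^2$, $\gamma\mu^f\kappa^{-1/2}\norm[0]{(\bar u_h^{f,k}-d_t\bar u_h^{b,k})^t}_{\Gamma_I}^2$ and (after the same Young step on the $g^b$-term) $\tfrac12\mu^f\kappa^{-1}\norm[0]{z_h^k}_{\Omega^b}^2$ on the left, and move to the right the remaining left-side ``cross'' terms $(d_tu_h^{f,k},u_h^{f,k})_{\Omega^f}$, $a_h^b(\boldsymbol{u}_h^{b,k},d_t\boldsymbol{u}_h^{b,k})$, $\lambda^{-1}(d_t\xi^k,\xi^k)_{\Omega^b}$, $c_0(d_tp_h^{p,k},p_h^{p,k})_{\Omega^b}$, along with $(f^{f,k},u_h^{f,k})_{\Omega^f}$ and $(f^{b,k},d_tu_h^{b,k})_{\Omega^b}$. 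Each cross term is handled by Cauchy--Schwarz (for the $a_h^b$ term, by its own Cauchy--Schwarz inequality): every factor of the shape ``$d_t(\cdot)$ at level $k$'' is controlled by $(X^{k-1})^{1/2}\le(G^{k-1})^{1/2}$ through the definition \cref{eq:defXn}, \cref{eq:dpoincareineq-b}, \cref{eq:ah-coercive-j} and \cref{cor:Xnbound}, while every factor at level $k$ --- namely $\norm[0]{u_h^{f,k}}_{\Omega^f}$, $a_h^b(\boldsymbol{u}_h^{b,k},\boldsymbol{u}_h^{b,k})^{1/2}$, $\lambda^{-1/2}\norm[0]{\xi^k}_{\Omega^b}$, $c_0^{1/2}\norm[0]{p_h^{p,k}}_{\Omega^b}$ --- is controlled through $A_k\le\tfrac{1}{\sqrt{2}}H$, i.e.\ through \cref{eq:boundinguhfm}; collecting the constants yields \cref{eq:boundinguhfntnorm}.

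I expect the main obstacle to be the algebraic collapse of the coupled displacement/total-pressure/pore-pressure terms into the single telescoping quantity $\tfrac{1}{\Delta t}\lambda^{-1}(\delta\xi^{m+1},\xi^{m+1})_{\Omega^b}$: this is where the total-pressure structure of \cref{eq:biot} is used and where one is forced to invoke \cref{eq:hdgfd-b} at two consecutive time levels. A secondary point is that the smallness hypothesis \cref{eq:assumpuhfkfirst} must serve two purposes at once --- activating \cref{prop:coercivitythahf} and placing the iterates under the hypotheses of \cref{lem:boundXn} and \cref{cor:Xnbound}, so that the bound $X^{k-1}\le G^{k-1}$ needed for the cross-term estimates is legitimately available; everything else is bookkeeping of constants.
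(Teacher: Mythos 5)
Your proposal is correct and follows essentially the same route as the paper: the same energy test functions, the same collapse of the displacement/total-pressure coupling into the telescoping term $\tfrac{1}{\Delta t}\lambda^{-1}(\delta\xi^{m+1},\xi^{m+1})_{\Omega^b}$ with $\xi=\alpha p_h^p-p_h^b$ (the paper reaches it by applying $d_t$ to the Biot part of \cref{eq:hdgfd-b} and testing with $-\boldsymbol{p}_h^{b,m+1}$, which is algebraically equivalent to your two-level manipulation of \cref{eq:hdgfd-b}), the same summation-by-parts treatment of the $f^b$ term followed by \cref{lemma:not-gronwall-s}, and the same single-step cross-term estimates controlled by $A_k\le H/\sqrt{2}$ and $X^{k-1}\le G^{k-1}$. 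One small slip in your write-up: the nonnegative $a_h^I$-contribution must be retained, not dropped, in the per-step inequality, since it is precisely the middle ingredient of $B_{m+1}^2$ in \cref{eq:definition-Bi}.
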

\begin{proof}
  We first prove \cref{eq:boundinguhfm}. Split \cref{eq:hdgfd-b} as
  \begin{subequations}
    \label{eq:splitting-bfdt}
    \begin{align}
      \label{eq:splitting-dt}
      b_h^b(d_t\boldsymbol{u}_h^{b,n+1}, \boldsymbol{q}_h^b)
      + c_h((d_tp_h^{p,n+1}, d_tp_h^{b,n+1}),q_h^b)
      &= 0,
      \\
      b_h^f(\boldsymbol{u}_h^{f,n+1}, \boldsymbol{q}_h^f) &= 0,
    \end{align}
  \end{subequations}
  where we applied $d_t$ to the terms in the Biot region to obtain
  \cref{eq:splitting-dt}. Now, choose
  $\boldsymbol{v}_h^f = \boldsymbol{u}_h^{f,n+1}$,
  $\boldsymbol{v}_h^b = d_t\boldsymbol{u}_h^{b,n+1}$,
  $\boldsymbol{q}_h^f = -\boldsymbol{p}_h^{f,n+1}$,
  $\boldsymbol{q}_h^b = -\boldsymbol{p}_h^{b,n+1}$, $w_h = z_h^{n+1}$,
  $\boldsymbol{q}_h^{p} = \boldsymbol{p}_h^{p,n+1}$ in
  \cref{eq:hdgfd,eq:splitting-bfdt} and add the resulting equations to
  find:
  \begin{equation}
    \label{eq:afteraddingequations}
    \begin{split}
      & (d_tu_h^{f,n+1}, u_h^{f,n+1})_{\Omega^f}
      + t_h(u_h^{f,n}; \boldsymbol{u}_h^{f,n+1}, \boldsymbol{u}_h^{f,n+1})
      + a_h^f(\boldsymbol{u}_h^{f,n+1}, \boldsymbol{u}_h^{f,n+1})
      + a_h^b(\boldsymbol{u}_h^{b,n+1}, d_t\boldsymbol{u}_h^{b,n+1})
      \\
      & \hspace{2em}
      + \gamma \mu^f \kappa^{-1/2} \norm[0]{ (\bar{u}_h^{f,n+1} - d_t\bar{u}_h^{b,n+1})^t }_{\Gamma_I}^2
      + c_h((d_tp_h^{p,n+1}, d_tp_h^{b,n+1}), \alpha p_h^{p,n+1} - p_h^{b,n+1})
      \\
      & \hspace{2em}      
      + (c_0d_t p_h^{p,n+1}, p_h^{p,n+1})_{\Omega^b}
      + \mu^f\kappa^{-1}\norm[0]{z_h^{n+1}}_{\Omega^b}^2
      \\
      &
      = (f^{f,n+1}, u_h^{f,n+1})_{\Omega^f} + (f^{b,n+1}, d_tu_h^{b,n+1})_{\Omega^b} + (g^{b,n+1}, p_h^{p,n+1})_{\Omega^b}.      
    \end{split}
  \end{equation}
  Using the algebraic inequality $a(a-b) \ge (a^2 - b^2)/2$ for the
  discrete time-derivative terms, \cref{eq:coercivityahthresult}
  (which holds by assumption \cref{eq:assumpuhfkfirst} and
  \cref{eq:dtrpoincareineq}), the Cauchy--Schwarz inequality applied
  to the first and third term on the right hand side of
  \cref{eq:afteraddingequations}, and \cref{eq:tnormppbound}, we find,
  using the definitions of $A_i$ and $B_i$:
  \begin{multline}
    \label{eq:Anp1AnBnp1}
    A_{n+1}^2 - A_{n}^2 + 2B_{n+1}^2
    \le \sqrt{2} \Delta t\norm[0]{f^{f,n+1}}_{\Omega^f} A_{n+1}
    + c_{td}(2\Delta t \mu^f \kappa^{-1})^{1/2} \norm[0]{g^{b,n+1}}_{\Omega^b}B_{n+1}
    \\
    + \Delta t (f^{b,n+1}, d_tu_h^{b,n+1})_{\Omega^b}.    
  \end{multline}
  Let us pause to note that, by summation-by-parts, using that
  $u_h^{b,0}=0$, the Cauchy--Schwarz inequality,
  \cref{eq:dpoincareineq-b}, the coercivity result
  \cref{eq:ah-coercive-j}, and the definition of $A_i$, we have
  \begin{align*}
    \Delta t \sum_{i=0}^n (f^{b,i+1}, d_tu_h^{b,i+1})_{\Omega^b}
    =& (f^{b,n+1}, u_h^{b,n+1})_{\Omega^b} - \Delta t \sum_{i=1}^n (d_tf^{b,i+1}, u_h^{b,i})_{\Omega^b}
    \\
    \le& c_p \norm[0]{f^{b,n+1}}_{\Omega^b} \tnorm{\boldsymbol{u}_h^{b,n+1}}_{v,b}
         + c_p \Delta t \sum_{i=1}^n \norm[0]{d_tf^{b,i+1}}_{\Omega^b}\tnorm{\boldsymbol{u}_h^{b,i}}_{v,b}
    \\
    \le& c_p \norm[0]{f^{b,n+1}}_{\Omega^b} (c_{ae}^b\mu^b)^{-1/2}\sqrt{2} A_{n+1}
    \\
     &+ c_p \Delta t \sum_{i=1}^n \norm[0]{d_tf^{b,i+1}}_{\Omega^b} (c_{ae}^b\mu^b)^{-1/2}\sqrt{2}A_i.
  \end{align*}
  Therefore, replacing $n$ by $i$ in \cref{eq:Anp1AnBnp1} and summing
  for $i$ from $0$ to $n$, and Young's inequality,
  \begin{equation}
    \label{eq:Anp1termslerest}
    \begin{split}
      A_{n+1}^2 + \sum_{i=0}^n B_{i+1}^2
      \le& A_{0}^2 + \sqrt{2} \sum_{i=0}^n \Delta t\norm[0]{f^{f,i+1}}_{\Omega^f} A_{i+1}
      + \Delta t \sum_{i=1}^n \norm[0]{d_tf^{b,i+1}}_{\Omega^b} c_p(c_{ae}^b\mu^b)^{-1/2}\sqrt{2}A_i
      \\
      & + \norm[0]{f^{b,n+1}}_{\Omega^b} c_p(c_{ae}^b\mu^b)^{-1/2}\sqrt{2} A_{n+1}            
      + \tfrac{1}{2} \sum_{i=0}^n c_{td}^2 \Delta t \mu^f \kappa^{-1} \norm[0]{g^{b,i+1}}_{\Omega^b}^2.
    \end{split}
  \end{equation}
  Defining
  \begin{align*}
    E_{i} &= \sqrt{2} \Delta t \norm[0]{f^{f,i}}_{\Omega^f},
    &
    \bar{E}_{i} &= \Delta t \norm[0]{d_tf^{b,i+1}}_{\Omega^b} c_p(c_{ae}^b\mu^b)^{-1/2}\sqrt{2},
    \\
    \tilde{E}_i &= \norm[0]{f^{b,i}}_{\Omega^b} c_p(c_{ae}^b\mu^b)^{-1/2}\sqrt{2},
    &
    D_i &= \tfrac{1}{2} c_{td}^2 \Delta t \mu^f \kappa^{-1} \norm[0]{g^{b,i}}_{\Omega^b}^2,
  \end{align*}
  and noting that $A_0=0$ by our assumption on the initial conditions,
  we can write \cref{eq:Anp1termslerest}, for all $n \ge 1$, as
  \begin{equation*}
    A_{n}^2 + \sum_{i=1}^{n} B_{i}^2
    \le
    \sum_{i=1}^{n}E_{i}A_{i} + \sum_{i=1}^{n-1}\bar{E}_iA_i + \tilde{E}_{n}A_{n} + \sum_{i=1}^{n}D_i.
  \end{equation*}
  Therefore, by Lemma \ref{lemma:not-gronwall-s}, for any $n \ge 1$,
  \begin{equation*}
    \begin{split}
      \del[2]{A_n^2 + \sum_{i=1}^n B_i^2}^{1/2} 
      \leq&  
      \sqrt{2} \Delta t \sum_{i=1}^{n} \norm[0]{f^{f,i}}_{\Omega^f}
      +  c_{td} (\tfrac{1}{2} \mu^f/\kappa)^{1/2} \del[2]{ \Delta t \sum_{i=1}^n \norm[0]{g^{b,i}}_{\Omega^b}^2}^{1/2} 
      \\
      &+ \sqrt{2}c_p(c_{ae}^b\mu^b)^{-1/2} \Delta t \sum_{i=1}^{n-1} \norm[0]{d_tf^{b,i+1}}_{\Omega^b} 
      + \sqrt{2}c_p(c_{ae}^b\mu^b)^{-1/2}\max_{1\le i \le n} \norm[0]{f^{b,i}}_{\Omega^b}.
    \end{split}
  \end{equation*}
  By definition of $A_i$ and $B_i$ (see \cref{eq:definition-AiBi}) and
  $H$ (see \cref{eq:defH2}) we conclude \cref{eq:boundinguhfm} (since
  we consider $1 \le n \le N$).
  
  We now proceed with proving \cref{eq:boundinguhfntnorm}. Let us
  start with \cref{eq:afteraddingequations}. For $1 \le n \le N$:
  \begin{equation*}
    \begin{split}
      & (d_tu_h^{f,n}, u_h^{f,n})_{\Omega^f}
      + t_h(u_h^{f,n-1}; \boldsymbol{u}_h^{f,n}, \boldsymbol{u}_h^{f,n})
      + a_h^f(\boldsymbol{u}_h^{f,n}, \boldsymbol{u}_h^{f,n})
      + a_h^b(\boldsymbol{u}_h^{b,n}, d_t\boldsymbol{u}_h^{b,n})
      \\
      & \hspace{2em}
      + \gamma \mu^f \kappa^{-1/2} \norm[0]{ (\bar{u}_h^{f,n} - d_t\bar{u}_h^{b,n})^t }_{\Gamma_I}^2
      + c_h((d_tp_h^{p,n}, d_tp_h^{b,n}), \alpha p_h^{p,n} - p_h^{b,n})
      \\
      & \hspace{2em}      
      + (c_0d_t p_h^{p,n}, p_h^{p,n})_{\Omega^b}
      + \mu^f\kappa^{-1}\norm[0]{z_h^{n}}_{\Omega^b}^2
      = (f^{f,n}, u_h^{f,n})_{\Omega^f} + (f^{b,n}, d_tu_h^{b,n})_{\Omega^b} + (g^{b,n}, p_h^{p,n})_{\Omega^b}.           
    \end{split}
  \end{equation*}
  By assumption \cref{eq:assumpuhfkfirst} we know that
  \begin{equation*}
    t_h(u_h^{f,n-1}; \boldsymbol{u}_h^{f,n}, \boldsymbol{u}_h^{f,n})
    + a_h^f(\boldsymbol{u}_h^{f,n}, \boldsymbol{u}_h^{f,n})
    \ge
    \tfrac{1}{2}c_{ae}^f\mu^f\tnorm{\boldsymbol{u}_h^{f,n}}_{v,f}^2,
  \end{equation*}
  and so,
  \begin{equation}
    \label{eq:almostuhfnp1bound}
    \begin{split}
      & \tfrac{1}{2}c_{ae}^f\mu^f\tnorm{\boldsymbol{u}_h^{f,n}}_{v,f}^2
      + \gamma \mu^f \kappa^{-1/2} \norm[0]{ (\bar{u}_h^{f,n} - d_t\bar{u}_h^{b,n})^t }_{\Gamma_I}^2
      + \mu^f\kappa^{-1}\norm[0]{z_h^{n}}_{\Omega^b}^2
      \\
      &
      \le \bigg| (f^{f,n}, u_h^{f,n})_{\Omega^f} + (f^{b,n}, d_tu_h^{b,n})_{\Omega^b} + (g^{b,n}, p_h^{p,n})_{\Omega^b}
      \\
      & \hspace{2em}      
      - (d_tu_h^{f,n}, u_h^{f,n})_{\Omega^f}
      - a_h^b(\boldsymbol{u}_h^{b,n}, d_t\boldsymbol{u}_h^{b,n})
      - c_h((d_tp_h^{p,n}, d_tp_h^{b,n}), \alpha p_h^{p,n} - p_h^{b,n})
      - (c_0d_t p_h^{p,n}, p_h^{p,n})_{\Omega^b}\bigg|.      
    \end{split}
  \end{equation}
  The Cauchy--Schwarz and Young's inequalities,
  \cref{eq:dpoincareineq-b}, \cref{eq:ah-coercive-j},
  \cref{eq:tnormppbound}, and the definitions of $X^{n}$ \cref{eq:defXn}
  and $A_n$ \cref{eq:definition-Ai} yields
  \begin{subequations}
    \begin{align}
      \label{eq:CS-then-need-dtbounds-a-new}
      |(f^{f,n}, u_h^{f,n})_{\Omega^f}|
      &\le \norm[0]{f^{f,n}}_{\Omega^f} \norm[0]{u_h^{f,n}}_{\Omega^f}
        \le \norm[0]{f^{f,n}}_{\Omega^f} \sqrt{2} A_{n},
      \\
      \label{eq:CS-then-need-dtbounds-b-new}
      |(g^{b,n}, p_h^{p,n})_{\Omega^b}|
      &\le \norm[0]{g^{b,n}}_{\Omega^b} c_{td}\mu^f\kappa^{-1}\norm[0]{z_h^{n}}_{\Omega^b}
        \le \tfrac{1}{2} c_{td}^2 \mu^f\kappa^{-1} \norm[0]{g^{b,n}}_{\Omega^b}^2
        + \tfrac{1}{2}\mu^f\kappa^{-1}\norm[0]{z_h^{n}}_{\Omega^b}^2,
      \\ 
      \label{eq:CS-then-need-dtbounds-c-new}
      |(f^{b,n}, d_tu_h^{b,n})_{\Omega^b}|
      &\le \norm[0]{f^{b,n}}_{\Omega^b} \norm[0]{d_tu_h^{b,n}}_{\Omega^b}
      \le c_p \norm[0]{f^{b,n}}_{\Omega^b} \tnorm{d_t\boldsymbol{u}_h^{b,n}}_{v,b}
      \\ \nonumber
      &\le c_p (c_{ae}^b\mu^b)^{-1/2} \norm[0]{f^{b,n}}_{\Omega^b} a_h^b\del[0]{d_t\boldsymbol{u}_h^{b,n},d_t\boldsymbol{u}_h^{b,n} }^{1/2}
      \\ \nonumber
      &\le c_p (c_{ae}^b\mu^b)^{-1/2} \norm[0]{f^{b,n}}_{\Omega^b} (2X^{n-1})^{1/2}
      \\
      \label{eq:CS-then-need-dtbounds-d-new}
      |(d_tu_h^{f,n}, u_h^{f,n})_{\Omega^f}|
      &\le \norm[0]{d_tu_h^{f,n}}_{\Omega^f} \norm[0]{u_h^{f,n}}_{\Omega^f},
      \\
      \label{eq:CS-then-need-dtbounds-e-new}
      |a_h^b(\boldsymbol{u}_h^{b,n}, d_t\boldsymbol{u}_h^{b,n})|
      &\le a_h^b(\boldsymbol{u}_h^{b,n}, \boldsymbol{u}_h^{b,n})^{1/2} a_h^b(d_t\boldsymbol{u}_h^{b,n}, d_t\boldsymbol{u}_h^{b,n})^{1/2},
      \\
      \label{eq:CS-then-need-dtbounds-f-new}
      |c_h((d_tp_h^{p,n}, d_tp_h^{b,n}), \alpha p_h^{p,n} - p_h^{b,n})|
      &\le \lambda^{-1/2}\norm[0]{d_t(\alpha p_h^{p,n} - p_h^{b,n})}_{\Omega^b}\lambda^{-1/2}\norm[0]{\alpha p_h^{p,n} - p_h^{b,n}}_{\Omega^b},
      \\
      \label{eq:CS-then-need-dtbounds-g-new}
      |(c_0d_t p_h^{p,n}, p_h^{p,n})_{\Omega^b}|
      &\le c_0^{1/2} \norm[0]{p_h^{p,n}}_{\Omega^b} c_0^{1/2} \norm[0]{d_t p_h^{p,n}}_{\Omega^b}.
    \end{align}    
  \end{subequations}
  Furthermore, combining
  \cref{eq:CS-then-need-dtbounds-d-new,eq:CS-then-need-dtbounds-e-new,eq:CS-then-need-dtbounds-f-new,eq:CS-then-need-dtbounds-g-new}
  and using the definitions of $A_n$ and $X^n$, we find
  \begin{multline*}
      |(d_tu_h^{f,n}, u_h^{f,n})_{\Omega^f}| + |a_h^b(\boldsymbol{u}_h^{b,n}, d_t\boldsymbol{u}_h^{b,n})| 
      + |c_h((d_tp_h^{p,n}, d_tp_h^{b,n}), \alpha p_h^{p,n} - p_h^{b,n})|
      + |(c_0d_t p_h^{p,n}, p_h^{p,n})_{\Omega^b}|
      \\
      \le 2 A_{n} (X^{n-1})^{1/2}.    
  \end{multline*}
  Combining the above inequality,
  \cref{eq:CS-then-need-dtbounds-a-new,eq:CS-then-need-dtbounds-b-new,eq:CS-then-need-dtbounds-c-new}
  with \cref{eq:almostuhfnp1bound}, and using
  \cref{eq:boundinguhfm} and \cref{eq:defGn},
  \begin{align*}
    &\tfrac{1}{2}c_{ae}^f\mu^f\tnorm{\boldsymbol{u}_h^{f,n}}_{v,f}^2
      + \gamma \mu^f \kappa^{-1/2} \norm[0]{ (\bar{u}_h^{f,n} - d_t\bar{u}_h^{b,n})^t }_{\Gamma_I}^2
      + \tfrac{1}{2} \mu^f\kappa^{-1}\norm[0]{z_h^{n}}_{\Omega^b}^2
    \\
    &\le \norm[0]{f^{f,n}}_{\Omega^f} \sqrt{2} A_{n} + \tfrac{1}{2} c_{td}^2 \mu^f \kappa^{-1} \norm[0]{g^{b,n}}_{\Omega^b}^2
      + c_p(c_{ae}^b \mu^b)^{-1/2} \norm[0]{f^{b,n}}_{\Omega^b} (2X^{n-1})^{1/2} + 2A_{n}(X^{n-1})^{1/2}
    \\
    &\le \norm[0]{f^{f,n}}_{\Omega^f} H + \tfrac{1}{2} c_{td}^2 \mu^f \kappa^{-1} \norm[0]{g^{b,n}}_{\Omega^b}^2
      + c_p(c_{ae}^b \mu^b)^{-1/2} \norm[0]{f^{b,n}}_{\Omega^b} (2G^{n-1})^{1/2} + H(2G^{n-1})^{1/2}.
  \end{align*}
  We therefore conclude \cref{eq:boundinguhfntnorm}.
\end{proof}

We arrive at the main result of this section.

\begin{theorem}[Well-posedness]
  \label{thm:well-posedness}
  Assume the data satisfies
  \begin{multline}
    \label{eq:dataassump}
    \max\del[2]{H^2,\norm[0]{f^f}_{\ell^{\infty}(J;\Omega^f)} H
    + \tfrac{1}{2} c_{td}^2 \mu^f \kappa^{-1} \norm[0]{g^b}_{\ell^{\infty}(J;\Omega^b)}^2
    + c_p(c_{ae}^b \mu^b)^{-1/2} \norm[0]{f^b}_{\ell^{\infty}(J;\Omega^b)} (2G^{N-1})^{1/2}
    + H(2G^{N-1})^{1/2}}
    \\
    \le \tfrac{1}{4}(\mu^fc_{ae}^f)^2\min\del[1]{ \frac{1}{c_{si,2}(c_{pq}^2 + c_{si,4}^2)}, \frac{1}{2c_w} },    
  \end{multline}
  and that $f^{b,0}=0$ and $g^{b,0}=0$. Then, starting with
  $\boldsymbol{u}_h^{0} = 0$ and $\boldsymbol{p}_h^{p,0} = 0$,
  \cref{eq:hdgfd} defines a unique solution. We furthermore have the
  following uniform bound (in $n$ and $h$) for $1 \le n \le N$:
  \begin{multline}
    \label{eq:bound-vel}
    \tfrac{1}{2}c_{ae}^f\mu^f\tnorm{\boldsymbol{u}_h^{f,n}}_{v,f}^2
    + \gamma \mu^f \kappa^{-1/2} \norm[0]{ (\bar{u}_h^{f,n} - d_t\bar{u}_h^{b,n})^t }_{\Gamma_I}^2
    + \tfrac{1}{2}\mu^f\kappa^{-1}\norm[0]{z_h^{n}}_{\Omega^b}^2
    \\
    \le
    \tfrac{1}{4}(\mu^fc_{ae}^f)^2\min\del[1]{ \frac{1}{c_{si,2}(c_{pq}^2 + c_{si,4}^2)}, \frac{1}{2c_w} }.
  \end{multline}
  Furthermore, for $1 \le n \le N$:
  \begin{equation}
    \label{eq:bound-ubpp}
    c_{ae}^b\mu^b\tnorm{\boldsymbol{u}_h^{b,n}}_{v,b}^2
    + \lambda^{-1} \norm[0]{\alpha p_h^{p,n} - p_h^{b,n}}_{\Omega^b}^2
    + c_0 \norm[0]{p_h^{p,n}}_{\Omega^b}^2          
    \le \tfrac{1}{4}(\mu^fc_{ae}^f)^2\min\del[1]{ \frac{1}{c_{si,2}(c_{pq}^2 + c_{si,4}^2)}, \frac{1}{2c_w} }.    
  \end{equation}
  The pressures are bounded as:
  \begin{subequations}
    \begin{align}
      \label{eq:ppnbound}
      \tnorm{\boldsymbol{p}_h^{p,n}}_{1,h,b}^2
      \le&
           \tfrac{1}{2}c_{pd}^2\mu^f (c_{ae}^f)^2\kappa^{-1}\min\del[1]{ \frac{1}{c_{si,2}(c_{pq}^2 + c_{si,4}^2)}, \frac{1}{2c_w} },
      \\
      \label{eq:pfbnbound}
      c_b\tnorm{\boldsymbol{p}_h^{n+1}}_q
      \le&
           c_p\norm[0]{f^{f}}_{\ell^{\infty}(J;L^2(\Omega^f))} + c_p\norm[0]{f^{b}}_{\ell^{\infty}(J;L^2(\Omega^b))}
           + c_p(G^{N-1})^{1/2}
      \\ \nonumber
         &+ \tfrac{1}{2}\mu^f c_{ae}^f c_w \min\del[1]{ \frac{1}{c_{si,2}(c_{pq}^2 + c_{si,4}^2)}, \frac{1}{2c_w} }
      \\ \nonumber
         &+ \del[3]{c_{ac}^f\mu^f + c_{ac}^b(c_{ae}^f/c_{ae}^b)^{1/2}(\mu^f/\mu^b)^{1/2}
           + (\tfrac{1}{2}c_{ae}^f\gamma\kappa^{-1/2})^{1/2}\mu^f} \times
      \\ \nonumber
         & \hspace{15em} \sbr[2]{\min\del[1]{ \frac{\mu^f c_{ae}^f}{2c_{si,2}(c_{pq}^2 + c_{si,4}^2)}, \frac{c_{ae}^f\mu^f}{4c_w} }}^{1/2}.
    \end{align}
  \end{subequations}
\end{theorem}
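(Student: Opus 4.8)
The proof is an induction on the time level $n$ that propagates the smallness bound \cref{eq:assumpuhfkfirst}, with the data condition \cref{eq:dataassump} calibrated so that this propagation loses no constant. Since at each time level \cref{eq:hdgfd} is a square linear system on the finite-dimensional space $\boldsymbol{X}_h$, existence is equivalent to uniqueness, and uniqueness is supplied by \cref{lem:uniqueness} once its hypothesis on $\norm[0]{u_h^{f,n}\cdot n}_{\Gamma_{IN}^f}$ is met. I would therefore prove by induction on $n$ the statement: the solution of \cref{eq:hdgfd} exists and is unique for $0 \le k \le n$, and \cref{eq:assumpuhfkfirst} holds for $0\le k\le n$. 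The base case $n=0$ is immediate from $\boldsymbol{u}_h^0 = 0$ (and the induced $\boldsymbol{p}_h^{b,0}=\boldsymbol{0}$, $z_h^0=0$).

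For the inductive step, assume the statement at level $n\le N-1$. First, \cref{eq:dtrpoincareineq} with $r=2$ together with \cref{eq:assumpuhfkfirst} at $k=n$ give $\norm[0]{u_h^{f,n}\cdot n}_{\Gamma_{IN}^f}\le c_{si,2}\tnorm{\boldsymbol{u}_h^{f,n}}_{v,f}\le \tfrac12\mu^f c_{ae}^f(c_{pq}^2+c_{si,4}^2)^{-1}$, so \cref{lem:uniqueness} yields a unique, hence existing, solution at level $n+1$. Next, since the inductive hypothesis supplies \cref{eq:assumpuhfkfirst} on $0\le k\le n$, \cref{lem:boundXn}, \cref{cor:Xnbound}, and \cref{lem:uhfk-uhfnp1bounds} all apply; I evaluate \cref{eq:boundinguhfntnorm} at level $n+1$, which invokes \cref{prop:coercivitythahf} at level $n$ only (via the bound just established) and involves $G^n$ whose defining quantities are data only. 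Bounding each data factor by its $\ell^\infty(J;\cdot)$-norm and $G^n$ by $G^{N-1}$ (using that $F^m$, hence $G^m$, is nondecreasing in $m$), the right-hand side of \cref{eq:boundinguhfntnorm} is precisely the second entry of the $\max$ in \cref{eq:dataassump}. Hence $\tfrac12 c_{ae}^f\mu^f\tnorm{\boldsymbol{u}_h^{f,n+1}}_{v,f}^2\le \tfrac14(\mu^f c_{ae}^f)^2\min(\cdots)$, which rearranges to \cref{eq:assumpuhfkfirst} at $k=n+1$, closing the induction; the bound \cref{eq:bound-vel} then follows directly from \cref{eq:boundinguhfntnorm} and \cref{eq:dataassump}.

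With the induction complete, the remaining estimates are corollaries of bounds already in hand. For \cref{eq:bound-ubpp}: \cref{eq:boundinguhfm} gives $2A_n^2\le H^2$, coercivity of $a_h^b$ (\cref{eq:ah-coercive-j}) bounds the left-hand side of \cref{eq:bound-ubpp} by $2A_n^2$, and $H^2\le\tfrac14(\mu^f c_{ae}^f)^2\min(\cdots)$ by \cref{eq:dataassump}. For \cref{eq:ppnbound}: chain \cref{eq:tnormppbound}, which gives $\tnorm{\boldsymbol{p}_h^{p,n}}_{1,h,b}\le c_{pd}\mu^f\kappa^{-1}\norm[0]{z_h^n}_{\Omega^b}$, with the bound on $\norm[0]{z_h^n}_{\Omega^b}$ contained in \cref{eq:bound-vel}. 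For \cref{eq:pfbnbound}: apply the inf-sup condition \cref{eq:inf-sup-b}; for $\boldsymbol{v}_h\in\widehat{\boldsymbol{V}}_h$ the form $b_h^I$ vanishes on $\Gamma_I$, so \cref{eq:hdgfd-a} expresses $b_h(\boldsymbol{v}_h,\boldsymbol{p}_h^{n+1})$ through $(f^{n+1},v_h)_{\Omega}$, $(d_tu_h^{n+1},v_h)_{\Omega^f}$, $t_h(u_h^{f,n};\boldsymbol{u}_h^{f,n+1},\boldsymbol{v}_h^f)$, $a_h(\boldsymbol{u}_h^{n+1},\boldsymbol{v}_h)$, and $a_h^I(\cdots)$, which are controlled using \cref{eq:dpoincareineq,eq:dpoincareineq-b}, the continuity \cref{eq:ah-continuity-j} with $\tnorm{\cdot}_{v',j}\le c_e\tnorm{\cdot}_{v,j}$, the structure of $t_h$ together with \cref{eq:assumpuhfkfirst}, and the bounds \cref{eq:bound-vel}, \cref{eq:bound-ubpp}, and $\norm[0]{d_tu_h^{f,n+1}}_{\Omega^f}^2\le X^n\le G^{N-1}$ (\cref{cor:Xnbound}).

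The main obstacle is the bootstrap in the inductive step: the right-hand side of \cref{eq:dataassump} must exactly match the bound produced by \cref{eq:boundinguhfntnorm}, so that \cref{eq:assumpuhfkfirst} reproduces itself at the next level with no slack, which is also what keeps \cref{prop:coercivitythahf} and \cref{lem:uniqueness} available there. The delicate part is bookkeeping which constant in \cref{eq:dataassump} pairs with which term of \cref{eq:boundinguhfntnorm}, together with the observation that $G^n$ depends only on the data (through $F^0$ and $F^n$) so the induction does not feed back on itself through $G$; once this is arranged, the individual term estimates for \cref{eq:pfbnbound} are routine Cauchy--Schwarz and Young's inequality applications combined with the cited mesh-independent inequalities.
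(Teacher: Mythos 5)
Your proposal is correct and follows essentially the same route as the paper's proof: \cref{eq:bound-vel} from \cref{eq:boundinguhfntnorm} plus \cref{eq:dataassump}, uniqueness via \cref{lem:uniqueness} through \cref{eq:dtrpoincareineq}, \cref{eq:bound-ubpp} from \cref{eq:boundinguhfm} and coercivity, \cref{eq:ppnbound} from \cref{eq:tnormppbound}, and \cref{eq:pfbnbound} from the inf-sup condition \cref{eq:inf-sup-b} with term-by-term bounds. The only difference is presentational: you make explicit the induction that propagates \cref{eq:assumpuhfkfirst} and the existence-from-uniqueness argument for the square linear system, both of which the paper leaves implicit; this is a faithful and arguably more careful rendering of the same argument.
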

\begin{proof}
  The bound \cref{eq:bound-vel} follows directly from
  \cref{eq:boundinguhfntnorm} and \cref{eq:dataassump}. To prove
  uniqueness we immediately note that \cref{eq:bound-vel} and
  \cref{eq:dtrpoincareineq} result in
  \begin{equation*}
    \norm[0]{u_h^{f,k}\cdot n}_{\Gamma_{IN}^f}
    \le c_{si,2} \tnorm{\boldsymbol{u}_h^{f,k}}_{v,f}
    \le \mu^f c_{ae}^f / (2(c_{pq}^2 + c_{si,4}^2)).
  \end{equation*}
  Uniqueness therefore follows from Lemma \ref{lem:uniqueness}. The
  bound \cref{eq:bound-ubpp} follows from \cref{eq:boundinguhfm},
  \cref{eq:ah-coercive-j}, and \cref{eq:dataassump}. The bound
  \cref{eq:ppnbound} is a direct consequence of \cref{eq:tnormppbound}
  and \cref{eq:bound-vel}. Finally, we consider the pressure bound
  \cref{eq:pfbnbound}. From \cref{eq:inf-sup-b} and \cref{eq:hdgfd} we
  obtain:
  \begin{equation}
    \label{eq:usinginfsuppq}
    c_b\tnorm{\boldsymbol{p}_h^{n+1}}_q
    \le 
    \sup_{\boldsymbol{0}\neq \boldsymbol{v}_h\in \widehat{\boldsymbol{V}}_h}
          \frac{|b_h(\boldsymbol{v}_h, \boldsymbol{p}_h^{n+1})|}{\tnorm{\boldsymbol{v}_h}_v}
    \le
    \sup_{\boldsymbol{0}\neq \boldsymbol{v}_h\in \widehat{\boldsymbol{V}}_h}
          \frac{B}{\tnorm{\boldsymbol{v}_h}_v}
  \end{equation}
  where
  \begin{multline*}
    B = |(f^{n+1}, v_h)_{\Omega}| + |(d_tu_h^{n+1}, v_h)_{\Omega^f}|
    + |t_h(u_h^{f,n}; \boldsymbol{u}_h^{f,n+1}, \boldsymbol{v}_h^f)|
    + |a_h(\boldsymbol{u}_h^{n+1}, \boldsymbol{v}_h)|
    \\
    + |a_h^I((\bar{u}_h^{f,n+1},d_t\bar{u}_h^{b,n+1}),(\bar{v}_h^f,\bar{v}_h^b))|
    + |b_h^I((\bar{v}_h^f,\bar{v}_h^b), \bar{p}_h^{p,n+1})|.
  \end{multline*}
  First, note that
  $b_h^I((\bar{v}_h^f,\bar{v}_h^b), \bar{p}_h^{p,n+1})=0$ by the
  definition of $\widehat{\boldsymbol{V}}_h$. Using the
  Cauchy--Schwarz inequality, \cref{eq:dpoincareineq},
  \cref{eq:dpoincareineq-b}, \cref{eq:upperboundthdif},
  \cref{eq:ah-continuity-j}:
  \begin{align*}
    |(f^{n+1}, v_h)_{\Omega}|
    &\le c_p\norm[0]{f^{f,n+1}}_{\Omega^f}\tnorm{\boldsymbol{v}_h^f}_{v,f} + c_p\norm[0]{f^{b,n+1}}_{\Omega^b}\tnorm{\boldsymbol{v}_h^b}_{v,b},
    \\
    |(d_tu_h^{n+1}, v_h)_{\Omega^f}|
    & \le c_p\norm[0]{d_tu_h^{n+1}}_{\Omega^f}\tnorm{\boldsymbol{v}_h^f}_{v,f},
    \\
    |t_h(u_h^{f,n}; \boldsymbol{u}_h^{f,n+1}, \boldsymbol{v}_h^f)|
    &\le c_w \tnorm{\boldsymbol{u}_h^{f,n}}_{v,f} \tnorm{\boldsymbol{u}_h^{f,n+1}}_{v,f}\tnorm{\boldsymbol{v}_h^f}_{v,f},
    \\
    |a_h(\boldsymbol{u}_h^{n+1}, \boldsymbol{v}_h)|
    &\le c_{ac}^f\mu^f\tnorm{\boldsymbol{u}_h^{f,n+1}}_{v,f}\tnorm{\boldsymbol{v}_h^f}_{v,f}
      + c_{ac}^b\mu^b\tnorm{\boldsymbol{u}_h^{b,n+1}}_{v,b}\tnorm{\boldsymbol{v}_h^b}_{v,b},
    \\
    |a_h^I((\bar{u}_h^{f,n+1},d_t\bar{u}_h^{b,n+1}),(\bar{v}_h^f,\bar{v}_h^b))|
    &\le \gamma\mu^f\kappa^{-1/2}\norm[0]{(\bar{u}_h^{f,n+1}-d_t\bar{u}_h^{b,n+1})^t}_{\Gamma_I}
      \tnorm{\boldsymbol{v}_h}_{v}.
  \end{align*}
  Combined now with \cref{eq:usinginfsuppq}, Corollary
  \ref{cor:Xnbound}, \cref{eq:bound-vel}, and \cref{eq:bound-ubpp} we
  obtain the result \cref{eq:pfbnbound}.
\end{proof}

\section{Error analysis}
\label{sec:erroranalysis}

For the error analysis we first define interpolation operators to
decompose the errors. For scalar functions, we denote by $\Pi_Q^j$,
$\bar{\Pi}_Q^j$, and $\bar{\Pi}_{Q^{0}}^b$ the $L^2$-projection
operators onto $Q_h^j$, $\bar{Q}_h^j$, and $\bar{Q}_h^{b0}$,
respectively. For vector valued functions, we define
$\Pi_V^j:H(\text{div},\Omega^j) \cap [L^r(\Omega^j)]^d \to V_h^j$, for $r > 2$ and
$j=f,b$, to be the interpolation operator to the
Brezzi--Douglas--Marini (BDM) finite element spaces \cite[Section
III.3]{Brezzi:book} and
$\bar{\Pi}_V^j:[L^2(\mathcal{F}^j)]^d \to \bar{V}_h^j$ to be the
$L^2$-projection onto $\bar{V}_h^j$. It is known that
\begin{equation*}
  (q_h, \nabla \cdot \Pi_V^j u^j)_K = (q_h, \nabla \cdot u^j)_K \quad
  \forall q_h \in P_{k-1}(K), K \in \mathcal{T}^j,
  \quad
  \langle \bar{q}_h, n \cdot \Pi_V^j u^j\rangle_F = \langle \bar{q}_h, n \cdot u^j\rangle_F \quad
  \forall \bar{q}_h \in P_{k}(F), F \in \mathcal{F}^j,
\end{equation*}
and
\begin{align*}
  &\text{if } u^j|_K \in [H^{k+1}(K)]^d:
  && \norm[0]{u^j - \Pi_V^j u^j}_{m,K} \le Ch_K^{l-m} \norm[0]{u^j}_{l,K}, \quad m=0,1,2, \quad \max\{1, m\} \le l \le k+1,
  \\
  &\text{if } u^j|_{K} \in [W_{\infty}^1(K)]^d:
  && \norm[0]{u^j - \Pi_V^j u^j}_{L^{\infty}(K)} \le Ch_K |u|_{W_{\infty}^1(K)}.
\end{align*}
For time stepping index $n$, we
introduce the following notation for the errors:
\begin{subequations}
  \label{eq:errorsplitting}
  \begin{align}
    u^{j,n} - u_h^{j,n} &= (u^{j,n} - \Pi_V^j u^{j,n}) - (u_h^{j,n} - \Pi_V^j u^{j,n}) 
    =: e_{u^j}^{I,n} - e_{u^j}^{h,n},
    \\
    \bar{u}^{j,n} - \bar{u}_h^{j,n} &= (\bar{u}^{j,n} - \bar{\Pi}_V^{j}\bar{u}^{j,n}) - (\bar{u}_h^{j,n} - \bar{\Pi}_V^{j}\bar{u}^{j,n}) 
    =: \bar{e}_{u^j}^{I,n} - \bar{e}_{u^j}^{h,n},
    \\
    z^n - z_h^n &= (z^n - \Pi_V^b z^n) - (z_h^n - \Pi_V^b z^n) 
    =: e_z^{I,n} - e_z^{h,n},
    \\
    p^{j,n} - p_h^{j,n} &= (p^{j,n} - \Pi_Q^jp^{j,n}) - (p_h^{j,n} - \Pi_Q^jp^{j,n}) 
    =: e_{p^j}^{I,n} - e_{p^j}^{h,n},
    \\
    \bar{p}^{j,n} - \bar{p}_h^{j,n} &= (\bar{p}^{j,n} - \bar{\Pi}_Q^j\bar{p}^{j,n}) - (\bar{p}_h^{j,n} - \bar{\Pi}_Q^jp^{j,n}) 
    =: \bar{e}_{p^j}^{I,n} - \bar{e}_{p^j}^{h,n},
    \\
    p^{p,n} - p_h^{p,n} &= (p^{p,n} - \Pi_Q^bp^{p,n}) - (p_h^{p,n} - \Pi_Q^bp^{p,n}) 
    =: e_{p^p}^{I,n} - e_{p^p}^{h,n},
    \\
    \bar{p}^{p,n} - \bar{p}_h^{p,n} &= (\bar{p}^{p,n} - \bar{\Pi}_{Q^0}^b\bar{p}^{p,n}) - (\bar{p}_h^{p,n} - \bar{\Pi}_{Q^0}^b\bar{p}^{p,n}) 
    =: \bar{e}_{p^p}^{I,n} - \bar{e}_{p^p}^{h,n},
  \end{align}  
\end{subequations}
and define $e_u^{\omega,n}$ and $e_p^{\omega,n}$ such that
$e_u^{\omega,n}|_{\Omega^j} = e_{u^j}^{\omega,n}$ and
$e_p^{\omega,n}|_{\Omega^j} = e_{p^j}^{\omega,n}$, for $\omega = I, h$.

The following lemma now determines the error equations.

\begin{lemma}[Error equations]
  \label{lem:errorequation}
  Suppose that
  $\cbr[0]{(\boldsymbol{u}_h^n, \boldsymbol{p}_h^n, z_h^{n},
    \boldsymbol{p}_h^{p,n})}_n$ are the solutions to
  \cref{eq:hdgfd}. Furthermore, assume that $(u,p,z,p^p)$ is the
  solution to
  \cref{eq:navierstokes,eq:biot,eq:interface,eq:bcs,eq:ics}, with
  $u_0^j(x) = 0$ and $p_0^p(x)=0$, on the time interval
  $J=(0,T]$. Define
  $\boldsymbol{u}:=(u,u|_{\Gamma_0^f},u|_{\Gamma_0^b})$,
  $\boldsymbol{p} := (p,p|_{\Gamma_0^f},p|_{\Gamma_0^b})$, and
  $\boldsymbol{p}^p := (p^p,p^p|_{\Gamma_0^b})$. Then, for $n \ge 0$
  and for all
  $(\boldsymbol{v}_h, \boldsymbol{q}_h, w_h, \boldsymbol{q}_h^p)\in
  \boldsymbol{X}_h$ we have:
  \begin{subequations}
    \label{eq:reduced-error-eqs}
    \begin{align}
      \label{eq:reduced-error-eq1}
      & (d_t e_{u^f}^{h,n+1}, v_h^f)_{\Omega^f}
        + t_h(u_h^{f,n}; \boldsymbol{e}_{u^{f}}^{h,n+1}, \boldsymbol{v}_h^f)
        + a_h(\boldsymbol{e}_{u}^{h,n+1}, \boldsymbol{v}_h)+ b_h(\boldsymbol{v}_h, \boldsymbol{e}_{p}^{h,n+1})
      \\ \nonumber
      & \hspace{2em}
        + a_h^I((\bar{e}_{u^f}^{h,n+1},d_t\bar{e}_{u^b}^{h,n+1}),(\bar{v}_h^f,\bar{v}_h^b))      + b_h^I((\bar{v}_h^f,\bar{v}_h^b), \bar{e}_{p^p}^{h,n+1})
      \\ \nonumber
      & \hspace{2em}
        = (\partial_t u^{f,n+1}-(\Delta t)^{-1} (\Pi_V^{f} u^{f,n+1} - \Pi_V^{f} u^{f,n}), v_h^f)_{\Omega^f} + a_h(\boldsymbol{e}_{u}^{I,n+1}, \boldsymbol{v}_h)
      \\ \nonumber
      & \hspace{4em}
        + a_h^I((0, \partial_t \bar{u}^{b,n+1} - d_t \bar{u}^{b,n+1} ),(\bar{v}_h^f,\bar{v}_h^b))  
        + \sbr[1]{t_h(u^{f,n+1}; \boldsymbol{u}^{f,n+1}, \boldsymbol{v}_h^f) - t_h(u^{f,n}; \boldsymbol{u}^{f,n+1}, \boldsymbol{v}_h^f)}
      \\ \nonumber
      & \hspace{4em}
        + t_h(u^{f,n}; \boldsymbol{e}_{u^f}^{I,n+1}, \boldsymbol{v}_h^f)
        + \sbr[1]{ t_h(u^{f,n}; \boldsymbol{\Pi}_{V}^{f} \boldsymbol{u}^{f,n+1}, \boldsymbol{v}_h^f)
        - t_h(u_h^{f,n}; \boldsymbol{\Pi}_{V}^{f} \boldsymbol{u}^{f,n+1}, \boldsymbol{v}_h^f) },
      \\     
      \label{eq:reduced-error-eq2}
      & b_h^f(\boldsymbol{e}_{u^f}^{h,n+1}, \boldsymbol{q}_h^f) + b_h^b(d_t\boldsymbol{e}_{u^b}^{h,n+1}, \boldsymbol{q}_h^b)
        + c_h((d_t e_{p^p}^{h,n+1},d_t e_{p^b}^{h,n+1}),q_h^b) = 0,
      \\
      \label{eq:reduced-error-eq3}
      &(c_0d_t e_{p^p}^{h,n+1}, q_h^p)_{\Omega^b}
        + c_h((d_te_{p^p}^{h,n+1},d_te_{p^b}^{h,n+1}), \alpha q_h^p)     - b_h^{b}((e_z^{h,n+1}, 0), \boldsymbol{q}_h^p)      
        - b_h^I((\bar{e}_{u^f}^{h,n+1}, d_t\bar{e}_{u^b}^{h,n+1}), \bar{q}_h^p)
      \\ \nonumber
      & \hspace{2em}
        =(c_0(\partial_tp^{p,n+1} - d_t p^{p,n+1} ), q_h^p)_{\Omega^b}
        + c_h((\partial_tp^{p,n+1}-d_t p^{p,n+1},\partial_tp^{b,n+1}-d_t p^{b,n+1}), \alpha q_h^p)
      \\ \nonumber
      & \hspace{4em}
        - b_h^I((0, \partial_t \bar{u}^{b,n+1} - d_t \bar{u}^{b,n+1}), \bar{q}_h^p),
      \\
      \label{eq:reduced-error-eq4}
      &(\mu^f\kappa^{-1} e_z^{h,n+1}, w_h)_{\Omega^b} + b_h^{b}((w_h, 0), \boldsymbol{e}_{p^p}^{h,n+1})
        =(\mu^f\kappa^{-1} e_z^{I,n+1}, w_h)_{\Omega^b}.
    \end{align}
  \end{subequations}
\end{lemma}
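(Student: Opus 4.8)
The plan is a Galerkin-orthogonality argument. First I would verify that the exact solution, by the regularity assumed on it and by \cref{eq:navierstokes,eq:biot,eq:interface,eq:bcs}, satisfies the spatially-consistent analogue of \cref{eq:hdgfd} in which each discrete time derivative is replaced by the corresponding continuous one (so $d_tu_h$ by $\partial_tu$ and $d_t\bar u_h^{b,n+1}$ by $\partial_t\bar u^{b,n+1}$). Concretely, testing the momentum equation of \cref{eq:navierstokes} at $t_{n+1}$ with $v_h^f$, integrating the stress term elementwise and using continuity of $\sigma^fn$ across interior facets, the interface conditions \cref{eq:I_momentum,eq:I_bjs} and the boundary conditions \cref{eq:bcs}, the forms $a_h,a_h^I,b_h,b_h^I$ reproduce exactly the spatial consistency already established for the Stokes/Biot HDG method in \cite{Cesmelioglu:2023a} (see also \cite{Rhebergen:2018a,Cesmelioglu:2022a}); for the convective term, $\nabla\cdot u^{f,n+1}=0$ together with $u^{f,n+1}=\bar u^{f,n+1}$ on all facets makes the jump and upwind terms in $t_h$ drop out, so that $t_h(u^{f,n+1};\boldsymbol u^{f,n+1},\boldsymbol v_h^f)=(\nabla\cdot(u^{f,n+1}\otimes u^{f,n+1}),v_h)_{\Omega^f}$. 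The Biot momentum, Darcy and mass-conservation equations are handled the same way; for \cref{eq:reduced-error-eq2} one additionally applies $d_t$ to the compressibility identity (valid at every time level) and separates the $b_h^f$ part exactly as in \cref{eq:splitting-bfdt}.

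Next I would subtract \cref{eq:hdgfd} from these consistent identities. By linearity of $a_h,a_h^I,b_h,b_h^I,c_h$ each form splits into an $I$-part and an $h$-part; the time-derivative differences produce the stated residuals through the identities $d_tu_h^{f,n+1}-\partial_tu^{f,n+1}=d_te_{u^f}^{h,n+1}-\bigl(\partial_tu^{f,n+1}-(\Delta t)^{-1}(\Pi_V^f u^{f,n+1}-\Pi_V^f u^{f,n})\bigr)$ and $d_t\bar u_h^{b,n+1}-\partial_t\bar u^{b,n+1}=d_t\bar e_{u^b}^{h,n+1}-d_t\bar e_{u^b}^{I,n+1}-(\partial_t\bar u^{b,n+1}-d_t\bar u^{b,n+1})$, with the analogous identities for $p^p$ and $p^b$. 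The only genuinely new manipulation is for $t_h$: writing $\boldsymbol u_h^{f,n+1}=\boldsymbol{\Pi}_V^f\boldsymbol u^{f,n+1}+\boldsymbol e_{u^f}^{h,n+1}$ and $\boldsymbol u^{f,n+1}=\boldsymbol{\Pi}_V^f\boldsymbol u^{f,n+1}+\boldsymbol e_{u^f}^{I,n+1}$ and inserting $u^{f,n}$ into the advecting slot, one decomposes $t_h(u_h^{f,n};\boldsymbol u_h^{f,n+1},\boldsymbol v_h^f)-t_h(u^{f,n+1};\boldsymbol u^{f,n+1},\boldsymbol v_h^f)$ into precisely the four pieces in \cref{eq:reduced-error-eq1}: $t_h(u_h^{f,n};\boldsymbol e_{u^f}^{h,n+1},\boldsymbol v_h^f)$ (retained on the left), the advecting-velocity error $t_h(u^{f,n};\boldsymbol{\Pi}_V^f\boldsymbol u^{f,n+1},\boldsymbol v_h^f)-t_h(u_h^{f,n};\boldsymbol{\Pi}_V^f\boldsymbol u^{f,n+1},\boldsymbol v_h^f)$, the second-argument interpolation term $t_h(u^{f,n};\boldsymbol e_{u^f}^{I,n+1},\boldsymbol v_h^f)$, and the time-lag term $t_h(u^{f,n+1};\boldsymbol u^{f,n+1},\boldsymbol v_h^f)-t_h(u^{f,n};\boldsymbol u^{f,n+1},\boldsymbol v_h^f)$.

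Finally I would discard the interpolation-error contributions that do not appear on the right. These vanish from the defining properties of the projections: $b_h^j(\boldsymbol v_h^j,\boldsymbol e_{p^j}^{I,n+1})=0$, $b_h^b((w_h,0),\boldsymbol e_{p^p}^{I,n+1})=0$ and $b_h^b((e_z^{I,n+1},0),\boldsymbol q_h^p)=0$ by the commuting-divergence properties of $\Pi_V^j$ and $\Pi_V^b$ together with $L^2$-orthogonality of the interior projection errors to $P_{k-1}(K)$ and of the facet projections to $P_k(F)$ (using also that $\bar e_{p^p}^{I,n+1}=0$ on $\Gamma_P^b$, since $\bar p^{p,n+1}$ and $\bar{\Pi}_{Q^0}^b\bar p^{p,n+1}$ both vanish there); the $\Gamma_I$ contributions $a_h^I((\bar e_{u^f}^{I,n+1},d_t\bar e_{u^b}^{I,n+1}),\cdot)$ and $b_h^I((\bar e_{u^f}^{I,n+1},d_t\bar e_{u^b}^{I,n+1}),\cdot)$ vanish because $\bar e_{u^f}^{I,n+1}$ and $\bar e_{u^b}^{I,n+1}$ are $L^2(\Gamma_I)$-orthogonal to $[P_k(F)]^d$, which contains the relevant tangential and normal traces of the discrete facet functions; and the $c_h$ and $c_0$ interpolation terms vanish by $L^2$-orthogonality to $P_{k-1}(K)$. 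Collecting the surviving terms gives \cref{eq:reduced-error-eqs}. I expect the main difficulty to be purely organisational — tracking signs consistently across the four coupled equations and invoking each projection identity in the right place — since the spatial consistency is already available from the cited works and the decomposition of $t_h$ is dictated by its structure.
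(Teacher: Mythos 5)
Your proposal is correct and follows essentially the same route as the paper: establish consistency of the semi-discrete scheme, subtract the fully discrete equations, split the errors into interpolation and discrete parts, cancel the interpolation contributions using the commuting/orthogonality properties of $\Pi_V^j$, $\bar{\Pi}_V^j$, $\Pi_Q^j$, $\bar{\Pi}_Q^j$, $\bar{\Pi}_{Q^0}^b$, decompose the convective term exactly as in \cref{eq:thdecompositionerror}, and obtain \cref{eq:reduced-error-eq2} by applying $d_t$ to the mass-conservation identity on $\Omega^b$ (using the vanishing initial data) while separating the $b_h^f$ part. No gaps.
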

\begin{proof}
  It can easily be shown, by standard arguments, that the
  semi-discrete HDG method \cref{eq:hdgsd} is consistent. Therefore,
  substituting the exact solution at time level $t=t_{n+1}$ into
  \cref{eq:hdgsd} and subtracting \cref{eq:hdgfd} from the result, we
  obtain:
  \begin{subequations}
    \label{eq:uminuhdif}
    \begin{align}
      & (\partial_t u^{f,n+1} - d_tu_h^{f,n+1}, v_h^f)_{\Omega^f}
      + t_h(u^{f,n+1}; \boldsymbol{u}^{f,n+1}, \boldsymbol{v}_h^f)
      - t_h(u_h^{f,n}; \boldsymbol{u}_h^{f,n+1}, \boldsymbol{v}_h^f)
      \\ \nonumber
      & \hspace{2em} 
      + a_h(\boldsymbol{u}^{n+1} - \boldsymbol{u}_h^{n+1}, \boldsymbol{v}_h)
      + b_h(\boldsymbol{v}_h, \boldsymbol{p}^{n+1} - \boldsymbol{p}_h^{n+1})
      \\ \nonumber
      & \hspace{2em} 
      + a_h^I((\bar{u}^{f,n+1} - \bar{u}_h^{f,n+1}, \partial_t \bar{u}^{b,n+1} - d_t\bar{u}_h^{b,n+1}),(\bar{v}_h^f,\bar{v}_h^b))      
      + b_h^I((\bar{v}_h^f,\bar{v}_h^b), \bar{p}^{p,n+1} - \bar{p}_h^{p,n+1})
      = 0,
      \\
      &b_h(\boldsymbol{u}^{n+1} - \boldsymbol{u}_h^{n+1}, \boldsymbol{q}_h)
      + c_h((p^{p,n+1} - p_h^{p,n+1},p^{b,n+1} - p_h^{b,n+1}),q_h^b)
      = 0,
      \\
      &(c_0 (\partial_t p^{p,n+1} - d_t p_h^{p,n+1}), q_h^p)_{\Omega^b}
      + c_h((\partial_t p^{p,n+1} - d_tp_h^{p,n+1}, \partial_t p^{b,n+1} - d_tp_h^{b,n+1}), \alpha q_h^p)
      \\ \nonumber
      & \hspace{2em}
      - b_h^{b}((z^{n+1} - z_h^{n+1}, 0), \boldsymbol{q}_h^p)      
      - b_h^I((\bar{u}^{f,n+1} - \bar{u}_h^{f,n+1}, \partial_t \bar{u}^{b,n+1} - d_t\bar{u}_h^{b,n+1}), \bar{q}_h^p)
      =0,
      \\
      &\mu^f\kappa^{-1} (z^{n+1} - z_h^{n+1}, w_h)_{\Omega^b} + b_h^{b}((w_h, 0), \boldsymbol{p}^{p,n+1} - \boldsymbol{p}_h^{p,n+1})=0.
    \end{align}    
  \end{subequations}
  Noting that
  $\partial_t u^{f,n+1} - d_tu_h^{f,n+1} = - d_te_{u^f}^{h,n+1} +
  \sbr[1]{\partial_t u^{f,n+1} - (\Delta
    t)^{-1}(\Pi_V^fu^{f,n+1}-\Pi_V^fu^{f,n})}$, and similar for the
  other time derivative terms, and using the error decomposition
  \cref{eq:errorsplitting}, we can write \cref{eq:uminuhdif} as
  \begin{subequations}
    \label{eq:error-eqs-1}
    \begin{align}
      \label{eq:error-eq1-1}
      & (d_t e_{u^f}^{h,n+1}, v_h^f)_{\Omega^f}
        - t_h(u^{f,n+1}; \boldsymbol{u}^{f,n+1}, \boldsymbol{v}_h^f)
        + t_h(u_h^{f,n}; \boldsymbol{u}_h^{f,n+1}, \boldsymbol{v}_h^f)
      \\ \nonumber
      & \hspace{2em}
        + a_h(\boldsymbol{e}_{u}^{h,n+1}, \boldsymbol{v}_h)
        + b_h(\boldsymbol{v}_h, \boldsymbol{e}_{p}^{h,n+1})
        + a_h^I((\bar{e}_{u^f}^{h,n+1},d_t\bar{e}_{u^b}^{h,n+1}),(\bar{v}_h^f,\bar{v}_h^b))      
        + b_h^I((\bar{v}_h^f,\bar{v}_h^b), \bar{e}_{p^p}^{h,n+1})
      \\ \nonumber
      & \hspace{2em}
        = (\partial_t u^{f,n+1} - (\Delta t)^{-1} (\Pi_V^{f} u^{f,n+1} - \Pi_V^{f} u^{f,n}), v_h^f)_{\Omega^f}
        + a_h(\boldsymbol{e}_{u}^{I,n+1}, \boldsymbol{v}_h)
        + b_h(\boldsymbol{v}_h, \boldsymbol{e}_{p}^{I,n+1})
      \\ \nonumber
      & \hspace{4em}
        + a_h^I((\bar{e}_{u^f}^{I,n+1}, \partial_t \bar{u}^{b,n+1} - (\Delta t)^{-1} (\bar{\Pi}_V^{b} \bar{u}^{b,n+1} - \bar{\Pi}_V^{b} \bar{u}^{b,n} ) ),(\bar{v}_h^f,\bar{v}_h^b))      
        + b_h^I((\bar{v}_h^f,\bar{v}_h^b), \bar{e}_{p^p}^{I,n+1}) ,
      \\
      \label{eq:error-eq2-1}
      & b_h(\boldsymbol{e}_u^{h,n+1}, \boldsymbol{q}_h)
        + c_h((e_{p^p}^{h,n+1},e_{p^b}^{h,n+1}),q_h^b)
        =  b_h(\boldsymbol{e}_u^{I,n+1}, \boldsymbol{q}_h)
        + c_h((e_{p^p}^{I,n+1},e_{p^b}^{I,n+1}),q_h^b),
      \\
      \label{eq:error-eq3-1}
      &(c_0d_t e_{p^p}^{h,n+1}, q_h^p)_{\Omega^b}
        + c_h((d_te_{p^p}^{h,n+1},d_te_{p^b}^{h,n+1}), \alpha q_h^p)
        - b_h^{b}((e_z^{h,n+1}, 0), \boldsymbol{q}_h^p)      
      \\ \nonumber
      & \hspace{2em}
        - b_h^I((\bar{e}_{u^f}^{h,n+1}, d_t\bar{e}_{u^b}^{h,n+1}), \bar{q}_h^p)
      \\ \nonumber
      & \hspace{2em}
        =(c_0\sbr[0]{\partial_t p^{p,n+1} - (\Delta t)^{-1} (\Pi_Q^bp^{p,n+1} - \Pi_Q^bp^{p,n}) }, q_h^p)_{\Omega^b}
      \\ \nonumber
      & \hspace{4em}
        + c_h((\partial_t p^{p,n+1}- (\Delta t)^{-1} (\Pi_Q^{b} p^{p,n+1} - \Pi_Q^{b} p^{p,n})),
        \partial_t p^{b,n+1} - (\Delta t)^{-1} (\Pi_Q^{b} p^{b,n+1} - \Pi_Q^{b} p^{b,n})), \alpha q_h^p)
      \\ \nonumber
      & \hspace{4em} 
        - b_h^{b}((e_z^{I,n+1}, 0), \boldsymbol{q}_h^p)      
        - b_h^I((\bar{e}_{u^f}^{I,n+1},  \partial_t \bar{u}^{b,n+1} - (\Delta t)^{-1} (\bar{\Pi}_V^{b} \bar{u}^{b,n+1} - \bar{\Pi}_V^{b} \bar{u}^{b,n} )), \bar{q}_h^p),
      \\
      \label{eq:error-eq4-1}
      &(\mu^f\kappa^{-1} e_z^{h,n+1}, w_h)_{\Omega^b} + b_h^{b}((w_h, 0), \boldsymbol{e}_{p^p}^{h,n+1})
        =(\mu^f\kappa^{-1} e_z^{I,n+1}, w_h)_{\Omega^b} + b_h^{b}((w_h, 0), \boldsymbol{e}_{p^p}^{I,n+1}) .
    \end{align}
  \end{subequations}
  By properties of $\Pi_V^j$ and $\bar{\Pi}_V^j$, $j=f,b$, we have
  that $b_h(\boldsymbol{e}_{u}^{I,n+1}, \boldsymbol{q}_h) = 0$ for all
  $\boldsymbol{q}_h \in \boldsymbol{Q}_h$ and
  $b_h^b((e_z^{I,n+1},0), \boldsymbol{q}_h^p) = 0$ for all
  $\boldsymbol{q}_h^p \in \boldsymbol{Q}_h^{b0}$. Similarly,
  $c_h((e_{p^p}^{I,n+1}, e_{p^b}^{I,n+1}), q_h^b) = 0$ for all
  $q_h^b \in Q_h^b$ because $\Pi_{Q}^b$ is the $L^2$-projection onto
  $Q_h^b$, and $b_h(\boldsymbol{v}_h, \boldsymbol{e}_p^{I,n+1}) = 0$
  for all $\boldsymbol{v}_h \in \boldsymbol{V}_h$,
  $b_h^b((w_h, 0), \boldsymbol{e}_{p^p}^{I,n+1}) = 0$ for all
  $w_h \in V_h^b$, and
  $b_h^I((\bar{v}_h^f, \bar{v}_h^b), \bar{e}_{p^p}^{I,n+1}) = 0$ for
  all $(\bar{v}_h^f, \bar{v}_h^b) \in \bar{V}_h^f \times \bar{V}_h^b$
  because $\Pi_Q^j$, $\bar{\Pi}_Q^j$, $j=f,b$, and $\bar{\Pi}_{Q^0}^b$
  are $L^2$-projections. Furthermore, since $\bar{\Pi}_V^j$,
  $\bar{\Pi}_Q^j$, $j=f,b$, $\bar{\Pi}_{Q^0}^b$ are $L^2$-projections,
  \begin{equation*}
    \begin{split}
      a_h^I((\bar{e}_{u^f}^{I,n+1}, (\Delta t)^{-1} (\bar{\Pi}_V^{b} \bar{u}^{b,n+1} - \bar{\Pi}_V^{b} \bar{u}^{b,n} )
      - \partial_t \bar{u}^{b,n+1} ),(\bar{v}_h^f,\bar{v}_h^b)) 
      &= a_h^I((0, d_t \bar{u}^{b,n+1}- \partial_t \bar{u}^{b,n+1} ),(\bar{v}_h^f,\bar{v}_h^b)),
      \\
      b_h^I((\bar{e}_{u^f}^{I,n+1}, (\Delta t)^{-1} (\bar{\Pi}_V^{b} \bar{u}^{b,n+1} - \bar{\Pi}_V^{b} \bar{u}^{b,n} )
      - \partial_t \bar{u}^{b,n+1}), \bar{q}_h^p)    
      &= b_h^I((0, d_t \bar{u}^{b,n+1} - \partial_t \bar{u}^{b,n+1}), \bar{q}_h^p),
      \\
      (c_0((\Delta t)^{-1} (\Pi_Q^{b}p^{p,n+1} - \Pi_Q^{b}p^{p,n}) - \partial_t p^{p,n+1} ), q_h^p)_{\Omega^b}
      &= (c_0(d_tp^{p,n+1} - \partial_t p^{p,n+1} ), q_h^p)_{\Omega^b},
      \\ 
      c_h(((\Delta t)^{-1} (\Pi_Q^{b}p^{p,n+1} - \Pi_Q^{b}p^{p,n})-\partial_t p^{p,n+1},(\Delta t)^{-1} (\Pi_Q^{b}p^{b,n+1}
      &- \Pi_Q^{b}p^{b,n})-\partial_t p^{b,n+1}), \alpha q_h^p)
      \\
      =c_h((&d_tp^{p,n+1}-\partial_t p^{p,n+1},d_tp^{b,n+1}-\partial_t p^{b,n+1}), \alpha q_h^p).      
    \end{split}
  \end{equation*}
  Therefore, we can write \cref{eq:error-eqs-1} as
  \begin{subequations}
    \begin{align}
      \label{eq:errors-eq1-1}
      & (d_t e_{u^f}^{h,n+1}, v_h^f)_{\Omega^f}
        - t_h(u^{f,n+1}; \boldsymbol{u}^{f,n+1}, \boldsymbol{v}_h^f)
        + t_h(u_h^{f,n}; \boldsymbol{u}_h^{f,n+1}, \boldsymbol{v}_h^f)
      \\ \nonumber
      & \hspace{2em}
        + a_h(\boldsymbol{e}_{u}^{h,n+1}, \boldsymbol{v}_h)
        + b_h(\boldsymbol{v}_h, \boldsymbol{e}_{p}^{h,n+1})
        + a_h^I((\bar{e}_{u^f}^{h,n+1},d_t\bar{e}_{u^b}^{h,n+1}),(\bar{v}_h^f,\bar{v}_h^b))      
        + b_h^I((\bar{v}_h^f,\bar{v}_h^b), \bar{e}_{p^p}^{h,n+1})
      \\ \nonumber
      & \hspace{2em}
        = (\partial_t u^{f,n+1} - (\Delta t)^{-1} (\Pi_V^{f} u^{f,n+1} - \Pi_V^{f} u^{f,n}), v_h^f)_{\Omega^f}
        + a_h(\boldsymbol{e}_{u}^{I,n+1}, \boldsymbol{v}_h)
      \\ \nonumber
      & \hspace{4em}
        + a_h^I((0, \partial_t \bar{u}^{b,n+1} - d_t\bar{u}^{b,n+1}) ),(\bar{v}_h^f,\bar{v}_h^b)),
      \\
      \label{eq:errors-eq2-1}
      & b_h(\boldsymbol{e}_u^{h,n+1}, \boldsymbol{q}_h)
        + c_h((e_{p^p}^{h,n+1},e_{p^b}^{h,n+1}),q_h^b)
        =  0,
      \\
      \label{eq:errors-eq3-1}
      &(c_0d_t e_{p^p}^{h,n+1}, q_h^p)_{\Omega^b}
        + c_h((d_te_{p^p}^{h,n+1},d_te_{p^b}^{h,n+1}), \alpha q_h^p)
        - b_h^{b}((e_z^{h,n+1}, 0), \boldsymbol{q}_h^p)      
        - b_h^I((\bar{e}_{u^f}^{h,n+1}, d_t\bar{e}_{u^b}^{h,n+1}), \bar{q}_h^p)
      \\ \nonumber
      & \hspace{2em}
        =(c_0(\partial_t p^{p,n+1} - d_tp^{p,n+1}), q_h^p)_{\Omega^b}
        + c_h((\partial_t p^{p,n+1}- d_tp^{p,n+1}, \partial_t p^{b,n+1} - d_tp^{b,n+1}), \alpha q_h^p)
      \\ \nonumber
      & \hspace{4em} 
        - b_h^I((0,  \partial_t \bar{u}^{b,n+1} - d_t\bar{u}^{b,n+1}), \bar{q}_h^p),
      \\
      \label{eq:errors-eq4-1}
      &(\mu^f\kappa^{-1} e_z^{h,n+1}, w_h)_{\Omega^b} + b_h^{b}((w_h, 0), \boldsymbol{e}_{p^p}^{h,n+1})
        =(\mu^f\kappa^{-1} e_z^{I,n+1}, w_h)_{\Omega^b}.
    \end{align}
  \end{subequations}
  To simplify this further, note that
  \begin{equation}
    \label{eq:thdecompositionerror}
    \begin{split}
      - t_h(u^{f,n+1}; \boldsymbol{u}^{f,n+1}, \boldsymbol{v}_h^f) +
      t_h(u_h^{f,n}; \boldsymbol{u}_h^{f,n+1}, \boldsymbol{v}_h^f)
      =&-t_h(u^{f,n+1}; \boldsymbol{u}^{f,n+1}, \boldsymbol{v}_h^f) + t_h(u^{f,n}; \boldsymbol{u}^{f,n+1}, \boldsymbol{v}_h^f)
      \\
      &- t_h(u^{f,n}; \boldsymbol{e}_{u^f}^{I,n+1}, \boldsymbol{v}_h^f)
      -  t_h(u^{f,n}; \boldsymbol{\Pi}_{V}^{f} \boldsymbol{u}^{f,n+1}, \boldsymbol{v}_h^f)
      \\
      &+t_h(u_h^{f,n}; \boldsymbol{\Pi}_{V}^{f} \boldsymbol{u}^{f,n+1}, \boldsymbol{v}_h^f)
      + t_h(u_h^{f,n}; \boldsymbol{e}_{u^{f}}^{h,n+1}, \boldsymbol{v}_h^f).      
    \end{split}
  \end{equation}
  Furthermore, splitting \cref{eq:errors-eq2-1} into its terms on
  $\Omega^f$ and $\Omega^b$ and applying the discrete time derivative
  on $\Omega^b$, we can write \cref{eq:errors-eq2-1} as
  \begin{equation}
    \label{eq:bhfdtbhb}
    b_h^f(\boldsymbol{e}_{u^f}^{h,n+1}, \boldsymbol{q}_h^f)
    + b_h^b(d_t\boldsymbol{e}_{u^b}^{h,n+1}, \boldsymbol{q}_h^b) + c_h((d_t e_{p^p}^{h,n+1}, d_t e_{p^b}^{h,n+1}), q_h^b)
    = 0.
  \end{equation}
  Combining
  \cref{eq:errors-eq1-1,eq:errors-eq3-1,eq:errors-eq4-1,eq:thdecompositionerror,eq:bhfdtbhb}
  we find \cref{eq:reduced-error-eqs}.
\end{proof}

The following auxiliary result will be useful to prove the error
estimate in Theorem \ref{thm:errorEstimate}.

\begin{lemma}
  \label{lem:BDM-inf-sup}
  Let $\boldsymbol{p}_h^{p,n}$, $z_h^n$, $p^p$, and $z^n$ be as
  defined in Lemma \ref{lem:errorequation}. There exists a $C>0$,
  independent of $h$, $\Delta t$, and $n$, such that for $n\ge 0$:
  \begin{subequations}
    \begin{align}
      \label{eq:BDM-inf-sup}
      \norm[0]{e_{p^p}^{h,n+1}}_{\Omega^b}&\le C \mu^f\kappa^{-1} (\norm[0]{e_z^{h,n+1}}_{\Omega^b} + \norm[0]{e_z^{I,n+1}}_{\Omega^b}),
      \\
      \label{eq:trace-estimate}
      \norm[0]{\bar{e}_{p^p}^{h,n+1}}_{\Gamma_I}&\le C \mu^f\kappa^{-1} (\norm[0]{e_z^{h,n+1}}_{\Omega^b} + \norm[0]{e_z^{I,n+1}}_{\Omega^b}).
    \end{align}    
  \end{subequations}  
\end{lemma}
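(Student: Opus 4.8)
The plan is to read both bounds off the last error equation \cref{eq:reduced-error-eq4} by a duality argument for $b_h^b$. Writing \cref{eq:reduced-error-eq4} in the form
\[
  b_h^b((w_h,0),\boldsymbol{e}_{p^p}^{h,n+1}) = \big(\mu^f\kappa^{-1}(e_z^{I,n+1}-e_z^{h,n+1}),w_h\big)_{\Omega^b}
  \qquad \forall\, w_h\in V_h^b,
\]
the first thing to record is that $\boldsymbol{e}_{p^p}^{h,n+1}=(e_{p^p}^{h,n+1},\bar e_{p^p}^{h,n+1})$ belongs to $\boldsymbol{Q}_h^{b0}$: indeed $p_h^{p,n+1}\in Q_h^b$ and $\bar p_h^{p,n+1}\in\bar Q_h^{b0}$ are components of the discrete solution, and $\Pi_Q^b$, $\bar\Pi_{Q^0}^b$ map into the same two spaces. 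This is precisely what makes \cref{eq:inf-sup-bp} (and the BDM-based argument behind \cref{eq:tnormppbound}) applicable.

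For \cref{eq:BDM-inf-sup} I would apply \cref{eq:inf-sup-bp} with $\boldsymbol{q}_h^p=\boldsymbol{e}_{p^p}^{h,n+1}$, insert the displayed identity, and use Cauchy--Schwarz on the right-hand side to get $c_{bp}\tnorm{\boldsymbol{e}_{p^p}^{h,n+1}}_{q,b}\le\mu^f\kappa^{-1}\norm[0]{e_z^{I,n+1}-e_z^{h,n+1}}_{\Omega^b}$. Since $\norm[0]{e_{p^p}^{h,n+1}}_{\Omega^b}\le\tnorm{\boldsymbol{e}_{p^p}^{h,n+1}}_{q,b}$ by the definition of $\tnorm{\cdot}_{q,b}$, and $\norm[0]{e_z^{I,n+1}-e_z^{h,n+1}}_{\Omega^b}\le\norm[0]{e_z^{h,n+1}}_{\Omega^b}+\norm[0]{e_z^{I,n+1}}_{\Omega^b}$, estimate \cref{eq:BDM-inf-sup} follows with $C=c_{bp}^{-1}$.

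For \cref{eq:trace-estimate} the $\tnorm{\cdot}_{q,b}$ estimate is too weak, since its boundary term carries the factor $h_K$ rather than $h_K^{-1}$, so I would instead repeat the argument proving \cref{eq:tnormppbound} (i.e.\ \cite[Lemma~2]{Cesmelioglu:2023b}), which from the very same identity, with $e_z^{h,n+1}-e_z^{I,n+1}$ now in the role of $z_h^{n+1}$, yields the stronger bound $\tnorm{\boldsymbol{e}_{p^p}^{h,n+1}}_{1,h,b}\le c_{pd}\mu^f\kappa^{-1}\norm[0]{e_z^{h,n+1}-e_z^{I,n+1}}_{\Omega^b}$. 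It then remains to control $\norm[0]{\bar e_{p^p}^{h,n+1}}_{\Gamma_I}$ by $\tnorm{\boldsymbol{e}_{p^p}^{h,n+1}}_{1,h,b}$ uniformly in $h$. Splitting $\bar e_{p^p}^{h,n+1}=(\bar e_{p^p}^{h,n+1}-e_{p^p}^{h,n+1})+e_{p^p}^{h,n+1}$ on $\Gamma_I$, the first piece is bounded facet by facet by $h^{1/2}\big(\sum_{K}h_K^{-1}\norm[0]{e_{p^p}^{h,n+1}-\bar e_{p^p}^{h,n+1}}_{\partial K}^2\big)^{1/2}\le C h^{1/2}\tnorm{\boldsymbol{e}_{p^p}^{h,n+1}}_{1,h,b}$, while for the element part I would invoke a discrete (broken) trace inequality $\norm[0]{q_h}_{\Gamma_I}^2\le C\big(\norm[0]{q_h}_{\Omega^b}^2+\norm[0]{q_h}_{1,h,\Omega^b}^2\big)$, then \cref{eq:qhbp}, then the elementary comparison $\norm[0]{q_h}_{1,h,\Omega^b}\le C\tnorm{\boldsymbol{q}_h}_{1,h,b}$ (each facet jump being controlled by the two element--facet differences, using $h_F\simeq h_K$ and $\bar q_h=0$ on $\Gamma_P^b$). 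Combining these gives $\norm[0]{\bar e_{p^p}^{h,n+1}}_{\Gamma_I}\le C\tnorm{\boldsymbol{e}_{p^p}^{h,n+1}}_{1,h,b}$ and hence \cref{eq:trace-estimate}.

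The only delicate step is this last passage from the element-based estimate to the $L^2(\Gamma_I)$ norm of the facet unknown without accruing negative powers of $h$: this is exactly why one must use the $H^1$-type bound rather than the plain inf-sup \cref{eq:inf-sup-bp}, and why a broken trace inequality is needed (an elementwise multiplicative trace inequality would leave an uncontrolled $h_K^{-1}\norm[0]{q_h}_K^2$ term). The remaining ingredients --- rewriting the error equation, checking $\boldsymbol{e}_{p^p}^{h,n+1}\in\boldsymbol{Q}_h^{b0}$, and the Cauchy--Schwarz/inf-sup estimates --- are routine.
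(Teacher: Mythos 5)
Your proposal is correct and follows essentially the same route as the paper: the first bound is read off \cref{eq:reduced-error-eq4} via the BDM duality for $b_h^b$ (the paper constructs the test function $w_h\in V_h^b\cap H(\mathrm{div},\Omega^b)$ with $\nabla\cdot w_h=e_{p^p}^{h,n+1}$ explicitly and checks that the facet term vanishes, rather than quoting \cref{eq:inf-sup-bp}, but that inf-sup condition is proved by exactly this construction), and the second bound rests on the $\tnorm{\cdot}_{1,h,b}$ control obtained by applying the argument behind \cref{eq:tnormppbound} to the error equation. For \cref{eq:trace-estimate} the paper simply cites \cite[Lemma~4]{Cesmelioglu:2023a}; your reconstruction (the $h^{1/2}$-weighted element--facet splitting plus a broken trace inequality, correctly avoiding the elementwise multiplicative version) is the standard argument behind that lemma.
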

\begin{proof}
  By the inf-sup condition of BDM elements
  (cf.~\cite{Brezzi:1985,Boffi:book}), there exists
  $w_h \in V_h^b \cap H(\text{div}, \Omega^b)$ such that
  $\nabla \cdot w_h = e_{p^p}^{h,n}$, $w_h\cdot n = 0$ on
  $\partial \Omega^b \setminus \Gamma_P^b$ and
  $\norm[0]{w_h}_{\Omega^b} \le C \norm[0]{e_{p^p}^{h,n}}_{\Omega^b}$,
  with $C>0$ independent of $h$, $\Delta t$, and
  $n$. \Cref{eq:BDM-inf-sup} follows by taking this $(w_h,0)$ in
  \cref{eq:reduced-error-eq4} and using the Cauchy--Schwarz inequality
  (note that
  $\langle \bar{e}_{p^p}^{h,n}, w_h \cdot n^b
  \rangle_{\partial\mathcal{T}^b}=0$ because
  $w_h \in V_h^b \cap H(\text{div},\Omega^b)$, $\bar{e}_{p^p}^{h,n}$
  is single-valued on element boundaries,
  $p^p=\bar{p}_h^p=\bar{\Pi}_{Q^0}^bp^p=0$ on $\Gamma_P^b$, and
  $w_h \cdot n = 0$ on $\partial\Omega^b \setminus \Gamma_P^b$). The
  proof of \cref{eq:trace-estimate} is given by
  \cite[Lemma~4]{Cesmelioglu:2023a}.
\end{proof}

The main result of this section is the following theorem.

\begin{theorem}
  \label{thm:errorEstimate}
  Suppose that
  $\{(\boldsymbol{u}_h^n, \boldsymbol{p}_h^n, z_h^{n},
  \boldsymbol{p}_h^{p,n})\}_n$ are the solutions to \cref{eq:hdgfd},
  that the assumptions of Lemma~\ref{lem:boundXn} hold and that
  $(u,p,z,p^p)$ is the solution to
  \cref{eq:navierstokes,eq:biot,eq:interface,eq:bcs,eq:ics}, with
  $u_0^j(x) = 0$, $j=f,b$ and $p_0^p(x)=0$, on the time interval
  $J=(0,T]$. Furthermore, let
  $\boldsymbol{u}:=(u,u|_{\Gamma_0^f},u|_{\Gamma_0^b})$,
  $\boldsymbol{p} := (p,p|_{\Gamma_0^f},p|_{\Gamma_0^b})$, and
  $\boldsymbol{p}^p := (p^p,p^p|_{\Gamma_0^b})$. If 
  \begin{equation}
    \label{eq:regularity-assumptions}
    \begin{split}
      u^f &\in H^{1}(J,[H^k(\Omega^f)]^d)\cap H^{2}(J,[L^2(\Omega^f)]^d) \cap \ell^{\infty}(J,[W^{1,3}(\Omega^f) \cap H^{k+1}(\Omega^f)]^d),
      \\
      u^b &\in H^{2}(J,[H^{1}(\Omega^b)]^d) \cap W^{2,1}(J,[H^{k+1}(\Omega^b)]^d),
      \\
      z &\in \ell^{\infty}(J, H^{k}(\Omega^b)),
      \qquad 
      p^p, p^b \in H^{2}(J, L^2(\Omega^b)),      
    \end{split}
  \end{equation}
  then,
  \begin{multline}
    \label{eq:ehufubppzestimate}
    \norm[0]{e_{u^f}^{h,m}}_{\Omega^f}^2 + a_h^b(\boldsymbol{e}_{u^b}^{h,m},\boldsymbol{e}_{u^b}^{h,m})
    + \lambda^{-1}\norm[0]{\alpha e_{p^p}^{h,m} - e_{p^b}^{h,m}}_{\Omega^b}^2
    + c_0 \norm[0]{e_{p^p}^{h,m}}_{\Omega^b}^2
    \\
    + \Delta t\sum_{i=1}^m \sbr[3]{\mu^f\tnorm{\boldsymbol{e}_{u^f}^{h,i}}_{v,f}^2
      + \gamma\mu^f\kappa^{-1/2} \norm[0]{\alpha \bar{e}_{u^f}^{h,i} - d_t \bar{e}_{u^b}^{h,i}}_{\Gamma_I}^2
      +  \mu^f\kappa^{-1}\norm[0]{e_z^{h,i}}_{\Omega^b}^2 }
    \le C_G \sbr[1]{h^{2k} +(\Delta t)^2},         
  \end{multline}
  with $C_G$ a constant resulting from a discrete Gr\"onwall
  inequality that depends on $T$ and the norms of $u^f$, $u^b$, $z$,
  $p^p$, and $p^b$ in \cref{eq:regularity-assumptions}. Moreover,
  \begin{equation}
    \label{eq:ehpbestimate}
    \tnorm{\boldsymbol{e}_{p^b}^{h,m}}_{q,b}
    \le C (\mu^b)^{1/2}a_h^b(\boldsymbol{e}_{u^b}^{h,m},\boldsymbol{e}_{u^b}^{h,m})^{1/2}
    + C \mu^b h^k\norm[0]{u^{b,m}}_{k+1,\Omega^b}.
  \end{equation}
\end{theorem}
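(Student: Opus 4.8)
The plan is to run the energy argument of \cref{lem:uhfk-uhfnp1bounds} on the error equations \cref{eq:reduced-error-eqs}. In \cref{eq:reduced-error-eq1,eq:reduced-error-eq2,eq:reduced-error-eq3,eq:reduced-error-eq4} I take $\boldsymbol{v}_h^f=\boldsymbol{e}_{u^f}^{h,n+1}$, $\boldsymbol{v}_h^b=d_t\boldsymbol{e}_{u^b}^{h,n+1}$, $\boldsymbol{q}_h^f=-\boldsymbol{e}_{p^f}^{h,n+1}$, $\boldsymbol{q}_h^b=-\boldsymbol{e}_{p^b}^{h,n+1}$, $w_h=e_z^{h,n+1}$, $\boldsymbol{q}_h^p=\boldsymbol{e}_{p^p}^{h,n+1}$, after splitting \cref{eq:reduced-error-eq2} into its $\Omega^f$-part and a $d_t$-differentiated $\Omega^b$-part, exactly as in the proof of \cref{lem:uhfk-uhfnp1bounds}. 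Adding the four identities, all $b_h$, $b_h^b$ and $b_h^I$ coupling terms cancel, the $c_h$ contributions combine into $\lambda^{-1}(d_t(\alpha e_{p^p}^{h,n+1}-e_{p^b}^{h,n+1}),\alpha e_{p^p}^{h,n+1}-e_{p^b}^{h,n+1})_{\Omega^b}$, and what remains on the left is $(d_te_{u^f}^{h,n+1},e_{u^f}^{h,n+1})_{\Omega^f}+t_h(u_h^{f,n};\boldsymbol{e}_{u^f}^{h,n+1},\boldsymbol{e}_{u^f}^{h,n+1})+a_h^f(\boldsymbol{e}_{u^f}^{h,n+1},\boldsymbol{e}_{u^f}^{h,n+1})+a_h^b(\boldsymbol{e}_{u^b}^{h,n+1},d_t\boldsymbol{e}_{u^b}^{h,n+1})+(c_0d_te_{p^p}^{h,n+1},e_{p^p}^{h,n+1})_{\Omega^b}+\gamma\mu^f\kappa^{-1/2}\norm[0]{(\bar e_{u^f}^{h,n+1}-d_t\bar e_{u^b}^{h,n+1})^t}_{\Gamma_I}^2+\mu^f\kappa^{-1}\norm[0]{e_z^{h,n+1}}_{\Omega^b}^2$; on the right sit the interpolation residual $a_h(\boldsymbol{e}_u^{I,n+1},\cdot)$, the time-truncation (consistency) residuals, the convection residuals of \cref{eq:reduced-error-eq1}, the interface residual of \cref{eq:reduced-error-eq3}, and $(\mu^f\kappa^{-1}e_z^{I,n+1},e_z^{h,n+1})_{\Omega^b}$.

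For the left-hand side, the hypotheses of \cref{lem:boundXn} and the well-posedness bound \cref{eq:bound-vel} together with \cref{eq:dtrpoincareineq} give $\norm[0]{u_h^{f,n}\cdot n}_{\Gamma_{IN}^f}\le\tfrac12\mu^fc_{ae}^f(c_{pq}^2+c_{si,4}^2)^{-1}$, so \cref{prop:coercivitythahf} yields $t_h(u_h^{f,n};\boldsymbol{e}_{u^f}^{h,n+1},\boldsymbol{e}_{u^f}^{h,n+1})+a_h^f(\boldsymbol{e}_{u^f}^{h,n+1},\boldsymbol{e}_{u^f}^{h,n+1})\ge\tfrac12c_{ae}^f\mu^f\tnorm{\boldsymbol{e}_{u^f}^{h,n+1}}_{v,f}^2$. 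Using $a(a-b)\ge\tfrac12(a^2-b^2)$ on the four discrete-time-derivative products (with $a_h^b$ symmetric and coercive for the displacement term), multiplying by $\Delta t$ and summing over $n=0,\dots,m-1$ — the $n=0$ data vanishing by assumption — telescopes the left-hand side to, up to fixed positive constants, the left-hand side of \cref{eq:ehufubppzestimate}.

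It remains to control the summed right-hand side. By Cauchy--Schwarz and Young's inequality: $a_h(\boldsymbol{e}_u^{I,n+1},\cdot)$ and $t_h(u^{f,n};\boldsymbol{e}_{u^f}^{I,n+1},\cdot)$ are $O(h^k)$ via \cref{eq:ah-continuity-j}, \cref{eq:upperboundthdif} and the BDM/$L^2$-projection approximation properties; the consistency terms $\partial_tu^{f,n+1}-d_t\Pi_V^fu^{f,n+1}$, $\partial_t\bar u^{b,n+1}-d_t\bar u^{b,n+1}$, $\partial_tp^{p,n+1}-d_tp^{p,n+1}$ and $t_h(u^{f,n+1};\boldsymbol{u}^{f,n+1},\cdot)-t_h(u^{f,n};\boldsymbol{u}^{f,n+1},\cdot)$ split into a Peano-kernel part of size $O(\Delta t)$ (using the $H^2(J;\cdot)$, $W^{2,1}(J;\cdot)$ and $\ell^\infty(J;W^{1,3}\cap H^{k+1})$ regularity of \cref{eq:regularity-assumptions}) plus a $d_t$-of-interpolation part of size $O(h^k)$; the interface residual of \cref{eq:reduced-error-eq3} is absorbed using \cref{eq:trace-estimate} of \cref{lem:BDM-inf-sup}; and the terms where $d_t$ falls on an interpolation error — including $a_h^b(\boldsymbol{e}_{u^b}^{I,n+1},d_t\boldsymbol{e}_{u^b}^{h,n+1})$ — are handled by summation-by-parts in time, transferring $d_t$ onto the smooth-in-time interpolation error at the cost of a boundary term absorbed into $a_h^b(\boldsymbol{e}_{u^b}^{h,m},\boldsymbol{e}_{u^b}^{h,m})$ by Young's inequality. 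After absorbing all $\varepsilon$-multiples of left-hand-side quantities, the remainder has the form $C(h^{2k}+(\Delta t)^2)+C\Delta t\sum_{i=1}^m(\text{left-hand-side energy at }t_i)$, and a discrete Gr\"onwall inequality gives \cref{eq:ehufubppzestimate} with $C_G$ depending on $T$ and the norms in \cref{eq:regularity-assumptions}. The main obstacle is the convection difference $t_h(u^{f,n};\boldsymbol{\Pi}_V^f\boldsymbol{u}^{f,n+1},\boldsymbol{v}_h^f)-t_h(u_h^{f,n};\boldsymbol{\Pi}_V^f\boldsymbol{u}^{f,n+1},\boldsymbol{v}_h^f)$: unlike in \cref{lem:boundXn}, its middle argument is not small, so after \cref{eq:upperboundthdif} and $u^{f,n}-u_h^{f,n}=e_{u^f}^{I,n}-e_{u^f}^{h,n}$ one is left with a genuine previous-time error contribution whose coefficient involves $\tnorm{\boldsymbol{\Pi}_V^f\boldsymbol{u}^{f,n+1}}_{v,f}$; it is here that the $W^{1,3}$-regularity of $u^f$ must be used to route this contribution through $\norm[0]{e_{u^f}^{h,n}}_{\Omega^f}$ so the Gr\"onwall step closes, in the same manner as \cite[Lemma~4.5]{Cesmelioglu:2023b}.

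Finally, \cref{eq:ehpbestimate} follows with no Gr\"onwall argument. Taking $\boldsymbol{v}_h^f=0$ and $\boldsymbol{v}_h^b\in\widetilde{\boldsymbol{V}}_h^b$ in \cref{eq:reduced-error-eq1} at time level $m$, all Navier--Stokes, interface and convection terms drop out and the $b_h$ term reduces to $b_h^b(\boldsymbol{v}_h^b,\boldsymbol{e}_{p^b}^{h,m})$, leaving $b_h^b(\boldsymbol{v}_h^b,\boldsymbol{e}_{p^b}^{h,m})=a_h^b(\boldsymbol{e}_{u^b}^{I,m}-\boldsymbol{e}_{u^b}^{h,m},\boldsymbol{v}_h^b)$. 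The inf-sup condition \cref{eq:inf-sup-bj} with $j=b$, continuity \cref{eq:ah-continuity-j}, the equivalence $\tnorm{\cdot}_{v',b}\le c_e\tnorm{\cdot}_{v,b}$ on $\boldsymbol{V}_h^b$, coercivity \cref{eq:ah-coercive-j} to replace $\tnorm{\boldsymbol{e}_{u^b}^{h,m}}_{v,b}$ by $(c_{ae}^b\mu^b)^{-1/2}a_h^b(\boldsymbol{e}_{u^b}^{h,m},\boldsymbol{e}_{u^b}^{h,m})^{1/2}$, and the interpolation bound $\tnorm{\boldsymbol{e}_{u^b}^{I,m}}_{v',b}\le Ch^k\norm[0]{u^{b,m}}_{k+1,\Omega^b}$ then yield \cref{eq:ehpbestimate}.
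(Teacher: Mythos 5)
Your proposal is correct and follows essentially the same route as the paper: the same choice of test functions in the error equations, coercivity of $t_h+a_h^f$ from Proposition~\ref{prop:coercivitythahf}, the $a(a-b)\ge(a^2-b^2)/2$ telescoping, term-by-term residual estimates (including the summation-by-parts treatment of $a_h^b(\boldsymbol{e}_{u^b}^{I,n+1},d_t\boldsymbol{e}_{u^b}^{h,n+1})$ and routing the lagged-convection difference through $\norm[0]{e_{u^f}^{h,n}}_{\Omega^f}$ with the $W^{1,3}$ regularity), a discrete Gr\"onwall inequality, and the inf-sup argument for the pressure bound.
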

\begin{proof}
  Set $\boldsymbol{v}_h^f = \boldsymbol{e}_{u^f}^{h,n+1}$,
  $\boldsymbol{v}_h^b = d_t \boldsymbol{e}_{u^b}^{h,n+1}$,
  $\boldsymbol{q}_h^f = -\boldsymbol{e}_{p^f}^{h,n+1}$,
  $\boldsymbol{q}_h^b = -\boldsymbol{e}_{p^b}^{h,n+1}$,
  $w_h = e_z^{h,n+1}$,
  $\boldsymbol{q}_h^p = \boldsymbol{e}_{p^p}^{h,n+1}$ in
  \cref{eq:reduced-error-eqs}, sum the equations, use the algebraic
  inequality $a(a-b) \ge (a^2-b^2)/2$ for the time-derivative terms,
  and use \cref{eq:coercivityahthresult}, to obtain:
  \begin{equation}
    \label{eq:sum-error-eqs}
    \begin{split}
      \tfrac{1}{2\Delta t}&
      (\norm[0]{e_{u^f}^{h,n+1}}_{\Omega^f}^2-\norm[0]{e_{u^f}^{h,n}}_{\Omega^f}^2) 
      +\tfrac{1}{2}c_{ae}^f\mu^f \tnorm{\boldsymbol{e}_{u^f}^{h,n+1}}_{v,f}^2
      + \tfrac{1}{2\Delta t}(a_h^b(\boldsymbol{e}_{u^b}^{h,n+1}, \boldsymbol{e}_{u^b}^{h,n+1})
      -a_h^b(\boldsymbol{e}_{u^b}^{h,n}, \boldsymbol{e}_{u^b}^{h,n})) 
      \\
      & \hspace{2em}
      + \gamma \mu^f \kappa^{-1/2} \norm[0]{ (\bar{e}_{u^f}^{h,n+1} - d_t\bar{e}_{u^b}^{h,n+1})^t }_{\Gamma_I}^2
      + \tfrac{\lambda^{-1}}{2\Delta t} (\norm[0]{\alpha e_{p^p}^{h,n+1} - e_{p^b}^{h,n+1}}_{\Omega^b}^2
      -\norm[0]{\alpha e_{p^p}^{h,n} - e_{p^b}^{h,n}}_{\Omega^b}^2)
      \\
      & \hspace{2em}
      + \tfrac{c_0}{2\Delta t} (\norm[0]{e_{p^p}^{h,n+1}}_{\Omega^b}^2
      -\norm[0]{e_{p^p}^{h,n}}_{\Omega^b}^2)
      + \mu^f\kappa^{-1}\norm[0]{e_z^{h,n+1}}_{\Omega^b}^2
      \\
      \le & I_1^{n} + I_2^{n} + I_3^{n} + I_4^{n},
    \end{split}
  \end{equation}
  where $e_{u^{f}}^{n} = u^{f,n} - u_h^{f,n}$ and where
  \begin{equation*}
    \begin{split}
      I_1^n &:=  I_{1a}^n + I_{1b}^n + I_{1c}^n + I_{1d}^n + I_{1e}^n + I_{1f}^n
      \\
      &:= (\partial_t u^{f,n+1} - (\Delta t)^{-1} (\Pi_V^{f} u^{f,n+1} - \Pi_V^{f} u^{f,n}), e_{u^f}^{h,n+1})_{\Omega^f}
      \\
      &\quad + \sbr[1]{t_h(u^{f,n+1}; \boldsymbol{u}^{f,n+1}, \boldsymbol{e}_{u^f}^{h,n+1}) - t_h(u^{f,n}; \boldsymbol{u}^{f,n+1}, \boldsymbol{e}_{u^f}^{h,n+1})}
      \\
      &\quad + t_h(u^{f,n}; \boldsymbol{e}_{u^f}^{I,n+1}, \boldsymbol{e}_{u^f}^{h,n+1}) +  \sbr[1]{t_h(u^{f,n}; \boldsymbol{\Pi}_{V}^{f} \boldsymbol{u}^{f,n+1}, \boldsymbol{e}_{u^f}^{h,n+1})
      - t_h(u_h^{f,n}; \boldsymbol{\Pi}_{V}^{f} \boldsymbol{u}^{f,n+1}, \boldsymbol{e}_{u^f}^{h,n+1})
      }
      \\
      &\quad + a_h^f(\boldsymbol{e}_{u^f}^{I,n+1}, \boldsymbol{e}_{u^f}^{h,n+1}) + a_h^b(\boldsymbol{e}_{u^b}^{I,n+1}, d_t\boldsymbol{e}_{u^b}^{h,n+1}),
      \\
      I_2^n &:= a_h^I((0, \partial_t \bar{u}^{b,n+1} - d_t \bar{u}^{b,n+1}),(\bar{e}_{u^f}^{h,n+1},d_t\bar{e}_{u^b}^{h,n+1})) ,
      \\
      I_3^n &:= I_{3a}^n + I_{3b}^n + I_{3c}^n
      \\
      &:=(c_0(\partial_t p^{p,n+1} - d_t p^{p,n+1}), e_{p^p}^{h,n+1})_{\Omega^b}
      \\
      & \hspace{2em}
      + c_h((\partial_t p^{p,n+1}-d_tp^{p,n+1}, \partial_t p^{b,n+1}-d_t p^{b,n+1}), \alpha e_{p^p}^{h,n+1})
      \\
      & \hspace{2em}
      - b_h^I((0, \partial_t \bar{u}^{b,n+1} - d_t \bar{u}^{b,n+1}), \bar{e}_{p^p}^{h,n+1}),
      \\
      I_4^n &:= (\mu^f\kappa^{-1} e_z^{I,n+1}, e_z^{h,n+1})_{\Omega^b} .      
    \end{split}
  \end{equation*}
  By definition of $I_{1a}^{n}$, the Cauchy--Schwarz inequality,
  \cref{eq:dpoincareineq}, the triangle inequality, approximation
  properties of the BDM interpolation operator ($\Pi_V^f$), Taylor's
  theorem, and Young's inequality,
  \begin{equation}
    \label{eq:I1a-estimate}
    \begin{split}
      |I_{1a}^{n}|
      &\le C\norm[0]{-d_t e_{u^f}^{I,n+1} + d_t u^{f,n+1} -\partial_t u^{f,n+1}}_{\Omega^f} \tnorm{\boldsymbol{e}_{u^f}^{h,n+1}}_{v,f} 
      \\
      &\le 2\psi \tnorm{\boldsymbol{e}_{u^f}^{h,n+1}}_{v,f}^2
      + \tfrac{C}{\psi}h^{2k}(\Delta t)^{-1} \norm[0]{\partial_t u^f}_{L^2(t_n, t_{n+1}; H^{k}(\Omega^f))}^2
      + \tfrac{C}{\psi}\Delta t \norm[0]{\partial_{tt} u^f}_{L^2(t_n, t_{n+1}; L^2(\Omega^f))}^2,      
    \end{split}
  \end{equation}
  where $\psi>0$ will be chosen later. Next, by
  \cref{eq:upperboundthdif} and Young's inequality,
  \begin{equation}
    \label{eq:I1b-estimate}
    \begin{split}
      |I_{1b}^n|
      &\le C\norm[0]{u^{f,n+1}-u^{f,n}}_{1,h,\Omega^f} \norm[0]{u^{f,n+1}}_{1,\Omega^f}\tnorm{\boldsymbol{e}_{u^f}^{h,n+1}}_{v,f}
      \\
      &\le C(\Delta t)^{1/2} \norm[0]{\partial_t u^f}_{L^2(t_n, t_{n+1}; H^1(\Omega^f))} \norm[0]{u^{f,n+1}}_{1,\Omega^f}
      \tnorm{\boldsymbol{e}_{u^f}^{h,n+1}}_{v,f}
      \\
      &\le \psi\tnorm{\boldsymbol{e}_{u^f}^{h,n+1}}_{v,f}^2
      + \frac{C}{\psi} \Delta t \norm[0]{\partial_t u^f}_{L^2(t_n, t_{n+1}; H^1(\Omega^f))}^2 \norm[0]{u^{f,n+1}}_{1,\Omega^f}^2.      
    \end{split}
  \end{equation}
  By \cref{eq:upperboundthdif}, approximation properties of the BDM
  interpolation operator ($\Pi_V^f$) and facet $L^2$-projection
  ($\bar{\Pi}_V^f$), and Young's inequality,
  \begin{equation}
    \label{eq:I1c-estimate}
    \begin{split}
      |I_{1c}^n|
      &\le C\norm[0]{u^{f,n}}_{1,h,\Omega^f}\tnorm{\boldsymbol{e}_{u^f}^{I,n+1}}_{v,f} \tnorm{\boldsymbol{e}_{u^f}^{h,n+1}}_{v,f}
      \\
      &\le C h^k \norm[0]{u^{f,n}}_{1,\Omega^f} \norm[0]{u^{f,n+1}}_{k+1,\Omega^f} \tnorm{\boldsymbol{e}_{u^f}^{h,n+1}}_{v,f}
      \\
      &\le \psi \tnorm{\boldsymbol{e}_{u^f}^{h,n+1}}_{v,f}^2
      + \frac{C}{\psi} h^{2k} \norm[0]{u^{f,n}}_{1,\Omega^f}^2 \norm[0]{u^{f,n+1}}_{k+1,\Omega^f}^2.      
    \end{split}
  \end{equation}  
  For $I_{1d}^n$ we have for $\psi > 0$ (see\cite[Appendix~C]{Cesmelioglu:2023b}):
  \begin{equation}
    \label{eq:I1d-estimate}
    |I_{1d}^n|
    \le 2\psi \tnorm{\boldsymbol{e}_{u^f}^{h,n+1}}_{v,f}^2
    + \frac{C}{\psi}h^{2k}\norm[0]{u^{f,n+1}}_{k+1,\Omega^f}^2 \norm[0]{u^{f,n}}_{k+1,\Omega^f}^2
    + \frac{C}{\psi}\norm[0]{e_{u^f}^{h,n}}_{\Omega^f}^2 \norm[0]{u^{f,n+1}}_{W_3^1(\Omega^f)}^2.
  \end{equation}
  For $I_{1e}^n$, using \cref{eq:ah-continuity-j}, Young's
  inequalities, and interpolation properties, we find
  \begin{equation}
    \label{eq:I1e-estimate}
    |I_{1e}^n|
    \le C \mu^f h^{2k} \norm[0]{u^{f,n+1}}_{k+1,\Omega^f}^2 + \tfrac{1}{8}c_{ae}^f\mu^f \tnorm{\boldsymbol{e}_{u^f}^{h,n+1}}_{v,f}^2.
  \end{equation}
  We postpone estimating $I_{1f}^n$ until later and proceed with
  estimating $I_2^n$. By the Cauchy--Schwarz inequality, Young's
  inequality, and the trace inequality,
  \begin{equation}
    \label{eq:I2-estimate}
    \begin{split}
      |I_2^n|
      &\le \gamma \mu^f \kappa^{-1/2}\norm[0]{d_t\bar{u}^{b,n+1} - \partial_t \bar{u}^{b,n+1}}_{\Gamma_I}
      \norm[0]{\bar{e}_{u^f}^{h,n+1} - d_t \bar{e}_{u^b}^{h,n+1}}_{\Gamma_I}
      \\
      &\le \tfrac{1}{2} \gamma \mu^f \kappa^{-1/2} \norm[0]{\bar{e}_{u^f}^{h,n+1} - d_t \bar{e}_{u^b}^{h,n+1}}_{\Gamma_I}^2
      + \tfrac{1}{2}\gamma \mu^f \kappa^{-1/2}\norm[0]{d_t\bar{u}^{b,n+1} - \partial_t \bar{u}^{b,n+1}}_{\Gamma_I}^2
      \\
      &\le \tfrac{1}{2} \gamma \mu^f \kappa^{-1/2} \norm[0]{\bar{e}_{u^f}^{h,n+1} - d_t \bar{e}_{u^b}^{h,n+1}}_{\Gamma_I}^2
      + C \mu^f \kappa^{-1/2}\Delta t \norm[0]{\partial_{tt} \bar{u}^b}_{L^2(t_n, t_{n+1}; L^2(\Gamma_I))}^2
      \\
      &\le \tfrac{1}{2} \gamma \mu^f \kappa^{-1/2} \norm[0]{\bar{e}_{u^f}^{h,n+1} - d_t \bar{e}_{u^b}^{h,n+1}}_{\Gamma_I}^2
      + C \mu^f \kappa^{-1/2}\Delta t \norm[0]{\partial_{tt} u^b}_{L^2(t_n, t_{n+1}; H^1(\Omega^b))}^2.       
    \end{split}
  \end{equation}
  By the Cauchy--Schwarz inequality, Lemma~\ref{lem:BDM-inf-sup},
  Young's inequality, and an argument similar to the estimate in
  \cref{eq:I1a-estimate},
  \begin{equation}
    \label{eq:I3a-estimate}
    \begin{split}
      |I_{3a}^n|
      &\le c_0 \norm[0]{d_t p^{p,n+1} - \partial_t p^{p,n+1}}_{\Omega^b} \norm[0]{e_{p^p}^{h,n+1}}_{\Omega^b}
      \\
      &\le c_0 \mu^f\kappa^{-1} C\norm[0]{d_t p^{p,n+1} - \partial_t p^{p,n+1}}_{\Omega^b} (\norm[0]{e_{z}^{h,n+1}}_{\Omega^b}
      + \norm[0]{e_{z}^{I,n+1}}_{\Omega^b})
      \\
      &\le \tfrac{1}{5} \mu^f\kappa^{-1} \norm[0]{e_z^{h,n+1}}_{\Omega^b}^2
      +  C\mu^f\kappa^{-1} h^{2k}\norm[0]{z^{n+1}}_{H^{k}(\Omega^b)}^2
      + Cc_0^2\mu^f\kappa^{-1}\Delta t \norm[0]{\partial_{tt}p^p}_{L^2(t_n, t_{n+1}; L^2(\Omega^b))}^2.      
    \end{split}
  \end{equation}
  Likewise, and using that $0< \alpha \le 1$, we find for $I_{3b}^n$,
  \begin{equation}
    \label{eq:I3b-estimate}
    \begin{split}
      |I_{3b}^n| 
      &\le \lambda^{-1}\norm[0]{\alpha (d_tp^{p,n+1}-\partial_t p^{p,n+1}) - (d_t p^{b,n+1}-\partial_t p^{b,n+1})}_{\Omega^b}
      \norm[0]{e_{p^p}^{h,n+1}}_{\Omega^b}
      \\
      &\le \tfrac{1}{5} \mu^f\kappa^{-1} \norm[0]{e_z^{h,n+1}}_{\Omega^b}^2
      +  C\mu^f\kappa^{-1} h^{2k}\norm[0]{z^{n+1}}_{H^{k}(\Omega^b)}^2
      \\
      &\quad + C\lambda^{-2}\mu^f \kappa^{-1}\Delta t (\norm[0]{\partial_{tt}p^p}_{L^2(t_n, t_{n+1}; L^2(\Omega^b))}^2
      +\norm[0]{\partial_{tt}p^b}_{L^2(t_n, t_{n+1}; L^2(\Omega^b))}^2).
    \end{split}
  \end{equation}
  For $I_{3c}^n$, by \cref{eq:trace-estimate} and an argument similar
  to the estimate \cref{eq:I2-estimate},
  \begin{equation}
    \label{eq:I3c-estimate}
    |I_{3c}^n|
    \le \tfrac{1}{5} \mu^f\kappa^{-1} \norm[0]{e_z^{h,n+1}}_{\Omega^b}^2
    +  C\mu^f\kappa^{-1} h^{2k}\norm[0]{z^{n+1}}_{H^{k}(\Omega^b)}^2
    + C \mu^f\kappa^{-1}\Delta t \norm[0]{\partial_{tt}u^b}_{L^2(t_n, t_{n+1}; H^1(\Omega^b))}^2.      
  \end{equation}
  For $I_4^n$, using the Cauchy--Schwarz inequality, Young's
  inequality, and the approximation properties of the BDM interpolant,
  we find
  \begin{equation}
    \label{eq:I4-estimate}
    |I_4^n| \le \tfrac{1}{5}\mu^f\kappa^{-1}\norm[0]{e_z^{h,n+1}}_{\Omega^b}^2
    + C \mu^f \kappa^{-1} h^{2k} \norm[0]{z^{n+1}}_{H^{k}(\Omega^b)}^2.
  \end{equation}
  Combining the estimates in
  \cref{eq:I1a-estimate,eq:I1b-estimate,eq:I1c-estimate,eq:I1d-estimate,eq:I1e-estimate,eq:I2-estimate,eq:I3a-estimate,eq:I3b-estimate,eq:I3c-estimate,eq:I4-estimate}
  with \cref{eq:sum-error-eqs}, choosing $\psi = c_{ae}^f\mu^f/24$,
  summing for $n=0$ to $n=m-1$, multiplying both sides of the
  inequality by $\Delta t$, and taking into account the vanishing
  initial data, we find:
  \begin{equation}
    \label{eq:summed-error-estimate}
    \begin{split}
      & \tfrac{1}{2}\norm[0]{e_{u^f}^{h,m}}_{\Omega^f}^2 
      + \tfrac{\Delta t}{8}c_{ae}^f\mu^f \sum_{i=1}^m\tnorm{\boldsymbol{e}_{u^f}^{h,i}}_{v,f}^2
      + \tfrac{1}{2} a_h^b(\boldsymbol{e}_{u^b}^{h,m}, \boldsymbol{e}_{u^b}^{h,m})
      + \tfrac{\Delta t}{2}\gamma \mu^f \kappa^{-1/2} \sum_{i=1}^m\norm[0]{ (\bar{e}_{u^f}^{h,i} - d_t\bar{e}_{u^b}^{h,i})^t }_{\Gamma_I}^2
      \\
      & \hspace{2em}      
      + \tfrac{\lambda^{-1}}{2} \norm[0]{\alpha e_{p^p}^{h,m} - e_{p^b}^{h,m}}_{\Omega^b}^2
      + \tfrac{c_0}{2} \norm[0]{e_{p^p}^{h,m}}_{\Omega^b}^2
      + \tfrac{\Delta t}{5} \mu^f\kappa^{-1}\sum_{i=1}^m\norm[0]{e_z^{h,i}}_{\Omega^b}^2
      \\
      &\quad
      \le  C(\mu^f)^{-1}h^{2k} \norm[0]{\partial_t u^f}_{L^2(0, t_{m}; H^{k}(\Omega^f) )}^2
      +  C(\mu^f)^{-1}(\Delta t)^2 \norm[0]{\partial_{tt} u^f}_{L^2(0, t_{m}; L^2(\Omega^f))}^2 
      \\
      &\qquad + C (\mu^f)^{-1} (\Delta t)^2 \sum_{i=1}^m\norm[0]{\partial_t u^f}_{L^2(t_{i-1}, t_{i}; H^1(\Omega^f))}^2 \norm[0]{u^{f,i}}_{1,\Omega^f}^2
      \\
      &\qquad + C (\mu^f)^{-1} \Delta t h^{2k} \sum_{i=1}^m((\mu^f)^2+\norm[0]{u^{f,i-1}}_{1,\Omega^f}^2+\norm[0]{u^{f,i-1}}_{k+1,\Omega^f}^2) \norm[0]{u^{f,i}}_{k+1,\Omega^f}^2 + \Delta t \sum_{i=1}^{m} I_{1f}^{i-1}
      \\
      &\qquad + C (\mu^f)^{-1} \Delta t\sum_{i=1}^m\norm[0]{e_{u^f}^{h,i-1}}_{\Omega^f}^2 \norm[0]{u^{f,i}}_{W_3^1(\Omega^f)}^2
      + C \mu^f \kappa^{-1/2}(\Delta t)^2 \norm[0]{\partial_{tt} u^b}_{L^2(0, t_{m}; H^1(\Omega^b))}^2
      \\
      &\qquad +  C\mu^f\kappa^{-1} \Delta t h^{2k}\sum_{i=1}^m\norm[0]{z^{i}}_{H^{k}(\Omega^b)}^2
      + C c_0^2 \mu^f\kappa^{-1} (\Delta t)^2 \norm[0]{\partial_{tt} p^p}_{L^2(0, t_{m}; L^2(\Omega^b))}^2 
      \\
      &\qquad + C \lambda^{-2} \mu^f \kappa^{-1}(\Delta t)^2 \big(\norm[0]{\partial_{tt}p^p}_{L^2(0, t_{m}; L^2(\Omega^b))}^2+\norm[0]{\partial_{tt}p^b}_{L^2(0, t_{m}; L^2(\Omega^b))}^2\big)
      \\
      &\qquad + C \mu^f\kappa^{-1} (\Delta t)^2\norm[0]{\partial_{tt}u^b}_{L^2(0, t_{m}; H^1(\Omega^b))}^2.      
    \end{split}
  \end{equation}
  Let us now consider the term $I_{1f}$. Using summation-by-parts and
  that the initial data vanishes,
  \begin{equation*}
    \Delta t \sum_{i=1}^m I_{1f}^{i-1}
    =a_h^b(\boldsymbol{e}_{u^b}^{I,m},\boldsymbol{e}_{u^b}^{h,m})
    + \sum_{i=2}^m a_h^b(\boldsymbol{e}_{u^b}^{I,i-1}-\boldsymbol{e}_{u^b}^{I,i},\boldsymbol{e}_{u^b}^{h,i-1}).
  \end{equation*}
  Note also that, by the Cauchy--Schwarz and Young's inequalities,
  \cref{eq:ah-continuity-j}, and approximation properties of the BDM
  interpolant,
  \begin{equation*}
    \begin{split}
      a_h^b(\boldsymbol{e}_{u^b}^{I,i-1}- \boldsymbol{e}_{u^b}^{I,i},\boldsymbol{e}_{u^b}^{h,i-1})
      &\le
      a_h^b(\boldsymbol{e}_{u^b}^{I,i-1}- \boldsymbol{e}_{u^b}^{I,i},\boldsymbol{e}_{u^b}^{I,i-1}- \boldsymbol{e}_{u^b}^{I,i})^{1/2}
      a_h^b(\boldsymbol{e}_{u^b}^{h,i-1},\boldsymbol{e}_{u^b}^{h,i-1})^{1/2}
      \\
      &\le
      (\Delta t)^{-1} a_h^b(\boldsymbol{e}_{u^b}^{I,i-1}- \boldsymbol{e}_{u^b}^{I,i},\boldsymbol{e}_{u^b}^{I,i-1}- \boldsymbol{e}_{u^b}^{I,i})
      +\tfrac{\Delta t}{4}a_h^b(\boldsymbol{e}_{u^b}^{h,i-1},\boldsymbol{e}_{u^b}^{h,i-1})
      \\
      &\le
      (\Delta t)^{-1} c_{ac}^b{\mu^b}\tnorm{\boldsymbol{e}_{u^b}^{I,i-1}- \boldsymbol{e}_{u^b}^{I,i}}_{v',j}^2
      +\tfrac{\Delta t}{4}a_h^b(\boldsymbol{e}_{u^b}^{h,i-1},\boldsymbol{e}_{u^b}^{h,i-1})
      \\
      &\le
      (\Delta t)^{-1}C{\mu^b}h^{2k}\norm[0]{u^{b,i-1}-u^{b,i}}_{k+1,\Omega^b}^2
      +\tfrac{\Delta t}{4}a_h^b(\boldsymbol{e}_{u^b}^{h,i-1},\boldsymbol{e}_{u^b}^{h,i-1})
      \\
      &\le
      C{\mu^b} h^{2k}\norm[0]{\partial_tu^b}_{L^2(t_{i-1},t_i;H^{k+1}(\Omega^b))}^2
      +\tfrac{\Delta t}{4}a_h^b(\boldsymbol{e}_{u^b}^{h,i-1},\boldsymbol{e}_{u^b}^{h,i-1}),
    \end{split}
  \end{equation*}
  and, similarly,
  \begin{equation*}
    a_h^b(\boldsymbol{e}_{u^b}^{I,m},\boldsymbol{e}_{u^b}^{h,m})
    \le C {\mu^b} h^{2k} \norm[0]{u^{b,m}}_{k+1,\Omega^b}^2
    + \tfrac{1}{4} a_h^b(\boldsymbol{e}_{u^b}^{h,m},\boldsymbol{e}_{u^b}^{h,m}).    
  \end{equation*}
  The above inequalities together with \cref{eq:summed-error-estimate}
  result in:
  \begin{align*}
    &\tfrac{1}{2}\norm[0]{e_{u^f}^{h,m}}_{\Omega^f}^2 
      +\tfrac{\Delta t}{8}c_{ae}^f\mu^f \sum_{i=1}^m\tnorm{\boldsymbol{e}_{u^f}^{h,i}}_{v,f}^2
      + \tfrac{1}{4} a_h^b(\boldsymbol{e}_{u^b}^{h,m}, \boldsymbol{e}_{u^b}^{h,m})
      + \tfrac{\Delta t}{2}\gamma \mu^f \kappa^{-1/2} \sum_{i=1}^m\norm[0]{ (\bar{e}_{u^f}^{h,i} - d_t\bar{e}_{u^b}^{h,i})^t }_{\Gamma_I}^2
    \\
    & \hspace{2em}
      + \tfrac{\lambda^{-1}}{2} \norm[0]{\alpha e_{p^p}^{h,m} - e_{p^b}^{h,m}}_{\Omega^b}^2
      + \tfrac{c_0}{2} \norm[0]{e_{p^p}^{h,m}}_{\Omega^b}^2
      + \tfrac{\Delta t}{5} \mu^f\kappa^{-1}\sum_{i=1}^m\norm[0]{e_z^{h,i}}_{\Omega^b}^2
    \\
    &\quad \le C (\mu^f)^{-1}h^{2k} \norm[0]{\partial_t u^f}_{L^2(0, t_{m}; H^{k}(\Omega^f))}^2
      +  C (\mu^f)^{-1}(\Delta t)^2 \norm[0]{\partial_{tt} u^f}_{L^2(0, t_{m}; L^2(\Omega^f))}^2 
    \\
    &\qquad + C T (\mu^f)^{-1} (\Delta t)^2 \norm[0]{\partial_t u^f}_{L^2(0, t_{m}; H^1(\Omega^f))}^2 \norm[0]{u^{f}}_{\ell^\infty(0,t_m;H^1(\Omega^f))}^2
    \\
    &\qquad + C T (\mu^f)^{-1} h^{2k} ((\mu^f)^2+\norm[0]{u^{f}}_{\ell^\infty(0,t_m;H^1(\Omega^f))}^2+\norm[0]{u^{f}}_{\ell^\infty(0,t_m;H^{k+1}(\Omega^f))}^2) \norm[0]{u^{f}}_{\ell^\infty(0,t_m;H^{k+1}(\Omega^f))}^2 
    \\
    &\qquad + C \mu^b h^{2k} \norm[0]{\partial_t u^b}_{L^2(0,t_m;H^{k+1}(\Omega^b))}^2 + C \mu^b h^{2k} \norm[0]{u^{b}}_{\ell^\infty(0,t_m;H^{k+1}(\Omega^b))}^2 
    \\
    &\qquad + C (\mu^f)^{-1} \Delta t\sum_{i=1}^m\norm[0]{e_{u^f}^{h,i-1}}_{\Omega^f}^2 \norm[0]{u^{f,i}}_{W_3^1(\Omega^f)}^2
      + C \mu^f \kappa^{-1/2}(\Delta t)^2 \norm[0]{\partial_{tt} u^b}_{L^2(0, t_{m}; H^1(\Omega^b))}^2
    \\
    &\qquad      
      + \tfrac{\Delta t}{4} \sum_{i=2}^{m}a_h^b(\boldsymbol{e}_{u^b}^{h,i-1},\boldsymbol{e}_{u^b}^{h,i-1})
      +  C\mu^f\kappa^{-1} T h^{2k}\norm[0]{z}_{\ell^{\infty}(0,t_m;H^{k}(\Omega^b))}^2      
    \\
    &\qquad + C c_0^2 \mu^f\kappa^{-1} (\Delta t)^2 \norm[0]{\partial_{tt} p^p}_{L^2(0, t_{m}; L^2(\Omega^b))}^2
      + C \mu^f\kappa^{-1} (\Delta t)^2\norm[0]{\partial_{tt}u^b}_{L^2(0, t_{m}; H^1(\Omega^b))}^2
    \\
    &\qquad
      + C \lambda^{-2} \mu^f \kappa^{-1}(\Delta t)^2 (\norm[0]{\partial_{tt}p^p}_{L^2(0, t_{m}; L^2(\Omega^b))}^2+\norm[0]{\partial_{tt}p^b}_{L^2(0, t_{m}; L^2(\Omega^b))}^2).
  \end{align*}
  \Cref{eq:ehufubppzestimate} follows by a discrete Gr\"{o}nwall
  inequality (see, e.g., \cite[Lemma~28]{Layton:book}).

  We next prove \cref{eq:ehpbestimate}. For this, let us first note
  that by \cref{eq:inf-sup-bj} there exists a
  $\tilde{\boldsymbol{v}}_h^b \in \tilde{\boldsymbol{V}}_h^b$, with
  $\tilde{\boldsymbol{V}}_h^b$ defined in \cref{eq:Vh-tilde}, such
  that
  $b_h^b(\tilde{\boldsymbol{v}}_h^b,
  \boldsymbol{e}_{p^b}^{h,n+1})=\tnorm{\boldsymbol{e}_{p^b}^{h,n+1}}_{q,b}^2$
  and
  $\tnorm{\tilde{\boldsymbol{v}}_{h}^{b}}_{v,b} \le
  C\tnorm{\boldsymbol{e}_{p^b}^{h,n+1}}_{q,b}$. Take
  $\boldsymbol{v}_h^f = \boldsymbol{0}$ and
  $\boldsymbol{v}_h^b = \tilde{\boldsymbol{v}}_h^b$ in
  \cref{eq:reduced-error-eq1} which then reduces to
  $a_h^b(\boldsymbol{e}_{u^b}^{h,n+1}, \tilde{\boldsymbol{v}}_h^b)
  +\tnorm{\boldsymbol{e}_{p^b}^{h,n+1}}_{q,b}^2 =
  a_h^b(\boldsymbol{e}_{u^b}^{I,n+1}, \tilde{\boldsymbol{v}}_h^b)$. By
  \cref{eq:ah-continuity-j} we obtain:
  \begin{equation*}
    \tnorm{\boldsymbol{e}_{p^b}^{h,n+1}}_{q,b}^2
    \le |a_h^b(\boldsymbol{e}_{u^b}^{h,n+1}, \tilde{\boldsymbol{v}}_h^b)|
    + |a_h^b(\boldsymbol{e}_{u^b}^{I,n+1}, \tilde{\boldsymbol{v}}_h^b)|
    \le C \mu^b\del[1]{\tnorm{\boldsymbol{e}_{u^b}^{h,n+1}}_{v,b} 
      + \tnorm{\boldsymbol{e}_{u^b}^{I,n+1}}_{v',b}}\tnorm{\boldsymbol{e}_{p^b}^{h,n+1}}_{q,b},
  \end{equation*}
  so that \cref{eq:ehpbestimate} follows by using
  \cref{eq:ah-coercive-j} and approximation properties of the
  interpolant.
\end{proof}

An immediate consequence of Theorem \ref{thm:errorEstimate}, the
triangle inequality, and approximation properties of the different
interpolants is the following corollary.

\begin{corollary}
  Suppose that all the assumptions in Theorem \ref{thm:errorEstimate}
  hold. Then:
  \begin{equation*}
    \begin{split}
      &\norm[0]{u^{f,m} - u_h^{f,m}}_{\Omega^f}^2
      + \mu^b\tnorm{\boldsymbol{u}^{b,m}-\boldsymbol{u}_h^{b,m}}_{v,b}^2
      + \lambda^{-1}\norm[0]{\alpha(p^{p,m}-p_h^{p,m})-(p^{b,m}-p_h^{b,m})}_{\Omega^b}^2
      \\
      &+ c_0 \norm[0]{p^{p,m}-p_h^{p,m}}_{\Omega^b}^2 + \tnorm{\boldsymbol{p}^{b,m}-\boldsymbol{p}_h^{b,m}}_{q,b}^2
      \\
      &+ \Delta t\sum_{i=1}^m \sbr[2]{\mu^f\tnorm{\boldsymbol{u}^{f,i} - \boldsymbol{u}_h^{f,i}}_{v,f}^2
        + \gamma\mu^f\kappa^{-1/2} \norm[0]{\alpha (\bar{u}^{f,i} - \bar{u}_h^{f,i}) - d_t (\bar{u}^{b,i}- \bar{u}_h^{b,i})}_{\Gamma_I}^2
        + \mu^f\kappa^{-1} \norm[0]{z^i - z_h^{i}}_{\Omega^b}^2 }
      \\
      &\le C_G' \sbr[1]{h^{2k} + (\Delta t)^2},      
    \end{split}
  \end{equation*}
  with $C_G'$ depending on $C_G$ in Theorem~\ref{thm:errorEstimate},
  the norms of the exact solutions, the constants of the approximation
  properties of the interpolation operators $\Pi_V^j$,
  $\bar{\Pi}_V^j$, $\Pi_Q^j$, $\bar{\Pi}_Q^j$, and the different model
  parameters.
\end{corollary}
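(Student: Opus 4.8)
The plan is to derive the corollary in the routine way: combine the error splittings \cref{eq:errorsplitting}, the triangle inequality, the bounds on the discrete error parts from Theorem~\ref{thm:errorEstimate}, and the approximation properties of $\Pi_V^j$, $\bar\Pi_V^j$, $\Pi_Q^j$, $\bar\Pi_Q^j$ together with the regularity \cref{eq:regularity-assumptions} for the interpolation error parts. For each quantity on the left-hand side I would insert the decomposition, e.g.\ $u^{f,m}-u_h^{f,m}=e_{u^f}^{I,m}-e_{u^f}^{h,m}$, $z^{i}-z_h^{i}=e_{z}^{I,i}-e_{z}^{h,i}$, $p^{b,m}-p_h^{b,m}=e_{p^b}^{I,m}-e_{p^b}^{h,m}$, $\alpha(p^{p,m}-p_h^{p,m})-(p^{b,m}-p_h^{b,m})=(\alpha e_{p^p}^{I,m}-e_{p^b}^{I,m})-(\alpha e_{p^p}^{h,m}-e_{p^b}^{h,m})$, $\bar u^{f,i}-\bar u_h^{f,i}=\bar e_{u^f}^{I,i}-\bar e_{u^f}^{h,i}$, and $d_t(\bar u^{b,i}-\bar u_h^{b,i})=d_t\bar e_{u^b}^{I,i}-d_t\bar e_{u^b}^{h,i}$, apply $(a-b)^2\le 2a^2+2b^2$ (and its $\Delta t\sum_{i=1}^m$ analogue for the accumulated terms), bound every $e^h$-contribution by the right-hand sides of \cref{eq:ehufubppzestimate} and \cref{eq:ehpbestimate}, and bound every $e^I$-contribution by interpolation. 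For the $a_h^b$- and $\tnorm{\cdot}_{q,b}$-type terms I would use the coercivity/continuity of $a_h^b$ (\cref{eq:ah-coercive-j}, \cref{eq:ah-continuity-j}) and the norm equivalence $\tnorm{\cdot}_{v,b}\le\tnorm{\cdot}_{v',b}\le c_e\tnorm{\cdot}_{v,b}$, so that the term $a_h^b(\boldsymbol e_{u^b}^{h,m},\boldsymbol e_{u^b}^{h,m})$ appearing in \cref{eq:ehpbestimate} is itself already $O(h^{2k}+(\Delta t)^2)$ via \cref{eq:ehufubppzestimate}.

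The interpolation error parts are then all $O(h^{2k})$: the static ones $\norm[0]{e_{u^f}^{I,m}}_{\Omega^f}$, $\mu^b\tnorm{\boldsymbol e_{u^b}^{I,m}}_{v,b}^2$, $c_0\norm[0]{e_{p^p}^{I,m}}_{\Omega^b}^2$, $\tnorm{\boldsymbol e_{p^b}^{I,m}}_{q,b}^2$, and the accumulated ones $\Delta t\sum_i\mu^f\tnorm{\boldsymbol e_{u^f}^{I,i}}_{v,f}^2$ and $\Delta t\sum_i\mu^f\kappa^{-1}\norm[0]{e_z^{I,i}}_{\Omega^b}^2$, follow from the quoted approximation estimates for the BDM interpolants and $L^2$-projections together with the $\ell^\infty(J;\cdot)$ norms in \cref{eq:regularity-assumptions}; for $\tnorm{\boldsymbol e_{p^b}^{I,m}}_{q,b}$ and $c_0\norm[0]{e_{p^p}^{I,m}}_{\Omega^b}$ one uses the spatial regularity of $p^b$ and $p^p$ inherited from $p^b=\alpha p^p-\lambda\nabla\cdot u^b$, Darcy's law $\mu^f\kappa^{-1}z+\nabla p^p=0$, and \cref{eq:regularity-assumptions} (in particular $\alpha e_{p^p}^{I,m}-e_{p^b}^{I,m}=\lambda(\nabla\cdot u^{b,m}-\Pi_Q^b\nabla\cdot u^{b,m})$ is $O(h^k)$ directly from $u^b\in H^{k+1}$). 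The only term needing an extra step is the accumulated interface contribution $\Delta t\sum_{i=1}^m\norm[0]{d_t\bar e_{u^b}^{I,i}}_{\Gamma_I}^2$: since $\bar\Pi_V^b$ is time-independent, $d_t\bar e_{u^b}^{I,i}=(\Delta t)^{-1}\int_{t_{i-1}}^{t_i}\partial_t(\bar u^b-\bar\Pi_V^b\bar u^b)\dif t$, so Cauchy--Schwarz in time gives $\Delta t\sum_{i=1}^m\norm[0]{d_t\bar e_{u^b}^{I,i}}_{\Gamma_I}^2\le\norm[0]{\overline{\partial_t u^b}-\bar\Pi_V^b\overline{\partial_t u^b}}_{L^2(J;L^2(\Gamma_I))}^2$, which is of higher order in $h$ by a trace/scaling estimate and $\partial_t u^b\in L^2(J;[H^{k+1}(\Omega^b)]^d)$ (implied by $u^b\in W^{2,1}(J;[H^{k+1}(\Omega^b)]^d)$ on the bounded interval $J$); similarly $\bar e_{u^f}^{I,i}$ on $\Gamma_I$ is higher order.

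Collecting the $O(h^{2k})$ and $O((\Delta t)^2)$ contributions and absorbing the constant $C_G$ of Theorem~\ref{thm:errorEstimate} into $C_G'$ completes the argument. I do not expect any genuine obstacle here; the only point requiring care is bookkeeping --- keeping track of which norm of the exact solution each interpolation error is measured against, so that all of them are indeed controlled by the norms listed in \cref{eq:regularity-assumptions} and their PDE-implied consequences.
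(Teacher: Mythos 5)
Your proposal is correct and follows exactly the route the paper intends: the paper gives no written proof, stating the corollary as an immediate consequence of Theorem \ref{thm:errorEstimate}, the triangle inequality, and the interpolation estimates, which is precisely the decomposition-plus-approximation argument you carry out (including the correct handling of $\tnorm{\boldsymbol{e}_{p^b}^{h,m}}_{q,b}$ via \cref{eq:ehpbestimate} and of the accumulated $d_t\bar{e}_{u^b}^{I,i}$ interface term). No gaps.
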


\section{Numerical example}
\label{sec:numexample}

In this final section, we present a numerical example to confirm our
analysis. For this, we consider the time-dependent manufactured
solution of \cite[Section 6.2]{Cesmelioglu:2023a}. We consider the
domain $\Omega := (0,1)^2$ with $\Omega^f := (0,1) \times (0.5, 1)$
and $\Omega^b := (0,1) \times (0, 0.5)$. The boundaries of the domain
are defined as:
\begin{align*}
  \Gamma_D^f &:= \cbr[0]{x \in \Gamma^f\ :\ x_1=0 \text{ or } x_2 = 1},
  &
    \Gamma_N^f &:= \cbr[0]{x \in \Gamma^f\ :\ x_1 = 1},
  \\
  \Gamma_P^b = \Gamma_D^b &:= \cbr[0]{x \in \Gamma^b\ : \ x_0=0 \text{ or } x_2 = 0},
  &
    \Gamma_F^b = \Gamma_N^b &:= \cbr[0]{ x \in \Gamma^b\ :\ x_1 = 1}.
\end{align*}
We consider the Navier--Stokes/Biot problem
\cref{eq:navierstokes,eq:biot} with boundary conditions
\begin{align*}
  u^f &= U^f & & \text{on } \Gamma^{f}_D \times J,
  &
  u^b &= U^b & & \text{on } \Gamma^{b}_D \times J,
  &
  p^p &= P^p & & \text{on } \Gamma^b_P \times J,
  \\
  \sigma^f n &= S^{f} & & \text{on } \Gamma^f_{N} \times J,
  &
  \sigma^b n &= S^{b} & & \text{on } \Gamma^b_{N} \times J,
  &    
  z \cdot n &= Z^d & & \text{on } \Gamma^b_F \times J,
\end{align*}
and interface conditions
\begin{align*}
  u^f \cdot n &= (\partial_tu^b+z) \cdot n + M^u & & \text{on } \Gamma_I \times J,
  \\
  \sigma^fn &= \sigma^bn + M^s & & \text{on } \Gamma_I \times J,
  \\
  (\sigma^f n)\cdot n &= p^p + M^p & & \text{on } \Gamma_I \times J,
  \\    
  -2\mu^f\del[0]{\varepsilon(u^f)n}^t &= \gamma\mu^f \kappa^{-1/2} (u^f - \partial_tu^b)^t + M^e
                            & & \text{on } \Gamma_I \times J.
\end{align*}
The functions $M^u$, $M^s$, $M^p$, and $M^e$ in the aforementioned
modified interface conditions, as well as the boundary data, $U^f$,
$U^b$, $P^p$, $S^f$, $S^b$, and $Z^d$, body forces $f^f$ and $f^b$,
source/sink term $g^b$, and the initial conditions are chosen such
that the exact solution is given by
\begin{align*}
  u^f &=\begin{bmatrix}
          \pi x_1\cos(\pi (x_1 x_2-t)) + 1 \\
          -\pi x_2 \cos(\pi (x_1 x_2-t)) + 2x_1
        \end{bmatrix},
      &
  u^b &=\begin{bmatrix}
          \sin(10\pi t)\cos(4(x_1-t))\cos(3x_2) \\
          \sin(10\pi t)\sin(5x_1)\cos(2(x_2-t))
        \end{bmatrix},
  \\
  p^f &= \sin(3x_1)\cos(4(x_2-t)), 
      &
  p^p &= \sin(3(x_1x_2-t)).
\end{align*}
The model parameters are chosen as follows: $\mu^f = 10^{-2}$,
$\mu^b = 10^{-3}$, $\alpha=0.2$, $\lambda = 10^2$, $\kappa = 10^{-2}$,
$c_0 = 10^{-2}$, and $\gamma = 0.3$, while the HDG penalty parameters
are chosen as $\beta^f = \beta^b = 8k^2$, where $k$ is the polynomial
degree. We consider the time interval $J=[0, 0.01]$ and implement the
HDG method in the Netgen/NGSolve finite element library
\cite{Schoberl:1997,Schoberl:2014}.

We first consider the spatial rates of convergence. For this, we
compute the solution using $k=1$ and $k=2$ and list the errors
measured in the $L^2$-norm and rates of convergence of the unknowns in
$\Omega^f$ in \cref{tab:ratesconv-omegaf} and in $\Omega^b$ in
\cref{tab:ratesconv-omegaf}. We use a time step of
$\Delta t = \tfrac{1}{10} h^{k+2}$. From both tables we observe that
$\norm[0]{u_h^f - u^f}_{\Omega^f}$,
$\norm[0]{u_h^b - u^b}_{\Omega^b}$, and $\norm[0]{z_h - z}_{\Omega^b}$
are $\mathcal{O}(h^{k+1})$ while $\norm[0]{p_h^f - p^f}_{\Omega^f}$,
$\norm[0]{p_h^b - p^b}_{\Omega^b}$, and
$\norm[0]{p_h^p - p^p}_{\Omega^b}$ are $\mathcal{O}(h^{k})$.

We next consider the temporal rates of convergence. The errors,
measured in the $L^2$-norm, and temporal rates of convergence for the
unknowns in $\Omega^f$ are given in \cref{tab:ratesconvdt-omegaf} and
in $\Omega^b$ are given in \cref{tab:ratesconvdt-omegab}. To compute
these results we choose $k=4$ and compute on the solution on a mesh
consisting of 37548 cells. We observe that the error for all unknowns
is $\mathcal{O}(\Delta t)$.

\begin{table}
  \small
  \centering {
    \begin{tabular}{cccccc}
      \hline
      Cells
      & $\norm[0]{u_h^f - u^f}_{\Omega^f}$ & $r$ & $\norm[0]{p_h^f - p^f}_{\Omega^f}$
      & $r$ & $\norm[0]{\nabla \cdot u_h^f}_{\Omega^f}$ \\
      \hline
      \multicolumn{6}{c}{$k=1$} \\
      8 & 2.4e-01 &   - & 6.0e-01 &   - & 6.3e-16 \\ 
      28 & 5.8e-02 & 2.0 & 1.2e-01 & 2.4 & 5.3e-16 \\ 
      152 & 1.5e-02 & 2.0 & 4.7e-02 & 1.3 & 9.6e-16 \\ 
      576 & 3.2e-03 & 2.2 & 2.2e-02 & 1.1 & 8.5e-16 \\ 
      2348 & 6.8e-04 & 2.2 & 1.1e-02 & 1.1 & 8.4e-16 \\       
      \hline
      \multicolumn{6}{c}{$k=2$} \\
      8 & 5.8e-02 &   - & 1.1e-01 &   - & 1.1e-15 \\ 
      28 & 9.6e-03 & 2.6 & 2.6e-02 & 2.0 & 1.2e-15 \\ 
      152 & 1.5e-03 & 2.7 & 4.7e-03 & 2.5 & 1.5e-15 \\ 
      576 & 1.6e-04 & 3.2 & 9.5e-04 & 2.3 & 1.3e-15 \\ 
      2348 & 1.5e-05 & 3.4 & 2.2e-04 & 2.1 & 1.2e-15 \\       
      \hline
    \end{tabular}
  } \caption{Errors and spatial rates of convergence $r$ for the
    solution in $\Omega^f$ for the test case described in
    \cref{sec:numexample}.}
  \label{tab:ratesconv-omegaf}
\end{table}

\begin{table}
  \small
  \centering {
    \begin{tabular}{ccccccccc}
      \hline
      Cells
      & $\norm[0]{u_h^b - u^b}_{\Omega^b}$ & $r$ & $\norm[0]{p^b_h - p^b}_{\Omega^b}$
      & $r$ & $\norm[0]{z_h-z}_{\Omega^b}$ & $r$ & $\norm[0]{p_h^p - p^p}_{\Omega^b}$ & $r$ \\
      \hline
      \multicolumn{9}{c}{$k=1$} \\
      8 & 6.9e-02 &   - & 3.2e+01 &   - & 1.6e-01 &   - & 1.4e-01 &   -  \\ 
      28 & 1.8e-02 & 2.0 & 1.6e+01 & 0.9 & 4.0e-02 & 2.0 & 6.8e-02 & 1.0 \\ 
      152 & 2.8e-03 & 2.6 & 6.6e+00 & 1.3 & 6.6e-03 & 2.6 & 2.9e-02 & 1.2 \\ 
      576 & 7.2e-04 & 2.0 & 3.4e+00 & 1.0 & 1.9e-03 & 1.8 & 1.5e-02 & 1.0 \\ 
      2348 & 1.7e-04 & 2.1 & 1.7e+00 & 1.0 & 5.2e-04 & 1.9 & 7.3e-03 & 1.0 \\      
      \hline
      \multicolumn{9}{c}{$k=2$} \\
      8 & 9.7e-03 &   - & 6.8e+00 &   - & 4.5e-02 &   - & 2.0e-02 &   -  \\ 
      28 & 2.2e-03 & 2.1 & 2.3e+00 & 1.5 & 4.9e-03 & 3.2 & 5.3e-03 & 1.9 \\ 
      152 & 1.9e-04 & 3.6 & 4.4e-01 & 2.4 & 4.7e-04 & 3.4 & 1.2e-03 & 2.2 \\ 
      576 & 2.2e-05 & 3.1 & 1.0e-01 & 2.1 & 1.0e-04 & 2.2 & 2.8e-04 & 2.1 \\ 
      2348 & 2.3e-06 & 3.2 & 2.6e-02 & 2.0 & 1.7e-05 & 2.7 & 6.6e-05 & 2.1 \\       
      \hline
    \end{tabular}
  }
  \caption{Errors and spatial rates of convergence $r$ for the
    solution in $\Omega^b$ for the test case described in
    \cref{sec:numexample}.}
  \label{tab:ratesconv-omegab}
\end{table}

\begin{table}
  \small
  \centering {
    \begin{tabular}{cccccc}
      \hline
      $\Delta t$
      & $\norm[0]{u_h^f - u^f}_{\Omega^f}$ & $r$ & $\norm[0]{p_h^f - p^f}_{\Omega^f}$
      & $r$ & $\norm[0]{\nabla \cdot u_h^f}_{\Omega^f}$ \\
      \hline
      $T/8$ & 6.0e-02 &   - & 8.2e-02 &   - & 6.1e-12 \\ 
      $T/16$ & 3.6e-02 & 0.7 & 5.3e-02 & 0.6 & 5.9e-12 \\ 
      $T/32$ & 2.1e-02 & 0.8 & 3.2e-02 & 0.7 & 5.7e-12 \\ 
      $T/64$ & 1.1e-02 & 0.9 & 1.8e-02 & 0.8 & 4.3e-12 \\ 
      $T/128$ & 6.0e-03 & 0.9 & 9.8e-03 & 0.9 & 3.5e-12 \\      
      \hline
    \end{tabular}
  } \caption{Errors and temporal rates of convergence $r$ for the
    solution in $\Omega^f$ for the test case described in
    \cref{sec:numexample}.}
  \label{tab:ratesconvdt-omegaf}
\end{table}

\begin{table}
  \small
  \centering {
    \begin{tabular}{ccccccccc}
      \hline
      $\Delta t$
      & $\norm[0]{u_h^b - u^b}_{\Omega^b}$ & $r$ & $\norm[0]{p^b_h - p^b}_{\Omega^b}$
      & $r$ & $\norm[0]{z_h-z}_{\Omega^b}$ & $r$ & $\norm[0]{p_h^p - p^p}_{\Omega^b}$ & $r$ \\
      \hline
      $T/8$ & 5.5e-04 &   - & 9.5e-06 &   - & 1.1e-02 &   - & 1.4e-03 &   - \\ 
      $T/16$ & 3.0e-04 & 0.9 & 4.5e-06 & 1.1 & 5.7e-03 & 1.0 & 7.1e-04 & 1.0 \\ 
      $T/32$ & 1.6e-04 & 0.9 & 2.2e-06 & 1.0 & 2.9e-03 & 1.0 & 3.6e-04 & 1.0 \\ 
      $T/64$ & 8.7e-05 & 0.9 & 1.1e-06 & 1.0 & 1.4e-03 & 1.0 & 1.8e-04 & 1.0 \\ 
      $T/128$ & 4.6e-05 & 0.9 & 5.6e-07 & 1.0 & 7.2e-04 & 1.0 & 8.9e-05 & 1.0 \\ 
      \hline
    \end{tabular}
  }
  \caption{Errors and temporal rates of convergence $r$ for the
    solution in $\Omega^b$ for the test case described in
    \cref{sec:numexample}.}
  \label{tab:ratesconvdt-omegab}
\end{table}

\section{Conclusions}
\label{sec:conclusions}

In this paper we introduced and analyzed an HDG discretization for the
time-dependent Navier--Stokes equations coupled to the Biot
equations. Appealing properties of the discretization include that the
velocities and displacement are divergence-conforming, that the
compressibility equations are satisfied pointwise on the elements, and
that mass is conserved pointwise for the semi-discrete problem when
source and sink terms are ignored. We proved stability and
well-posedness under a small data assumption and presented an a priori
error analysis of the method.

\subsubsection*{Acknowledgements}

Aycil Cesmelioglu and Jeonghun J. Lee acknowledge support from the
National Science Foundation through grant numbers DMS-2110782 and
DMS-2110781. Sander Rhebergen acknowledges support from the Natural
Sciences and Engineering Research Council of Canada through the
Discovery Grant program (RGPIN-2023-03237).

\bibliographystyle{abbrvnat}
\bibliography{references}
\end{document}